\documentclass[11pt, a4paper]{article}
\usepackage{amsmath,amsthm,amssymb,amsfonts,esint}
\usepackage{a4wide}
\textheight 24cm
\topmargin -1.5cm

\pdfoutput=1

\usepackage[utf8]{inputenc}
\usepackage[T1]{fontenc}
\usepackage{lmodern}
\usepackage[english]{babel}
\usepackage[colorlinks=true, pdfstartview=FitV, linkcolor=blue, citecolor=blue, urlcolor=blue,pagebackref=false]{hyperref}
\usepackage{mathtools}
\usepackage{enumerate}

\usepackage[backend=bibtex,firstinits,maxnames=4,style=alphabetic,isbn=false, date=year, doi=false, eprint= true, url= false]{biblatex}

\usepackage{xcolor}

\usepackage{microtype}

\newtheorem{thm}{Theorem}[section]
\newtheorem{prop}[thm]{Proposition}
\newtheorem{lem}[thm]{Lemma}
\newtheorem{cor}[thm]{Corollary}
\theoremstyle{remark}
\newtheorem{rem}[thm]{Remark}
\theoremstyle{definition}

\renewcommand{\leq}{\leqslant}
\renewcommand{\geq}{\geqslant}

\renewcommand{\subset}{\subseteq}

\newcommand{\N}{\mathbb{N}}

\newcommand{\1}{\mathbf{1}}
\newcommand{\R}{\mathbb{R}}
\newcommand{\IR}{\mathbb{R}}

\newcommand{\dd}{\, \mathrm{d}}

\newcommand{\eff}{\mathrm{eff}}

\newcommand{\Hdot}{\dot{H}}
\newcommand{\dmin}{d_{\min}}

\DeclareMathOperator{\Sym}{Sym}
\DeclareMathOperator{\curl}{curl}

\DeclareMathOperator{\Id}{Id}

\DeclareMathOperator*{\dv}{div}

\DeclareMathOperator*{\supp}{supp}

\numberwithin{equation}{section}
\mathtoolsset{showonlyrefs}

\setcounter{secnumdepth}{2}

\usepackage{authblk}
\newcommand*{\email}[1]{%
    \normalsize\href{mailto:#1}{#1}\par
    }
    
\title{Convergence of the Method of Reflections for Particle Suspensions in Stokes Flows}
\author{Richard M. H\"ofer \thanks{Present affiliation: Insitut de Mathématiques de Jussieu - Paris Rive Gauche, Université de Paris, 8 Place Aurélie Nemours, 75013 Paris, France \email{hoefer@imj-prg.fr}}}
    
\affil{Institute for Applied Mathematics, University of Bonn, Endenicher Allee 60, 53115 Bonn, Germany}
\affil{Institut de Mathématiques, Université de Bordeaux, 351 Cours de la Libération, 33400 Talence, France}

\bibliography{Hoefer.bib}

\begin{document}

\maketitle

\begin{abstract}
We study the convergence of the method of reflections for the Stokes equations in domains perforated by countably many spherical particles with boundary conditions typical for the suspension of rigid particles. We prove that a relaxed version of the method is always convergent in $\dot H^1$ under a mild separation condition on the particles. Moreover, we prove optimal convergence rates of the method in $\dot W^{1,q}$, $1 < q < \infty$ 
and in $L^\infty$ in terms of the particle volume fraction under a stronger separation condition of the particles.

\par\vskip\baselineskip\noindent
\textbf{Keywords: method of reflections, Stokes equations, suspensions, perforated domain}
\end{abstract}

\section{Introduction}

We consider a perforation of $\R^3$ by disjoint spherical particles with radii $R_i$ located at positions $X_i$, $i \in I$, where $I$ is a finite or countable index set. 
We write $B_i := B_{R_i}(X_i)$ and 
\begin{align}
	K := \bigcup_i \overline{B_i} 
\end{align}
for the set occupied by the particles.


%

We are interested in solving the Stokes problem
\begin{align} 
	- \mu \Delta v + \nabla p = f \quad \text{in } \R^3 \setminus K, \label{eq:Stokes.perforated} \\
	e v = 0 \quad \text{in } K \label{eq:const} \\
	\int_{\partial B_i} \sigma[v,p] n = \int_{B_i} f \quad \text{for all } i \in I. \label{eq:forcefree} \\
	\int_{\partial B_i} (x - X_i) \times \sigma[v,p] n  = \int_{B_i} (x- X_i) \times f \quad \text{for all } i \in I. \label{eq:torquefree} \\
\end{align}
Here,  $e v = \frac 1 2 (\nabla v + \nabla v^T)$ is the symmetric gradient. By a standard rigidity result, \eqref{eq:const} means that the fluid velocity $v$ (extended to the interior of the particles) is a rigid body motion inside of each particle.
This condition is complemented by \eqref{eq:forcefree} and \eqref{eq:torquefree} where
$\sigma[u] = 2 \mu e u - p \Id$ is the fluid stress. These two constraints prescribe the total force and torque acting on each particle.

We remark, that all the results presented in this paper continue to hold for the system corresponding to \eqref{eq:Stokes.perforated} - \eqref{eq:torquefree}
when the Stokes equations are replaced by the Poisson equation.

\medskip

The method of reflection yields a formal series expansion of the solution to certain (linear) PDEs like \eqref{eq:Stokes.perforated} - \eqref{eq:torquefree}
in domains like $\R^3 \setminus K$, where the boundary of the domain is decomposed into several (possibly infinitely many) disjoint boundaries. 
This series representation is of the form
\begin{align} \label{eq:MOR.series}
	v = u - \sum_{i_1 \in I} Q_{i_1} u + \sum_{i_2 \in I} \sum_{\substack {i_1 \in I \\ i_1 \neq i_2}} Q_{i_2} Q_{i_1} u - 
	\sum_{i_3 \in I} \sum_{\substack {i_2 \in I \\ i_2 \neq i_3}} \sum_{\substack {i_1 \in I \\ i_1 \neq i_2}} Q_{i_3} Q_{i_2} Q_{i_1} u + \dots,
\end{align}
where $u$ is the solution to the PDE in the whole space and the operators $Q_i$ are certain solution operators which take into account only the $i$-th part of the boundary. 
%
%
This method has been systematically studied by Vel\'azquez and the author
in \cite{HoferVelazquez18} for the Poisson and Stokes equations with Dirichlet boundary conditions. 
We showed in \cite{HoferVelazquez18} how the method of reflections for Dirichlet boundary can be used to re-obtain classical homogenization results
for the Poisson and Stokes equations in perforated domains. 
The method has been used in \cite{FigariOrlandiTeta85,Rubinstein1986} to determine the fluctuations in related problems,
where the particles are randomly distributed.
We refer to \cite{CiaramellaGanderHalpernSalomon18} and \cite{HoferVelazquez18}
for a more general introduction, historical remarks and more references on the method of reflections and its applications.


\subsection{Applications of the method in sedimentation and the effective viscosity problems}
\label{sec:applications}

In several recent works, the method of reflections has been applied to system \eqref{eq:Stokes.perforated} - \eqref{eq:torquefree}
which is a typical model for the study of particle suspensions.
In particular, the method of reflections has been used for effective viscosity problem and the problem of (inertialess) particle sedimentation.

The effective viscosity problem aims at describing the phenomenon that the presence of the particles give rise to an 
effective viscosity $\mu_{\eff}$ in the homogenization limit of many small particles.
It is believed that an expansion of $\mu_\eff$ in terms of the particle volume fraction $\phi$ of the form 
\begin{align} \label{eq:effective.viscosity.expansion}
	\mu_{eff} = \mu + \frac{5}{2} \phi + \mu_2 \phi^2 + ...
\end{align}
holds, where $\frac{5}{2} \phi$ is known as the Einstein correction \cite{Ein06}.

On the other hand, the convergence of the method of reflections is expected at least in powers of
 \begin{align} \label{def:phi_0}
	\phi_0 := \frac{R_{\max}^3}{d_{\min}^3}.
\end{align} 
where $R_{\max}$ is the maximal particle radius and $d_{\min}$ the minimal particle distance.
More precisely, 
one expects the method of reflection to yield an approximation of the solution up to an error of order $o(\phi_0^k)$ by keeping the first $k+1$ terms in \eqref{eq:MOR.series}, i.e. by cutting the expansion after the term involving $k$ sums.
Under the (stringent) assumption $\phi \sim \phi_0$, this allows, at least formally, to determine the 
coefficients in the expansion for the effective viscosity in \eqref{eq:effective.viscosity.expansion}
by comparison with the expansion \eqref{eq:MOR.series}.

\medskip

If the fluid equations \eqref{eq:Stokes.perforated} - \eqref{eq:torquefree} are complemented by the equations of motion for the particles,
the problem becomes dynamical. A particularly physically relevant case is the dynamics of particle sedimentation, where the driving force is gravity acting on the particles. We emphasize that this case is included into the above system of equations \eqref{eq:Stokes.perforated} - \eqref{eq:torquefree} through a choice of $f$
such that $f = 0$ in $\R^3 \setminus K$ and $\int_{B_i} f = F_i$ where $F_i$ is the gravity acting on particle $B_i$.

In typical dynamical suspension problems such as the sedimentation of inertialess particles, the particle positions change according to $\dot X_i = v(X_i)$.
The method of reflections appears to be very helpful in this context since it provides the necessary pointwise information on the solution in contrast to classical energy methods.

\medskip

We give a brief summary about the results regarding the effective viscosity and sedimentation 
where the method of reflections has been used.

In \cite{NiethammerSchubert19}, Niethammer and Schubert used the method of reflections to rigorously establish the Einstein law, i.e. the expansion in \eqref{eq:effective.viscosity.expansion} up to order $o(\phi)$, for a suspension of spherical particles. Later, Hillairet and Wu \cite{HillairetWu19} proved a corresponding result for more general shapes of particles.
Moreover, Gerard-Varet, Hillairet and Mecherbet \cite{Gerard-VaretHillairet19, Gerard-VaretMecherbet20} were able to get insights on the second order correction in $\phi$ to the effective viscosity. For more results on the effective viscosity problem (not relying on the method of reflections), we refer the reader to \cite{HainesMazzucato12,Gerard-Varet19,DuerinckxGloria19, Gerard-VaretHoefer20}.

We also point out that in the recent works \cite{DuerinckxGloria20b,GerardVaret20}, the authors succeeded to go beyond the first order correction without the stringent assumption that the volume fraction scales like $\phi \sim \phi_0$ with $\phi_0$ as in \eqref{def:phi_0}. In this case, even if the method of reflections is converging, the $k$-th term in the series \eqref{eq:MOR.series} does in general not anymore correspond to the $k$-th order correction of the effective viscosity $\mu_k$ in \eqref{eq:effective.viscosity.expansion} for $k \geq 2$. Instead, \cite{DuerinckxGloria20b,GerardVaret20} rely on a cluster expansion.

\medskip
Regarding sedimentation problems, 
in \cite{JabinOtto04}, Jabin and Otto used a variant of the method of reflections to identify the dilute regime of inertialess particle sedimentation  where
the dynamics is close to the dynamics of isolated particles. Later, the author studied the corresponding (strong) interacting regime in \cite{Hofer18MeanField}, and proved convergence of the microscopic dynamics
for inertialess particle sedimentation
to the transport-Stokes system 
\begin{equation} \label{eq:Transport.Stokes}
\begin{aligned}
	\partial_t \rho + (u + \gamma^{-1} g) \cdot \nabla \rho = 0 \\
	-\Delta u + \nabla p = \rho g, \qquad \dv u = 0,
\end{aligned}
\end{equation}
where  $\rho(t,x)$ is the particle density, $g \in \R^3$ is the constant gravity and $\gamma$ is a parameter accounting for the particle interaction strength.
Using a related reflection method, quantitative convergence to \eqref{eq:Transport.Stokes} in terms of Wasserstein distances
has been proved by Mecherbet in \cite{Mecherbet18}. In \cite{Mecherbet19}, Mecherbet used similar methods for studying the sedimentation of close pairs of particles.

In the resent paper \cite{HoeferSchubert21}, the author and Schubert refined the aforementioned results taking into account the increase of the viscosity to first order in the particle volume fraction.

\medskip

We also mention the paper \cite{HillairetLacaveWu19}, where the method of reflections is applied do derive a  homogenization result for the Euler equations in a two-dimension perforated domain. Indeed, in this setting, the stream function satisfies a scalar form of problem \eqref{eq:Stokes.perforated}--\eqref{eq:torquefree}.

\subsection{Expected convergence rates and results of previous works} \label{sec:heuristics}

In \cite{HoferVelazquez18}, the method of reflections for the Dirichlet problem has been formulated in terms of orthogonal projections on the homogeneous Sobolev space 
$\dot H^1(\R^3)$. More precisely, using that the operators $Q_i$ in \eqref{eq:MOR.series} are orthogonal projections, the $k$-th order approximation of the method of reflections can be written as
\begin{align}
	v_k = (1- \sum_{i \in I} Q_i)^k u.
\end{align}
The analysis of the convergence thus boils done to the study of the operator $L = \sum_{i \in I} Q_i$.
For the Poisson equations with Dirichlet boundary conditions, the function $Q_i u$ is approximately given by
\begin{align}
	Q_i u(x) = \frac{(u)_i R_i}{|x - X_i|}, \qquad  \text{for } x \in \R^3 \setminus B_i
\end{align}
where $(u)_i$ is the average of $u$ over $\partial B_i$. A similar formula holds for the Stokes equations.
In particular, the operator $L$ is in general not well-defined if the particles are distributed everywhere in $\R^3$.
In \cite{HoferVelazquez18}, convergence of the method is therefore first shown for the screened Poisson equation (i.e. the operator $-\Delta + \lambda$, $\lambda > 0$),
if the particles are sufficiently well separated and the capacity density of the particles is sufficiently small. 
For the Poisson and Stokes equation, the same result holds under the additional condition that the particles only occupy a bounded set in $\R^3$.
In \cite{HoferVelazquez18}, a relaxed version of the method of reflections is then studied which also converges when the particles are distributed everywhere and the capacity density is only bounded.
This relaxed version reads 
\begin{align} \label{eq:v_k.tilde}
	\tilde v_k = (1-\sum_{i \in I} \gamma_i Q_i)^k u,
\end{align}
where the parameters $\gamma_i$ have to be chosen sufficiently small (e.g. $\gamma_i = \gamma e^{-|X_i|}$, $\gamma$ small).
As explained in \cite{HoferVelazquez18} (see in particular Section 2.5 therein), one can view this as resummation or renormalization of the original series \eqref{eq:MOR.series}: 
expanding again the right-hand side of \eqref{eq:v_k.tilde} yields a partial sum corresponding to \eqref{eq:MOR.series}  with coefficients that depend on $\gamma_i$ and $k$ but each coefficient converges to $1$ as $k \to \infty$. 

A corresponding formulation in terms of orthogonal reflections has been used in \cite{Hofer18MeanField} and \cite{NiethammerSchubert19} for the system 
\eqref{eq:Stokes.perforated} - \eqref{eq:torquefree}.\footnote{To be precise, in  \cite{Hofer18MeanField}, a system is studied where particle rotations are neglected but this does not matter a lot for the analysis of the method.} Due to the different boundary conditions, the function $Q_i u$ decays much faster, 
namely roughly like
\begin{align}
	|(Q_i u)|(x) \sim \frac{R_i^3 |eu(X_i)|}{|x - X_i|^2} \qquad \text{as } |x| \to \infty.
\end{align}

Moreover, the gradient $\nabla Q_i u$, which is the relevant quantity for analyzing the convergence of the method of reflections for the system
\eqref{eq:Stokes.perforated} - \eqref{eq:torquefree},  decays like $|x - X_i|^{-3}$.
As a consequence, the convergence properties of the method of reflections for system \eqref{eq:Stokes.perforated} - \eqref{eq:torquefree} are much better than for the problem with Dirichlet boundary conditions. In particular, one expects the convergence to be related to the volume fraction $\phi$
of the particles instead of the capacity density. Indeed, formally, one obtains
\begin{align}
	\sum_{i \in I} \nabla Q_i u(x) \sim  \sum \frac{R_i^3 eu(X_i)}{|x - X_i|^3} \sim \int_{\R^3} \phi(y) 
	\frac{e u(y)}{|x-y|^3} \dd y,
\end{align}
where $\phi(y)$ denotes the local particle volume fraction. Hence, if one assumes that the local particle volume fraction is bounded by some constant $\phi_0$, one expects the first order correction in the method of reflections to scale like $\phi_0$. Similarly, the $k$-th order correction given by the $(k+1)$-st term  on the right-hand side of \eqref{eq:MOR.series} should scale like $\phi_0^k$.

This argument is completely formal. In particular, in order to make it rigorous, one needs to deal with the fact that the decay
$|x - X_i|^{-3}$ is critical for summability.
Therefore, in \cite{Hofer18MeanField} and \cite{NiethammerSchubert19} finite clouds of particles are considered which avoids the summability issue at infinity.
Moreover, since the decay is also critical at zero, instead of the expected convergence for small particle volume fraction $\phi_0$, the critical exponent caused a smallness assumption for
$\phi_0 \log N$, where $N$ is the total number of particles.

In \cite{HillairetWu19,Gerard-VaretHillairet19}, a related method of reflections is studied. 
The problem of the critical exponent is overcome by approximating the sum $\sum \nabla Q_i u$ by an integral which takes the form of a Calderon-Zygmund operator. The method of reflections studied in 
\cite{HillairetWu19,Gerard-VaretHillairet19} replaces the operators $Q_i$ by more explicit operators. This yields a modified method of reflections which produces a sequence $\bar v_k$ which is not convergent to $v$. However, it is shown in \cite{HillairetWu19,Gerard-VaretHillairet19},
that the error $v - \bar v_k$ is sufficiently small in terms of the particle volume fraction for the purpose of analyzing the first and second 
order correction for the effective viscosity of a suspension.

\subsection{Outline of the main results}

In the present paper, we follow the approach in \cite{HoferVelazquez18} for the analysis of the method through studying the operator $L = \sum Q_i$.
In contrast to the corresponding system with Dirichlet boundary conditions studied in \cite{HoferVelazquez18}, we show that $L$ is always a bounded self-adjoint operator on $\dot H^1(\R^3)$ under the mere condition that the particles are non-overlapping, with a bound on $\|L\|$ which does not depend on the particle configuration (see Theorem \ref{th:L.bounded}).
In particular, $\tilde v_k := (1-\gamma L)^k u$ converges to $v$ in $\dot H^1(\R^3)$ for $\gamma < 2 \|L\|$ (see \ref{cor:L^2.no.rate}.
Moreover, we show that $L$ has a spectral gap if there exists $\theta > 1$ such that the balls $B_{\theta R_i}(X_i)$ are disjoint, leading to a quantitative convergence result, Theorem \ref{th:conv.relaxed}.

This convergence results bears the advantage that it holds for a very general class of particle configurations.
However, in many applications, stronger estimates on the rate of convergence are needed, in particular in terms of the particle volume fraction.
Such a result can only be expected to hold under additional separation conditions on the particles, since clusters of close particles slow down the convergence of the method.
We prove quantitative convergence results for the method of reflections in terms of $\phi_0$ as in \eqref{def:phi_0}. 
We prove that for $\phi_0$ sufficiently small, the method of reflections converges in $\dot W^{1,q}(\R^3)$ with rate $\phi_0$ for any
$1 < q < \infty$. 

The quantity $\phi_0$ is an upper bound for the particle volume fraction. The (quite restrictive) assumption that $\phi_0$ scales like the particle volume fraction has been imposed in many related papers (see for instance \cite{DesvillettesGolseRicci08,HillairetMoussaSueur17,Hofer18MeanField,NiethammerSchubert19,Gerard-VaretHillairet19}). 
We refer to \cite{Hillairet2018,Mecherbet18} for results on the convergence under different assumptions.
It remains an important open problem to weaken this assumption in order to treat more general (and in particular random) configurations of particles using 
the method of reflections or an appropriately modified version of it.
As indicated in Section \ref{sec:applications}, without assuming $\phi_0 \sim \phi$, it seems then appropriate to reorganize the series \eqref{eq:MOR.series} in such a way that the $k$-th term contains all the terms involving exactly $k$ particles (so  for example the second term contains not only terms $Q_{i_1}Q_{i_2}$ but also $Q_{i_1}Q_{i_2}Q_{i_1}$, $Q_{i_1}Q_{i_2}Q_{i_1}Q_{i_2}$ etc.).

he present work relies on a combination of the techniques from \cite{HoferVelazquez18,Hofer18MeanField,NiethammerSchubert19,HillairetWu19,Gerard-VaretHillairet19}.
In particular, since the use of estimates for Calderon-Zygmund operators makes it possible to remove the logarithmic divergence in \cite{Hofer18MeanField,NiethammerSchubert19}, we are able to treat the case of infinitely many particles. 
The Calderon-Zygmund estimates are used in a slightly different way than in \cite{HillairetWu19,Gerard-VaretHillairet19}, 
which yields better convergence rates in $\phi_0$. More precisely, we show that the $(k+1)$-st term on the right-hand side of
\eqref{eq:MOR.series} in general scales like $\phi_0^{k-1}$ in the homogeneous Sobolev space $\dot W^{1,q}$ for any $1 < q < \infty$.
As we will discuss after stating this result, Theorem \ref{th:L^q.rate.phi}, these convergence rates seem to be optimal.

For the $L^p$ theory, a crucial ingredient is the uniform a priori estimate  $\|v\|_{\dot W^{1,q}} \leq C \|f\|_{\dot W^{-1,q}}$ for the solution to \eqref{eq:Stokes.perforated}--\eqref{eq:torquefree}  for small $\phi_0$, which is nontrivial for $q \neq 2$. 
Complementary to such perturbative regularity results, large scale stochastic regularity results for \eqref{eq:Stokes.perforated}--\eqref{eq:torquefree} have been established in \cite{DuerinckxGloria20a,DuerinckxGloria21}

To our knowledge, this is the first time that the method of reflections is studied in more general homogeneous Sobolev spaces.
By Sobolev embedding, this allows to obtain estimates in $C^{0,\alpha}(\R^3)$.
Moreover, for bounded clouds of particles, we prove convergence results in $L^\infty(\R^3)$ where we gain a factor in $\phi_0$ compared to $\dot W^{1,q}$ yielding best possible rates discussed in the previous subsection (see Corollary \ref{cor.average}).

These results are expected to be very useful for dynamical problems such as particle sedimentation. In particular, we hope that our results allow to obtain rigorous results for the effective dynamics of particle suspensions at small volume fractions $\phi$ instead of $\phi \to 0$ as in \cite{Hofer18MeanField,Mecherbet18,Mecherbet19, HoeferSchubert21}. 

We point out that for the sake of simplicity, we only treat spherical particles in this paper. This has the benefit that some expressions become explicit, but all the results are expected to hold in for non-spherical particles, too. We refer to \cite{HillairetLacaveWu19} for a treatment of the method of reflections for non-spherical particles.

\medskip

The rest of the paper is organized as follows. In Section \ref{sec:mainResults}, we give precise definition of the method of reflections and the statements of the main results.
In Section \ref{sec:relaxed}, we prove the main results for the relaxed method of reflections.
In Section \ref{sec:L^2}, we prove the main result for the unrelaxed method, Theorem \ref{th:L^q.rate.phi}, in the Hilbert space $\dot H^1(\R^3)$.
Relying on this result, we then generalize this result to $\dot W^{1,q}$ for any $1 < q < \infty$ in Section \ref{sec:L^p}.
Finally, in Section \ref{sec:proofCorollary}, we prove the convergence result in $L^\infty$, Corollary \ref{cor.average}.

\section{Setting and main results}

\label{sec:mainResults}
\subsection{Weak formulation of the problem}

Without loss of generality, we set the viscosity $\mu = 1$ in the rest of the paper.

In order to write problem \eqref{eq:Stokes.perforated} - \eqref{eq:torquefree} in a weak  formulation,
we consider homogeneous Sobolev spaces of divergence free functions which are rigid body motions inside the particles.
More precisely, we denote by $\dot W^{1,q}(\R^3)$, $1 \leq q < \infty$ the homogeneous Sobolev space defined as the closure of $C_c^\infty(\R^3)$ with respect to the semi-norm 
$\| \nabla \cdot \|_{L^q(\R^3)}$. By the Sobolev embedding for $q < 3$, we can identify elements in $\dot W^{1,q}(\R^3)$ with functions in $L^{q^\ast}$, $q^\ast$ being the Sobolev conjugate $q^\ast = (3 q)/(3-q)$. On the other hand, if $q \geq 3$, elements in 
$\dot W^{1,q}(\R^3)$ are equivalence classes of functions which differ by a constant almost everywhere.
We also define 
\begin{align}
	\dot W^{1,q}_\sigma(\R^3) = \{ w \in \dot W^{1,q}(\R^3) : \dv w = 0\}.
\end{align}

Then, we introduce the space 
\begin{align} \label{eq:defW}
	W_q := \left\{ w \in \dot W^{1,q}_\sigma(\R^3) : ew= 0 ~ \text{in} ~ K \right\}.
\end{align}
Then, if $f \in \dot W^{-1,q}(\R^3) := (\dot W^{1,q'}(\R^3))^\ast$ ($q'$ denoting the H\"older conjugate of $q$), the weak formulation of \eqref{eq:Stokes.perforated} - \eqref{eq:torquefree} reads
\begin{equation} \label{eq:weak.from}
\begin{aligned}
	\int_{\R^3} \nabla v : \nabla \varphi &= \langle f, \varphi \rangle 	\qquad \text{for all } \varphi \in W_{q'},  \\
	\dv v & = 0 \quad \text{in } \R^3,\\ 
	e v &= 0 \quad \text{in } K.
\end{aligned}
\end{equation}
This means that we seek a solution $v \in W_q$ which satisfies the first equation in \eqref{eq:weak.from}.

\subsection{The method of reflections in terms of orthogonal projections}

For $q = 2$ we denote for simplicity $W := W_2$.
In this case, the existence and uniqueness of a weak solution to \eqref{eq:weak.from} for any $f \in \dot H^{-1}(\R^3)$ is immediate.
Moreover, we can characterize this solution as follows.
Let $P$ denote the orthogonal projection in $\dot H^1_\sigma(\R^3)$ to $W$.
Then, the solution to \eqref{eq:weak.from} is given by $v = P u$, where $u \in \dot H^1(\R^3)$ is the solution to\footnote{Note that here and in the following we denote by the same letter $p$ different pressure terms. Since the pressure itself is not of interest for our analysis, there will arise no confusion from this slight abuse of notation.}
\begin{align} \label{eq:defU}
	 \Delta u + \nabla p = f, \quad \dv u = 0 \quad \text{in } \R^3.
\end{align}

For the setup of the method of reflections (following \cite{HoferVelazquez18,Hofer18MeanField,NiethammerSchubert19}), we define 
\[
	W_i = \left\{ w \in \Hdot^1_\sigma(\IR^3) : e w= 0 ~ \text{in} ~ B_i \right\}.
\]
Clearly, 
\begin{align}
	W = \bigcap_i W_i.
\end{align}

Let $P_i$ be the orthogonal projection from $\Hdot^1_\sigma(\IR^3) $ to $W_i$ and $Q_i = 1 - P_i$.
We observe (see \cite[Lemma 4.1]{NiethammerSchubert19})
\begin{equation}
	\label{eq:characterizationWPerp}
	\begin{aligned}
	W_i^\perp &= \biggl\{ w \in \Hdot^1_\sigma(\IR^3) \colon -\Delta w + \nabla p = 0  ~ \text{in} ~  \IR^3 \backslash \overline{B_i}, \\
	& \qquad  \int_{\partial B_i}  \sigma[w,p] n =   \int_{\partial B_i}  (x - X_i) \times (\sigma[w,p] n) = 0 \biggr\}.
	\end{aligned}
\end{equation}

The method of reflections can now be stated as follows. As a zero order approximation for the solution $v$ to \eqref{eq:weak.from}, one takes the solution $u$ to \eqref{eq:defU}.
Recall from \eqref{eq:weak.from} that $v$ is a rigid body motion inside of the particles, i.e. $ev = 0$ in $\cup_i B_i$.
Thus, the idea behind the method of reflections is to add functions $w_i$ to $u$ in such a way that $u + w_i$ is a rigid body motion inside of the particle $i$, and still satisfies
the Stokes equations.  Thus, $w_i = -Q_i u$, and we define
\[
	v_1 = (1 - \sum_i Q_i) u.
\]
Clearly, since generally $e w_i \neq 0$ in $B_j$ for $j \neq i$, the function $v_1$ is still not a rigid body motion inside the particles.
Therefore, higher order approximations for $v$ are obtained by repeating this process.
\begin{equation}
	\label{eq:defV_k}
	 v_k  = \left(1 - \sum_i Q_i \right)^k u.
\end{equation}
This defines the $k-th$ order approximation of $v$ through the method of reflections. It is not difficult to see that $v_k$ 
coincides with the series expansion \eqref{eq:MOR.series} cut after the $(k+1)$-st term (see \cite{HoferVelazquez18}).

The elements of $W_i^\perp$ can be interpreted as force dipoles leading to a decay like $|x-X_i|^{-3}$. As discussed in Section \ref{sec:heuristics},
it is this decay that determines the convergence rate of the method of reflections.

\subsection{Main results for the relaxed method}

We want to study the convergence $v_k \to v$. Clearly, convergence cannot be expected in general, since the sequence $v_k$ might be unbounded.
 We therefore need to study the operator 
$L$ defined by
\begin{align} \label{eq:defL}
 L := \sum_i Q_i.
\end{align}
Since the sum is infinite, it is a priori not even clear whether $L$ is a bounded operator on $\dot H^1_\sigma(\R^3)$. 
Indeed, in \cite{HoferVelazquez18}, we studied the corresponding operator when boundary condition in \eqref{eq:Stokes.perforated} - \eqref{eq:torquefree} are replaced by Dirichlet boundary condition. In that case, $L$ generally does not define a bounded operator if the particles are distributed everywhere in the whole space. Indeed, in that case, the corresponding functions $Q_i u$ decay like $|x - X_i|^{-1}$ which is not fast enough to make the sum $\sum Q_i u$ converge.
On the other hand, for the boundary conditions studied here, the functions $Q_i u$ decay faster (like $|x - X_i|^{-2}$)
since they are ``dipole potentials'' meaning
\begin{align}
	\int_{\partial B_i} \sigma[Q_i u, p] n = 0,
\end{align}
because $Q_i u \in W_i^\perp$.

We prove that $L$ is a bounded operator on the mere condition that the particles are pairwise disjoint.

\begin{thm} \label{th:L.bounded}
	Assume that $B_i \cap B_j = \emptyset$ for all $i \neq j \in I$.
	Then, the operator $L = \sum_i Q_i$ is a well defined, bounded, nonnegative and self-adjoint operator on $\dot H^1_{\sigma}(\R^3)$ with $\|L\| \leq C$ for a universal constant $C$.
\end{thm}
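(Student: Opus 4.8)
The goal is to show that $L=\sum_i Q_i$ is bounded, nonnegative, self-adjoint on $\dot H^1_\sigma(\R^3)$, with an operator norm bound independent of the particle configuration. Nonnegativity and (formal) self-adjointness are immediate from the fact that each $Q_i=1-P_i$ is an orthogonal projection, hence nonnegative and self-adjoint: for $w\in\dot H^1_\sigma$ one has $\langle L w,w\rangle=\sum_i \langle Q_i w,w\rangle=\sum_i \|Q_i w\|^2\ge 0$. So the whole issue is the \emph{uniform bound} $\|L\|\le C$, which by the above identity and polarization is equivalent to the Bessel-type inequality
\[
	\sum_i \|Q_i w\|_{\dot H^1}^2 \le C \|w\|_{\dot H^1}^2 \qquad \text{for all } w\in \dot H^1_\sigma(\R^3),
\]
and it suffices to prove this for $w$ in a dense class, e.g. $w\in C_c^\infty$ solenoidal (or $w=u$ coming from a nice $f$). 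The key point is that $Q_i w\in W_i^\perp$ is the Stokes \emph{force- and torque-free} correction, i.e. a decaying dipole-type field determined by the affine part of $w$ on $B_i$; so $\|Q_i w\|_{\dot H^1}$ should be controlled by $\|\nabla w\|_{L^2(B_i^\ast)}$ on a slightly enlarged ball, and since the enlarged balls have bounded overlap when the $B_i$ are disjoint, summing gives the claim.

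\textbf{Step 1: a single-particle estimate.} Fix $i$ and write $w_i:=Q_i w$. By the variational characterization of the projection $P_i$ onto $W_i$, $w_i$ is the $\dot H^1$-orthogonal projection of $w$ onto $W_i^\perp$; equivalently, on $B_i$ the field $w-w_i=P_i w$ is a rigid motion, so $e(w_i)=e(w)$ in $B_i$, while $w_i$ solves the homogeneous Stokes equations outside $\overline{B_i}$ with vanishing net force and torque on $\partial B_i$ (cf.\ \eqref{eq:characterizationWPerp}). One can bound $\|w_i\|_{\dot H^1(\R^3)}$ by testing: since $w_i$ minimizes $\|\nabla\cdot\|_{L^2}^2$ over all $\dot H^1_\sigma$ fields agreeing with a given rigid motion "defect" inside $B_i$ and with $w_i-w$ being a rigid motion on $B_i$, one has
\[
	\|\nabla w_i\|_{L^2(\R^3)} \le \|\nabla \psi\|_{L^2(\R^3)}
\]
for \emph{any} solenoidal competitor $\psi$ with $\psi - w$ a rigid motion on $B_i$. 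Choosing $\psi$ to be a suitable truncation: take $\psi$ to equal $w$ minus its rigid-motion best-approximation on $2B_i$, cut off smoothly outside $2B_i$ with a Bogovskii correction to restore $\dv\psi=0$. A Poincaré/Korn-type inequality on the annulus $2B_i\setminus B_i$ then yields $\|\nabla\psi\|_{L^2(\R^3)}\le C\|\nabla w\|_{L^2(2B_i)}$ with $C$ scale-invariant (by scaling we may assume $R_i=1$). Hence
\[
	\|Q_i w\|_{\dot H^1}^2 \le C \|\nabla w\|_{L^2(2B_i)}^2.
\]

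\textbf{Step 2: summation.} Since the $B_i=B_{R_i}(X_i)$ are pairwise disjoint, the doubled balls $2B_i$ have bounded overlap --- each point of $\R^3$ lies in at most a universal number $N_0$ of the $2B_i$ (a standard covering fact: disjoint balls, doubled, overlap boundedly). Therefore
\[
	\sum_i \|Q_i w\|_{\dot H^1}^2 \le C\sum_i \|\nabla w\|_{L^2(2B_i)}^2 \le C N_0 \|\nabla w\|_{L^2(\R^3)}^2 = C N_0 \|w\|_{\dot H^1}^2,
\]
which is the desired Bessel inequality and shows $\sum_i Q_i w$ converges in $\dot H^1_\sigma$ with $\|L\|\le CN_0$, a universal constant. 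Self-adjointness and nonnegativity pass to the (strong) limit since partial sums are nonnegative self-adjoint and converge strongly.

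\textbf{Main obstacle.} The crux is Step~1: constructing the competitor $\psi$ with a genuinely scale-invariant constant. This requires a Bogovskii operator on the annulus $2B_i\setminus \overline{B_i}$ with constant depending only on the (fixed) ratio of radii, together with a Korn/Poincaré inequality to replace $w$ by $w$ minus its optimal rigid motion. One must also be careful that subtracting a rigid motion on $B_i$ is legitimate inside the solenoidal homogeneous space (rigid motions are divergence-free, so $\psi$ stays solenoidal after the Bogovskii fix), and that the competitor indeed lies in $\dot H^1_\sigma(\R^3)$, i.e. has globally finite Dirichlet energy --- which holds because $\psi$ is compactly perturbed from $w$ only on $2B_i$. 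Everything else (bounded overlap, the abstract projection facts) is routine.
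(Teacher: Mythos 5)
Your overall skeleton coincides with the paper's: nonnegativity and self-adjointness follow since each $Q_i$ is an orthogonal projection, and the uniform bound reduces to the Bessel-type inequality $\sum_i \|Q_i w\|^2_{\dot H^1(\R^3)} \leq C\|w\|^2_{\dot H^1(\R^3)}$; also your variational characterization of $Q_i w$ as the minimal-norm solenoidal field with prescribed symmetric gradient in $B_i$ is legitimate (it is the same device the paper uses in the proof of Lemma \ref{lem:spectral.gap}). The genuine gap is in your Step~2: the ``standard covering fact'' you invoke is false under the hypothesis of the theorem. Only pairwise disjointness of the $B_i$ is assumed, with no control on the ratios of the radii, and disjoint balls of very different sizes can have doubles with unbounded overlap. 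For instance, take $R_k = 10^k$ and $|X_k| = 1.9\cdot 10^k$ (directions arbitrary): then $B_k \subset \{\,y : 0.9\cdot 10^k \leq |y| \leq 2.9\cdot 10^k\,\}$, so the $B_k$ are pairwise disjoint, while a fixed neighborhood of the origin is contained in $B_{2R_k}(X_k)$ for \emph{every} $k$. For such configurations $\sum_i \|\nabla w\|^2_{L^2(2B_i)}$ is not controlled by $\|w\|^2_{\dot H^1(\R^3)}$ (it is even infinite whenever $\nabla w \not\equiv 0$ near the origin), so your argument only yields $\|L\| \leq C N_0$ with $N_0$ the overlap number of the \emph{doubled} balls, which can be $+\infty$ here; this is strictly weaker than the theorem.

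The paper avoids this entirely by using a sharper single-particle estimate in place of your Step~1, namely Lemma \ref{lem:Q.scalar.product} (quoted from \cite[Corollary 4.3]{NiethammerSchubert19}): for $w \in W_i^\perp$ one has $\|w\|_{\dot H^1(\R^3)} \leq C\|e w\|_{L^2(B_i)}$ with the ball $B_i$ itself on the right-hand side and a universal constant. This exploits the force- and torque-free structure of $W_i^\perp$ (the correction is a decaying dipole determined by $e w$ on $B_i$), rather than a crude cutoff competitor at scale $2R_i$, which inevitably pays an enlarged ball. With $\|Q_i w\|_{\dot H^1(\R^3)} \leq C\|e w\|_{L^2(B_i)}$, the summation step needs only the disjointness of the $B_i$: $(Lw,w)_{\dot H^1(\R^3)} = \sum_i \|Q_i w\|^2_{\dot H^1(\R^3)} \leq C \sum_i \|e w\|^2_{L^2(B_i)} \leq C\|w\|^2_{\dot H^1(\R^3)}$, with no covering argument at all. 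So to repair your proof you must upgrade Step~1 to an estimate localized on $B_i$ itself (e.g.\ by proving the quoted single-ball lemma), or else add a hypothesis such as comparable radii or disjointness of the enlarged balls, which the theorem does not assume. A minor further point: deducing norm convergence of $\sum_i Q_i w$ from the Bessel bound needs a small extra argument (e.g.\ $\bigl\|\sum_{i \in G} Q_i w\bigr\|^2 \leq \|L_G\| \sum_{i\in G} \|Q_i w\|^2$ for finite tail sets $G$); the paper also glosses this, deferring to \cite{HoferVelazquez18}.
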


\begin{rem}
	Following the proof of this theorem in Section \ref{sec:relaxed}, it is possible to relax the non-overlapping condition.
	More precisely, the theorem remains true, if there exists an $N_0 \in \N$ such that $|\{ i \in I \colon x \in B_i\}| \leq N_0$ for all $x \in \R^3$.
	The norm of $\|L\|$ is then bounded by $C N_0^{1/2}$.
\end{rem}

\medskip

Even though $L$ is a well-defined bounded operator on $\dot H^1_\sigma(\R^3)$, the sequence $v_k$ might be unbounded.
Analogously to \cite{HoferVelazquez18} we therefore also study the convergence of the so called relaxed method of reflections which consists in replacing the sequence $v_k$ by
\begin{align}	\label{eq:defV_k.tilde}
		 \tilde v_k  := \left(1 - \gamma L\right)^k u,
\end{align}
for some small parameter $\gamma >0$.

As a direct consequence from the spectral theorem for bounded self-adjoint operators, the relaxed method of reflections is always convergent, if $\gamma$ is chosen small enough in \eqref{eq:defV_k.tilde}, independently of the particle configuration. More precisely, we have the following corollary.

\begin{cor} \label{cor:L^2.no.rate}
	Assume that $B_i \cap B_j = \emptyset$. There exists a universal constant $\gamma_0 > 0$ such that for all $0 < \gamma < \gamma_0$
	\begin{align}
		(1-\gamma L)^k \to P,
	\end{align}
	pointwise as operators on $\dot H^1_\sigma(\R^3)$, where $P$ is the orthogonal projection from $\dot H^1_\sigma(\R^3)$ to $W$.
	In particular, for all $f \in \dot H^{-1}(\R^3)$, the sequence $\tilde v_k$ defined in \eqref{eq:defV_k.tilde} converges to
	the unique weak solution $v \in \dot H^1(\R^3)$ of problem \eqref{eq:weak.from}.
\end{cor}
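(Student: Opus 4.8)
The plan is to deduce the corollary from Theorem~\ref{th:L.bounded} together with the spectral theorem for bounded self-adjoint operators. First I would record the two structural facts about $L$ that are needed. By Theorem~\ref{th:L.bounded}, $L$ is bounded, nonnegative and self-adjoint on $\dot H^1_\sigma(\R^3)$ with $\|L\| \leq C$ for the universal constant $C$ appearing there, so its spectrum is contained in $[0,C]$. Second, I would identify the kernel of $L$: since each $Q_i = 1 - P_i$ is an orthogonal projection, $\langle L w, w\rangle = \sum_i \langle Q_i w, w\rangle = \sum_i \|Q_i w\|^2$, whence $L w = 0$ if and only if $Q_i w = 0$ for every $i$, i.e. $w \in \bigcap_i W_i = W$. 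Thus $\ker L = W$, and by self-adjointness the spectral projection $E(\{0\})$ of $L$ associated with the eigenvalue $0$ equals the orthogonal projection $P$ onto $W$.

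Next I would set $\gamma_0 := 1/C$. For $0 < \gamma < \gamma_0$ one has $\gamma \|L\| \leq \gamma C < 1$, so the continuous functions $f_k(\lambda) := (1-\gamma\lambda)^k$ on $[0,\|L\|]$ satisfy the uniform bound $|f_k(\lambda)| \leq 1$ together with the pointwise limit $f_k(\lambda) \to \mathbf{1}_{\{0\}}(\lambda)$ as $k \to \infty$ (indeed $f_k(0) = 1$ for all $k$, while $0 < 1-\gamma\lambda < 1$ for $0 < \lambda \leq \|L\|$). Applying the functional calculus, $(1-\gamma L)^k = f_k(L)$, and for any $w \in \dot H^1_\sigma(\R^3)$,
\[
	\bigl\| (1-\gamma L)^k w - P w \bigr\|^2 = \int_{[0,\|L\|]} \bigl| f_k(\lambda) - \mathbf{1}_{\{0\}}(\lambda) \bigr|^2 \dd \mu_w(\lambda),
\]
where $\mu_w(\cdot) := \langle E(\cdot) w, w\rangle$ is the finite spectral measure of $w$ (of total mass $\|w\|^2$). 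By dominated convergence the right-hand side tends to $0$, which is precisely the assertion that $(1-\gamma L)^k \to P$ in the strong operator topology, i.e. pointwise as operators on $\dot H^1_\sigma(\R^3)$. (Any $\gamma_0 < 2/C$ would serve equally well, after replacing the crude bound $|f_k| \leq 1$ by the observation that $|1-\gamma\lambda| < 1$ on $(0,\|L\|]$.)

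Finally, for the concluding statement, given $f \in \dot H^{-1}(\R^3)$ let $u \in \dot H^1_\sigma(\R^3)$ be the unique solution of the whole-space Stokes problem~\eqref{eq:defU}; as recalled above, the unique weak solution of~\eqref{eq:weak.from} is $v = P u$, so $\tilde v_k = (1-\gamma L)^k u \to P u = v$ in $\dot H^1(\R^3)$ by the convergence just established. The argument is essentially soft, and I do not expect a genuine obstacle; the only points needing (minor) attention are the configuration-independent choice of $\gamma_0$, which relies crucially on the uniform bound $\|L\| \leq C$ from Theorem~\ref{th:L.bounded}, and the identification of the strong limit with the projection $P$ onto $W$, which uses the computation $\ker L = W$ made in the first step.
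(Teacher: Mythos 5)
Your proof is correct and follows essentially the same route as the paper, which likewise deduces the corollary from Theorem \ref{th:L.bounded} and the spectral theorem for bounded self-adjoint operators (deferring the details to \cite[Proposition 2.13]{HoferVelazquez18}). Your write-up simply makes explicit the steps the paper omits: the identification $\ker L = W$ via $\langle Lw,w\rangle = \sum_i \|Q_i w\|^2$, the functional-calculus/dominated-convergence argument, and the identification $v = Pu$.
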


\bigskip

For a quantitative convergence result, we impose the following  additional assumption on the particle configuration:
\begin{align} \label{eq:theta.separation}
	\text{there exists} \quad \theta > 1 \quad \text{such that} \quad  B_{\theta R_i}(X_i) \cap B_{\theta R_j}(X_j) = \emptyset \qquad \text{for all } i \neq j.
\end{align}

Under this constraint, we obtain the following result.
\begin{thm} \label{th:conv.relaxed}
Assume that \eqref{eq:theta.separation} holds for some $\theta > 1$. Then, there exist constants  $0<c<C<\infty$ depending only on $\theta$ such that for all $\gamma >0$
	\begin{align}
	\|(1-\gamma L)^k - P\| \leq \max\{1 - c \gamma, |1 - C \gamma|\}^k
\end{align}
\end{thm}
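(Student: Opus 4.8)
The plan is to reduce Theorem~\ref{th:conv.relaxed} to a spectral gap statement for $L$, namely that under the separation assumption~\eqref{eq:theta.separation} there is a constant $c = c(\theta) > 0$ with
\[
	c \, \| P^\perp w \|_{\dot H^1}^2 \leq \langle L w, w \rangle \leq C \, \| P^\perp w \|_{\dot H^1}^2
	\qquad \text{for all } w \in \dot H^1_\sigma(\R^3),
\]
where $P^\perp = 1 - P$ is the orthogonal projection onto $W^\perp = \overline{\sum_i W_i^\perp}$. The upper bound is exactly Theorem~\ref{th:L.bounded} (with $C$ now allowed to depend on $\theta$, though it need not). Once this two-sided bound is in hand, the conclusion is a soft consequence of the spectral theorem: $L$ is nonnegative self-adjoint, $\ker L = W$ (this follows from the lower bound, or already from the definition since $Lw = 0$ forces each $Q_i w = 0$), and on the orthogonal complement $W^\perp = \overline{\operatorname{ran} L}$ the spectrum of $L$ is contained in $[c, C]$. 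Hence $(1-\gamma L)$ acts as the identity on $W$ and has operator norm $\max\{|1-c\gamma|, |1-C\gamma|\} = \max\{1 - c\gamma, |1 - C\gamma|\}$ on $W^\perp$ (using $c \le C$, so $1 - c\gamma \ge 1 - C\gamma$ and the maximum of the two absolute values over the interval $[c,C]$ is attained at the endpoints). Since $(1-\gamma L)^k - P$ vanishes on $W$ and equals $(1-\gamma L)^k$ restricted to $W^\perp$, its norm is bounded by the $k$-th power of that maximum, which is the claim. I would write this reduction first, in a short paragraph, citing the spectral theorem for bounded self-adjoint operators.

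The substance is therefore the \emph{lower} bound $\langle Lw, w\rangle \gtrsim \|P^\perp w\|^2$. Here I would use that $\langle L w, w\rangle = \sum_i \|Q_i w\|^2$ since the $Q_i$ are orthogonal projections. Writing $w = P^\perp w + Pw$ and noting $Q_i P w = 0$ only up to the contribution of the other particles, the natural approach is: given an arbitrary $g \in W^\perp$, decompose $g = \sum_i g_i$ with $g_i \in W_i^\perp$ (such a decomposition exists with $\sum_i \|g_i\|^2 \lesssim \|g\|^2$ by the near-orthogonality of the spaces $W_i^\perp$, which is precisely what the $\theta$-separation buys and what underlies the boundedness proof of Theorem~\ref{th:L.bounded}), and estimate $\|g\|^2 = \sum_i \langle g, g_i\rangle = \sum_i \langle Q_i g, g_i \rangle \leq (\sum_i \|Q_i g\|^2)^{1/2} (\sum_i \|g_i\|^2)^{1/2} \lesssim \langle L g, g\rangle^{1/2} \|g\|$. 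This gives $\|g\|^2 \lesssim \langle Lg, g\rangle$ for $g \in W^\perp$; applying it to $g = P^\perp w$ and using $Q_i P w = 0$ wherever... — and this is the delicate point — I must be careful that $Q_i w$ and $Q_i P^\perp w$ differ only by $Q_i P w$, which need not vanish for a single $i$. The clean way around this: $\langle L w, w \rangle = \langle L P^\perp w, P^\perp w\rangle$ because $L$ maps into $W^\perp$... no: $L$ does \emph{not} annihilate $W$ in general term-by-term, but $\langle L w, w\rangle \ge \langle L P^\perp w, P^\perp w \rangle$ is also not automatic. The correct identity is that $\langle Lw, w \rangle$, viewed as a quadratic form, satisfies $\langle L w, w\rangle = \langle L P^\perp w, P^\perp w \rangle$ \emph{iff} $L$ commutes with $P$; and indeed $LW \subseteq W^\perp$? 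That needs checking. So the honest route is to prove the lower bound for $L$ restricted to $W^\perp$ directly via the decomposition argument above, and separately observe $\ker L = W$, which together with self-adjointness forces $\overline{\operatorname{ran} L} = W^\perp$ and hence the spectral decomposition; the form bound $\langle Lw, w \rangle \ge c \|P^\perp w\|^2$ then follows from $\langle Lw, w\rangle = \langle L w, P^\perp w\rangle \ge \dots$ only after establishing $PL = LP = PLP$-type relations.

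I expect the main obstacle to be exactly this interaction between $P$ and the individual $Q_i$: making rigorous that the spectral gap of $L$ on the reducing subspace $W^\perp$ translates into the stated operator-norm bound for $(1-\gamma L)^k - P$. The resolution is to verify that $W$ is an invariant subspace for every $Q_i$ (since $w \in W$ means $e w = 0$ in all $B_j$, so $Q_i w = 0 \in W$), hence for $L$, so $W$ and $W^\perp$ both reduce $L$; on $W$ we have $L|_W = 0$ and on $W^\perp$ we have $\sigma(L|_{W^\perp}) \subseteq [c,C]$ by the two-sided bound; and $(1-\gamma L)^k - P$ is block-diagonal, zero on $W$ and $(1-\gamma L|_{W^\perp})^k$ on $W^\perp$, whose norm is $\sup_{t \in [c,C]} |1 - \gamma t|^k = \max\{1-c\gamma, |1-C\gamma|\}^k$. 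The only remaining quantitative input beyond Theorem~\ref{th:L.bounded} is the constant $c(\theta)$ in the lower bound, extracted from the overlap estimate between the dipole fields associated to two $\theta$-separated balls; this is where the dependence on $\theta$ (and the failure as $\theta \to 1$) enters, and I would isolate it as a lemma on the near-orthogonality of $\{W_i^\perp\}_i$ stated uniformly in the configuration given~\eqref{eq:theta.separation}.
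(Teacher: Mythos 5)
Your soft part — reducing the theorem to a two-sided bound on the quadratic form of $L$, noting that $W$ reduces $L$ (since $Q_i$ annihilates $W\subset W_i$ for every single $i$, so the ``delicate point'' you worry about is in fact immediate), and then invoking the spectral theorem to get $\|(1-\gamma L)^k-P\|\le\sup_{t\in[c,C]}|1-\gamma t|^k$ — is correct and is exactly the paper's reduction (the paper defers these details to \cite[Proposition 2.13]{HoferVelazquez18}). The genuine content is the lower bound, i.e.\ Lemma \ref{lem:spectral.gap}, and here your proposal has a real gap. Your argument rests on the claim that every $g\in W^\perp$ can be written as $g=\sum_i g_i$ with $g_i\in W_i^\perp$ and $\sum_i\|g_i\|^2\le C(\theta)\|g\|^2$, which you attribute to ``near-orthogonality of the $W_i^\perp$, which is precisely what the $\theta$-separation buys and what underlies the boundedness proof of Theorem \ref{th:L.bounded}.'' But Theorem \ref{th:L.bounded} holds under mere disjointness, with no $\theta$-separation and no gap, so whatever underlies its proof cannot supply this decomposition; and in fact the bounded decomposition is \emph{equivalent} to the spectral gap you are trying to prove (given the gap, take $g_i=Q_i(L|_{W^\perp})^{-1}g$; conversely your Cauchy--Schwarz computation). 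Asserting it is assuming the conclusion, and no proof is offered beyond a pointer to pairwise overlap estimates between dipole fields.

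Moreover, the hinted route via pairwise overlaps would fail if executed naively: for two balls the interaction scales like $(R/d_{ij})^3$ because $\nabla Q_i w$ decays like $|x-X_i|^{-3}$, and summing these absolute pairwise bounds over a configuration that is merely $\theta$-separated (particles everywhere in $\R^3$) diverges logarithmically — this is exactly the critical-exponent obstruction discussed in Section \ref{sec:heuristics}, which is why the paper never sums pairwise interactions in this part of the argument. The paper's proof of Lemma \ref{lem:spectral.gap} is different and avoids the issue entirely: it uses that $w\in W^\perp$ is the element of minimal $\dot H^1$-norm among all $\varphi$ with $e\varphi=ew$ in every $B_i$, and builds a competitor $\varphi=\sum_i\varphi_i$ where $\varphi_i$ is a divergence-free extension of $Q_iw|_{B_i}$ supported in $B_{\theta R_i}(X_i)$ (Lemma \ref{lem:extest}) with $\|\varphi_i\|_{\dot H^1}\le C(\theta)\|Q_iw\|_{\dot H^1}$; disjointness of the enlarged balls gives $\sum_i\|\varphi_i\|^2=\|\varphi\|^2\ge\|w\|^2$, hence $\langle Lw,w\rangle=\sum_i\|Q_iw\|^2\ge c(\theta)\|w\|^2$. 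To complete your proof you would need either this variational/extension argument or an independent proof of the bounded decomposition; as written, the key quantitative lemma is missing.
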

This theorem implies that we may choose $\gamma$ (depending only on $\theta$) such that the convergence $\tilde v_k \to v$ is exponential with a rate that depends only on $\theta$.

\subsection{Main results for the unrelaxed method} 

For convergence of the unrelaxed method of reflections, we need an additional smallness condition. We introduce
\begin{align} \label{def:d_min}
	d_{\min} := \inf_{i \neq j} |X_i - X_j|, && R_{\max} := \sup_{i \in I} R_i.
\end{align}
The smallness needed for the results stated in this subsection is in terms of
\begin{align} \label{def:phi}
	\phi_0 := \frac{R_{\max}^3}{d_{\min}^3}.
\end{align}

In particular, we assume $R_{\max} < \infty$ and $d_{\min} > 0$.
We emphasize that smallness of $\phi_0$ is a stronger condition than the assumption that $\theta$ in \eqref{eq:theta.separation} can be chosen large.
In the case that the particle radii are identical, these conditions are equivalent, though.

We consider the convergence of the method of reflections in the spaces $\dot W^{1,q}(\R^3)$ for $1 < q < \infty$.
Since the operators $Q_i$ are orthogonal projections in $\dot H^1_\sigma(\R^3)$, we first have to obtain a meaningful definition of the method in $\dot W^{1,q}(\R^3)$ by extending these operators and thus $L$ from the dense set $\dot H^1_\sigma(\R^3) \cap \dot W^{1,q}_\sigma(\R^3)$ to $\dot W^{1,q}_\sigma(\R^3)$.

\begin{thm} \label{th:L^q.rate.phi}
	Let $1 < q < \infty$. Then, there exists $\bar \phi_0 > 0$ depending only on $\theta$ and $q$ such that for all 
	$\phi_0 < \bar \phi_0$ the operator $L$ can be extended to a well-defined operator on $\dot W^{1,q}_\sigma(\R^3)$ and for all $f \in \dot W^{-1,q}(\R^3)$
	there exists a unique weak solution $v \in \dot W^{1,q}(\R^3)$ to \eqref{eq:weak.from}. Moreover, for all $k \in \N$
	\begin{align}
		\|(1-L)^k u - v\|_{\dot W^{1,q}(\R^3)} \leq C \|e (1-L)^k u\|_{L^q(\cup_i B_i)} \leq C (C \phi_0)^k \|e u\|_{L^q(\cup_i B_i)},
	\end{align}
	where $u \in \dot W^{1,q}(\R^3)$ is the unique weak solution to \eqref{eq:defU} and $C$ depends only on $q$.
\end{thm}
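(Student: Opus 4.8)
The plan is to bootstrap from the Hilbert-space analysis, which is already in place as Theorem \ref{th:L^q.rate.phi} read in the case $q=2$ (proved in Section \ref{sec:L^2}), and upgrade it to general $1<q<\infty$ via Calderón--Zygmund theory and interpolation/duality. The first step is to get a handle on the single-particle operators $Q_i$ in $\dot W^{1,q}_\sigma(\R^3)$. Since $Q_i u \in W_i^\perp$ has the explicit structure of a force dipole, its gradient decays like $|x-X_i|^{-3}$; I would write $\nabla Q_i u$ as an explicit singular integral acting on $eu$ restricted to $B_i$, plus a local contribution, using the Oseen-type tensor for the Stokes equations. Summing over $i$, the operator $\nabla L$ applied to $w\in\dot W^{1,q}_\sigma$ then splits into (a) a diagonal/near-diagonal piece supported near $\cup_i B_i$, controlled directly using the $\theta$-separation so that the balls $B_{\theta R_i}$ are disjoint, and (b) a far-field piece that, after the $|x-X_i|^{-3}$ kernels are glued together across the particles, looks like a classical Calderón--Zygmund operator with a $\phi_0$-weighted kernel. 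The key estimate to establish is
\begin{align}
	\|\nabla L w\|_{L^q(\R^3)} \leq C \phi_0 \, \|\nabla w\|_{L^q(\R^3)} + C \|e w\|_{L^q(\cup_i B_i)},
\end{align}
with $C=C(\theta,q)$, and more importantly the contraction-type bound $\|e L w\|_{L^q(\cup_i B_i)} \leq C\phi_0 \|e w\|_{L^q(\cup_i B_i)}$ that makes $(1-L)$ a contraction on the relevant quotient once $\phi_0<\bar\phi_0$.

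Next I would set up the iteration exactly as in the $\dot H^1$ case. The point is that $e(1-L)^k u$ lives in $L^q(\cup_i B_i)$, and I would show that the map $w \mapsto e L w$ on $L^q(\cup_i B_i)$ has norm $\le C\phi_0$; iterating gives $\|e(1-L)^k u\|_{L^q(\cup_i B_i)} \le (C\phi_0)^k \|eu\|_{L^q(\cup_i B_i)}$. The first inequality in the statement, $\|(1-L)^k u - v\|_{\dot W^{1,q}} \le C \|e(1-L)^k u\|_{L^q(\cup_i B_i)}$, should follow from an energy/elliptic-regularity estimate: both $(1-L)^k u$ and $v$ solve the Stokes equations in $\R^3\setminus K$ with the same right-hand side $f$, so their difference is a function in $W_i^\perp$-type spaces whose $\dot W^{1,q}$ norm is controlled by the defect $e(1-L)^k u$ on the particles — this is a $\dot W^{1,q}$ analogue of the Bogovskii/Stokes correction used to produce $v$ from $u$ in Section \ref{sec:mainResults}, and it also yields existence and uniqueness of $v\in \dot W^{1,q}$ for $f\in\dot W^{-1,q}$. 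Extension of $L$ from the dense subspace $\dot H^1_\sigma\cap\dot W^{1,q}_\sigma$ to all of $\dot W^{1,q}_\sigma$ is then immediate from the a priori bound above.

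For the boundedness of the far-field Calderón--Zygmund piece, I would either invoke the $T1$ theorem / standard Calderón--Zygmund theory on $\R^3$ for an operator whose kernel is a sum of truncated dipole kernels weighted by $R_i^3$, using that $\sum_i R_i^3 \1_{B_i}$ has bounded local density $\lesssim \phi_0$ thanks to $d_{\min}$, or — cleaner — interpolate between the $q=2$ bound already established and a weak-$(1,1)$ or BMO endpoint estimate for the kernel, then dualize for $q>2$. The near-field piece is handled particle-by-particle: on each $B_{\theta R_i}$ the operator $Q_i$ is a bounded operator on $\dot W^{1,q}(B_{\theta R_i})$ with norm independent of $R_i$ by scaling, and disjointness of the $B_{\theta R_i}$ prevents overlap, so the $\ell^q$-sum of local contributions is controlled.

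The main obstacle, as usual for this decay exponent, is the criticality of $|x-X_i|^{-3}$ in $\R^3$: the naive sum $\sum_i R_i^3 |x-X_i|^{-3}$ diverges both near the particles and (logarithmically) at infinity. The resolution near the particles is precisely the Calderón--Zygmund structure — one must exploit cancellation in the dipole kernel rather than take absolute values — and this is the step where the gain over \cite{Hofer18MeanField,NiethammerSchubert19} (which lose a $\log N$) comes from, and where the scaling in $\phi_0$ rather than $\phi_0\log N$ is won. At infinity, working in the homogeneous space $\dot W^{1,q}(\R^3)$ rather than with pointwise bounds is what makes the sum converge, since we only ever estimate $L^q$ norms of gradients. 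I expect the technical heart of Section \ref{sec:L^p} to be verifying the Calderón--Zygmund kernel conditions (size and Hörmander regularity) for the glued far-field operator uniformly in the configuration, with the $\theta$-separation entering to control the geometry and $\phi_0$ entering as the overall small constant.
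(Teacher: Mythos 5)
Your outline correctly identifies the heart of the per-step gain: the splitting of each $Q_i$ into an explicit dipole plus a faster-decaying remainder and the use of Calder\'on--Zygmund cancellation to sum the critical $|x-X_i|^{-3}$ kernels; this is exactly the mechanism of Propositions \ref{pro:char.L} and \ref{pro:char.L.q} in the paper (implemented there by comparing the discrete dipole sum with a convolution $\mathcal K \ast g$ against a piecewise-constant density, rather than by verifying kernel conditions for a glued configuration-dependent kernel). One slip to fix: the ``contraction-type bound'' $\|e L w\|_{L^q(\cup_i B_i)} \leq C\phi_0\|e w\|_{L^q(\cup_i B_i)}$ is false as stated, since $eQ_i w = ew$ in $B_i$ and hence $eLw = ew + \sum_{j\neq i} eQ_j w$ there; the correct smallness is for the off-diagonal part, i.e. $\|e(1-L)w\|_{L^q(\cup_i B_i)} \leq C\phi_0 \|ew\|_{L^q(\cup_i B_i)}$ for $w$ in the dipole space $V_q$ (this is \eqref{eq:T_Bounded} and its $L^q$ analogue), and the iteration then runs because the symmetric gradient on the particles only sees the $V_q$-component of the iterates. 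As written, the bound you state would not produce the decay $(C\phi_0)^k$.

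The genuine gap is the step you dispose of as ``an energy/elliptic-regularity estimate'' or a ``$\dot W^{1,q}$ analogue of the Bogovskii/Stokes correction'': the inequality $\|(1-L)^k u - v\|_{\dot W^{1,q}(\R^3)} \leq C\|e(1-L)^k u\|_{L^q(\cup_i B_i)}$, together with existence and uniqueness of $v \in \dot W^{1,q}(\R^3)$, is precisely the uniform-in-configuration well-posedness of \eqref{eq:weak.from} in $\dot W^{1,q}$, equivalently the decomposition $\dot W^{1,q}_\sigma(\R^3) = W_q \oplus V_q$ with the estimate \eqref{eq:est.V_q}. This is the main difficulty of the theorem: a direct elliptic/boundary-value approach would have to produce constants independent of the (possibly infinite) particle configuration, which the paper explicitly avoids as it seems very technical. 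Instead, the paper bootstraps through the method of reflections itself: after extending $Q_i$ and $L$ to $\dot W^{1,r}_\sigma$ with configuration-uniform but \emph{not small} bounds (Proposition \ref{pro:L.bounded.L^q}, via the exterior Stokes problem and a duality argument), it interpolates the geometric $\dot H^1$ rate $(C\phi_0)^k$ from Corollary \ref{co:L^2.rate.phi} against the crude bound $C_r^{k+1}$ on $L(1-L)^k$ in $\dot W^{1,r}$ with $r = 2q$, concluding that $(1-L)^k u$ is Cauchy in $\dot W^{1,q}$ for $\phi_0$ small; existence, the uniform bound on $v$, and uniqueness (by testing against solutions of the dual problem) follow, and only then does Proposition \ref{pro:char.L.q} upgrade the rate to $(C\phi_0)^k$. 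Without this interpolation bootstrap (or a genuinely new proof of uniform $\dot W^{1,q}$ solvability), your argument is circular: you need the very solvability estimate you are trying to establish in order to convert the decay of the defect $e(1-L)^k u$ on the particles into convergence in $\dot W^{1,q}(\R^3)$.
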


\begin{rem}
	We point out that this theorem (applied with $k=0$) implies uniform a priori estimates for the problem \eqref{eq:weak.from}, i.e. under the assumptions of the theorem
		\begin{align}
		\|v\|_{\dot W^{1,q}(\R^3)} \leq C \|f\|_{\dot W^{-1,q}(\R^3)}
	\end{align}
	where $C$ depends only on $q$.
\end{rem}

\begin{rem}
	By Morrey's inequality, we see that the  above theorem applied with $q > 3$ in particular implies convergence for the H\"older seminorm $[\cdot]_{C^{0,\alpha}}$,
	$\alpha = 1 - 3/q$. Moreover, if  in addition $f \in \dot W^{-1,r}(\R^3)$, $r < 3$ we can deduce convergence in $C^{0,\alpha}$.
\end{rem}

Theorem \ref{th:L^q.rate.phi} says that we gain a factor $\phi_0$ each time we apply the method of reflections. 
However, it should be emphasized that the estimate only implies that the $k$-th order corrector $L (1-L)^{k-1}$ is of order $\phi_0^{k-1}$.
This $k$-th order corrector coincides with the $(k+1)$-st term on the right hand side of \eqref{eq:MOR.series} and we have argued in Section \ref{sec:heuristics} that this term is expected to be of order $\phi_0^k$. In particular, the above theorem does not imply any smallness on
$u - v$.

Such a smallness cannot hold in general in the space $\dot W^{1,q}(\R^3)$ since 
\begin{align}
	\|u - v\|^q_{\dot W^{1,q}(\R^3)} \geq \|e(u - v)\|^q_{L^q(\cup_i B_i)} = \|e u\|^q_{L^q(\cup_i B_i)}.
\end{align}
For a sufficiently regular function $f$, one might exploit the term $\|e u\|_{L^q(\cup_i B_i)}$ for some smallness in $\phi_0$. At best, though 
one can hope for a factor $\phi_0^{1/q}$. 

\medskip

However, the heuristics that the $k$-th order approximation gives an accuracy $\phi_0^{k+1}$ holds if we look at the function itself instead of the gradient.
For this result we need to make the additional assumption that 
for some $q < 3/2$
\begin{align} \label{eq:summability} \tag{H2}
	\lambda_{q} := \sup_i \sum_{j \neq i}\frac{R_j^3}{|X_i - X_j|^{2q}} < \infty.
\end{align}
We emphasize that this is always fulfilled if all the particles are contained in a bounded domain or if the number density decays sufficiently fast at infinity.
In particular, in these cases $\lambda_q \leq C \phi_0$.

\begin{cor} \label{cor.average}
	Let $1< r < 3 < q < \infty$ and assume that \eqref{eq:summability} holds for $q'$.
	 Then, there exists $\bar \phi_0 > 0$ depending only on $q$ such that for all 
	$\phi_0 < \bar \phi_0$ and for all $f \in \dot W^{-1,q}(\R^3) \cap \dot W^{-1,r}(\R^3)$ and all $k \in \N$
	\begin{align}	\label{eq:L^infty.convergence}
	 	\|(1-L)^k u - v\|_{L^\infty(\R^3)} \leq C (R_{\max}^\alpha + \lambda_{q'}^{1/q'}) (C \phi_0)^k \|e u\|_{L^q(\cup_i B_i)},
	\end{align}
	where $\alpha = 1 - 3/q$ and $C$ depends only on $q$. Moreover,
	\begin{align} \label{eq:convergence.average}
		\sup_{i \in I} \left | \fint_{\partial B_i} (1-L)^k u - v \right| \leq C \lambda_{q'}^{1/q'} (C \phi_0)^k \|e u\|_{L^q(\cup_i B_i)}.
	\end{align}
\end{cor}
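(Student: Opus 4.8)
The plan is to bootstrap the $L^\infty$ and pointwise-average estimates from the already-established $\dot W^{1,q}$ convergence rate of Theorem~\ref{th:L^q.rate.phi}, using the dipole structure of the correctors together with the summability hypothesis \eqref{eq:summability}. Write $w_k := (1-L)^k u - v = (1-L)^k u - Pu$ for the error after $k$ steps. From Theorem~\ref{th:L^q.rate.phi} we already control $\|e w_k\|_{L^q(\cup_i B_i)} \le (C\phi_0)^k \|e u\|_{L^q(\cup_i B_i)}$ and $\|w_k\|_{\dot W^{1,q}} \le C(C\phi_0)^k \|e u\|_{L^q(\cup_i B_i)}$. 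Since $w_{k+1} = w_k - L w_k$ and $L w_k = \sum_i Q_i w_k$ with each $Q_i w_k \in W_i^\perp$ a dipole potential, the point is that $Q_i w_k$ depends only on $e w_k$ on $B_i$ and decays like $|x-X_i|^{-2}$ (its gradient like $|x-X_i|^{-3}$). So I would first record the local estimate $\|Q_i g\|_{\dot H^1} \le C \|e g\|_{L^2(B_i)}$ (and its $L^q$ analogue, available from Section~\ref{sec:L^p}), together with the explicit far-field bound $|Q_i g(x)| \le C R_i^{3} \, \fint_{B_i} |e g| \, |x - X_i|^{-2}$ valid for $|x-X_i| \ge 2R_i$ — this is the quantitative form of the dipole decay stated in Section~\ref{sec:heuristics}, and it is what converts an $L^q$-on-the-balls bound into a pointwise bound away from the balls.

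Next I would estimate $\|w_k\|_{L^\infty}$ in two regimes. Away from all particles, or more precisely at any point $x$, split $L w_{k-1}(x) = \sum_i Q_i w_{k-1}(x)$ into the (at most one, by \eqref{eq:theta.separation}) index $i_0$ with $x$ near $B_{i_0}$ and the remaining far indices. For the far part, insert the dipole bound and estimate $\sum_{j} R_j^3 \fint_{B_j}|e w_{k-1}| \, |x-X_j|^{-2}$ by Hölder in $j$: the sum $\big(\sum_j R_j^{3} |x-X_j|^{-2q'}\big)^{1/q'}$ is controlled, after covering $\R^3$ by the disjoint balls $B_{\theta R_j}$, by $C\lambda_{q'}^{1/q'}$ (this is exactly where \eqref{eq:summability} for $q'$ enters), while $\big(\sum_j R_j^{3(1-q/ q')}\cdots\big)$ pairs with $\|e w_{k-1}\|_{L^q(\cup B_j)}$; one has to be a little careful that $\fint_{B_j}|e w_{k-1}| \le C R_j^{-3/q}\|e w_{k-1}\|_{L^q(B_j)}$, and the exponents work out because $1< r < 3 < q$ gives room. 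This yields the far contribution bounded by $C\lambda_{q'}^{1/q'}(C\phi_0)^{k-1}\|eu\|_{L^q(\cup B_i)}$. For the near/local index $i_0$, use Morrey's inequality on $\dot W^{1,q}$ with $q>3$: $\|Q_{i_0} w_{k-1}\|_{L^\infty} \le C \|Q_{i_0}w_{k-1}\|_{\dot W^{1,q}} \le C\|e w_{k-1}\|_{L^q(B_{i_0})}$, again $\le (C\phi_0)^{k-1}\|eu\|_{L^q(\cup B_i)}$. Actually one also has to handle $w_k$ itself rather than only $Lw_{k-1}$: telescoping $w_k = \sum_{m\ge k} L w_m$ (justified since $L w_m \to 0$ in $\dot W^{1,q}$) and summing the geometric series in $\phi_0$ gives the bound \eqref{eq:L^infty.convergence}, with the $R_{\max}^\alpha$ factor coming from the Morrey constant made scale-correct (the Sobolev embedding $\dot W^{1,q}(\R^3)\hookrightarrow \dot C^{0,\alpha}$ with $\alpha = 1-3/q$ produces oscillation control $[\cdot]_{C^{0,\alpha}}$, and to pass to $L^\infty$ one needs a length scale, here $R_{\max}$, together with the average estimate).

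Finally, for \eqref{eq:convergence.average}, I would estimate $\fint_{\partial B_i} w_k$ directly, observing that the local index $i_0 = i$ now contributes nothing to the dipole sum evaluated as an average over $\partial B_i$ after subtracting the rigid part — more precisely $\fint_{\partial B_i}(1-L)^k u = \fint_{\partial B_i}((1-L)^k u)$ and the point is that only the far dipoles contribute to the average up to terms already controlled; so one is left with exactly the far-field sum, giving the cleaner bound $C\lambda_{q'}^{1/q'}(C\phi_0)^k\|eu\|_{L^q(\cup B_i)}$ with no $R_{\max}^\alpha$ term. The main obstacle I anticipate is the bookkeeping of scale-invariant constants: the dipole bound, the rescaled Morrey/Sobolev constants, and the Hölder splitting of $\sum_j$ all involve powers of $R_j$ and $|X_i-X_j|$, and one must verify that when everything is combined the powers collapse to precisely $\lambda_{q'}^{1/q'}$ and $R_{\max}^\alpha$ and that no extra factor of the number of particles sneaks in — this is the place where the separation condition \eqref{eq:theta.separation} (disjointness of the $B_{\theta R_j}$) is essential, since it lets one dominate discrete sums by integrals of the local volume fraction uniformly. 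A secondary subtlety is making the telescoping $w_k = \sum_{m \ge k} Lw_m$ rigorous, which follows from the geometric decay of $\|Lw_m\|_{\dot W^{1,q}}$ already proved for $\phi_0 < \bar\phi_0$.
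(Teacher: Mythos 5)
Your proposal is correct and follows essentially the same route as the paper: telescope the error using the $\dot W^{1,q}$ rate from Theorem \ref{th:L^q.rate.phi}, split $Lw$ pointwise into the nearest particle and the far particles, bound the far sum by the dipole decay plus H\"older and \eqref{eq:summability} to produce $\lambda_{q'}^{1/q'}$, and control the near term via Sobolev/Korn together with $\fint_{\partial B_i}Q_i w=0$, which is also exactly why the $R_{\max}^\alpha$ term disappears in \eqref{eq:convergence.average}. The only cosmetic difference is that the paper handles the nearest ball via the maximum modulus estimate for the exterior Stokes problem before invoking the H\"older seminorm, whereas you obtain the scale-correct $R_{\max}^\alpha$ factor directly from the (properly rescaled) Morrey embedding and the zero boundary average, as you note at the end.
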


\begin{rem}
	\begin{enumerate}[(i)]
		\item To see that the term $R_{\max}^\alpha$ on the right-hand side of \eqref{eq:L^infty.convergence} is needed, we consider the case $k=0$ and look at the difference $u-v$ in $B_i$ for any $i$. The function $v$ is restricted to a rigid body motion inside $B_i$, whereas $u$ is more or less arbitrary.
		Since $u \in C^{0,\alpha}$, the best possible approximation for $u$ in $L^\infty(B_i)$ by a rigid body motion will therefore in general 
		be afflicted with an error $R_{\max}^\alpha$.
	
		\item If one assumes additional regularity on $f$, namely $f \in L^q(\R^3)$, 
		we find $\nabla u \in L^\infty(\R^3)$ and therefore 	$\|e u\|_{L^q(\cup_i B_i)} \leq \phi_0^{1/q}$. 
		Inserting this into \eqref{eq:convergence.average} yields the optimal convergence rate $\phi_0^{k+1}$ provided that $\lambda_{q'} \leq C \phi_0$.
		
		\item The assumption $f \in \dot W^{-1,r}(\R^3)$ in the above corollary is only needed to fix the problem that functions in $\dot W^{1,q}(\R^3)$
		are only defined up to a constant. 
	\end{enumerate}
\end{rem}

\section{Proof of the main results on the relaxed method of reflections}

\label{sec:relaxed}

In this section we prove that the operator $L$ always defines a bounded operator under the mere condition that the particles are disjoint.
Being a sum of orthogonal projections, $L$ is also self-adjoint. Therefore, the convergence of the relaxed method of reflections is a direct consequence of
spectral theory. Moreover, we get quantitative convergence results in the case that the separation condition \eqref{eq:theta.separation} holds.

We first recall from \cite{NiethammerSchubert19} some properties of the subspaces $W_i$ and $W_i^\perp$ (see equation \eqref{eq:defW}) and the corresponding orthogonal projections $P_i$ and $Q_i$.

\begin{lem}[{\cite[Lemma 4.2]{NiethammerSchubert19}}]
\label{lem:projectionInsideParticle}
	Let $w \in W_i^\perp$. Then
	\begin{align}
		\fint_{\partial B_i} w= \fint_{B_i} \frac{\nabla w - (\nabla w)^T }{2} = 0.
	\end{align}
	In particular, for all $w \in  \Hdot^1_\sigma(\IR^3)$ we have
	\begin{align}
		P_i w = V + \omega \times (x - X_i), && V = \fint_{\partial B_i} w, 
		&& \omega = \frac{1}{2} \fint_{B_i} \curl w
	\end{align}
\end{lem}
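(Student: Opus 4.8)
The plan is to establish the two displayed identities for an arbitrary $w\in W_i^\perp$ first, and then to read off the formula for $P_i$ by applying them to $Q_iw=w-P_iw$. Geometrically the claim for $W_i^\perp$ says that an exterior Stokes flow around $B_i$ with vanishing net force and net torque on $\partial B_i$ has vanishing zeroth and first velocity moments on $\partial B_i$; the tool I would use is the Lorentz reciprocal identity, tested against the flows generated by a translating, resp.\ a rotating, sphere. So for $V_0,\omega_0\in\R^3$ I let $\Phi^{V_0}$ (resp.\ $\Phi^{\omega_0}$) be the $\dot H^1_\sigma(\R^3)$ Stokes flow driven by $B_i$ translating with velocity $V_0$ (resp.\ rotating with angular velocity $\omega_0$), completed inside $B_i$ by the corresponding rigid motion; these are explicit (a Stokeslet plus a potential dipole, resp.\ a rotlet), are divergence free and continuous across $\partial B_i$, and satisfy $e\Phi^{V_0}=e\Phi^{\omega_0}=0$ in $B_i$, hence $\Phi^{V_0},\Phi^{\omega_0}\in W_i$. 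The feature I would exploit is the classical fact that their tractions on $\partial B_i$ are $\sigma[\Phi^{V_0}]n=c_1V_0$ with $c_1\neq0$ (a \emph{constant} vector field on $\partial B_i$) and $\sigma[\Phi^{\omega_0}]n=c_2\,\omega_0\times n$ with $c_2\neq0$.

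Given $w\in W_i^\perp$ with associated pressure $p$ from \eqref{eq:characterizationWPerp}, I would apply the reciprocal identity to $w$ and $\Phi^{V_0}$ on $D:=\R^3\setminus\overline{B_i}$: since both are homogeneous Stokes flows there with divergence-free velocities decaying at infinity, integrating by parts twice and discarding the flux at infinity gives
\[
  \int_{\partial B_i}\bigl(\sigma[\Phi^{V_0}]n\bigr)\cdot w\,\dd S=\int_{\partial B_i}\bigl(\sigma[w,p]n\bigr)\cdot\Phi^{V_0}\,\dd S.
\]
On the right, $\Phi^{V_0}\equiv V_0$ on $\partial B_i$ and $w$ is force free, so the right-hand side is $V_0\cdot\int_{\partial B_i}\sigma[w,p]n=0$; on the left, the traction is constant, so the left-hand side is $c_1V_0\cdot\int_{\partial B_i}w\,\dd S$. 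Since $V_0$ is arbitrary and $c_1\neq0$, this gives $\fint_{\partial B_i}w=0$. Repeating with $\Phi^{\omega_0}$, using $(\omega_0\times n)\cdot w=\omega_0\cdot(n\times w)$ and that $w$ is torque free, yields $\omega_0\cdot\int_{\partial B_i}n\times w\,\dd S=0$ for all $\omega_0$, hence $\int_{\partial B_i}n\times w\,\dd S=0$; by the identity $\int_{B_i}\curl w=\int_{\partial B_i}n\times w$ this is precisely $\fint_{B_i}\tfrac{\nabla w-(\nabla w)^T}{2}=0$.

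To conclude, let $w\in\dot H^1_\sigma(\R^3)$. Since $P_iw\in W_i$, the rigidity result forces $P_iw=V+\omega\times(x-X_i)$ in $B_i$ for some $V,\omega\in\R^3$. Applying the above to $Q_iw=w-P_iw\in W_i^\perp$, and using that $\omega\times(x-X_i)$ has zero average over the sphere $\partial B_i$ while $\curl\bigl(\omega\times(x-X_i)\bigr)=2\omega$, I obtain $V=\fint_{\partial B_i}P_iw=\fint_{\partial B_i}w$ and $2\omega=\fint_{B_i}\curl P_iw=\fint_{B_i}\curl w$, i.e.\ $\omega=\tfrac12\fint_{B_i}\curl w$.

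The one step I expect to require real care is the claim about the surface tractions in the first paragraph: I need not just the net force/torque of $\Phi^{V_0}$ and $\Phi^{\omega_0}$ but the fact that $\sigma[\Phi^{V_0}]n$ is \emph{constant} on $\partial B_i$ (so that pairing it with $w$ reproduces exactly the average $\fint_{\partial B_i}w$) and the analogous angular profile for the rotational mode; these are classical properties of the explicit translating- and rotating-sphere solutions, but they, together with sign bookkeeping and the justification that the reciprocal-identity boundary terms at infinity vanish (which follows from $w\in\dot H^1\hookrightarrow L^6$ and the decay of $w$, $\Phi^{V_0}$, $\Phi^{\omega_0}$), are where the proof is not purely formal.
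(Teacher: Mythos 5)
Your argument is correct, and it in fact supplies a proof where the paper gives none: Lemma \ref{lem:projectionInsideParticle} is simply quoted from \cite[Lemma 4.2]{NiethammerSchubert19}, so the only question is whether your route is sound, and it is. The two classical facts you lean on are true: the traction of the translating-sphere solution is the constant field $\sigma[\Phi^{V_0}]n=-\tfrac{3\mu}{2R_i}V_0$ on $\partial B_i$, and that of the rotating-sphere solution is $-3\mu\,\omega_0\times n$; with the characterization \eqref{eq:characterizationWPerp} (which is Lemma 4.1 of the cited reference, so there is no circularity) the reciprocal identity then gives exactly $\fint_{\partial B_i}w=0$ and $\int_{\partial B_i}n\times w=\int_{B_i}\curl w=0$, and the passage to $P_iw$ via $Q_iw\in W_i^\perp$, $\fint_{\partial B_i}\omega\times(x-X_i)=0$, $\curl(\omega\times(x-X_i))=2\omega$ is clean (with the displayed formula for $P_iw$ understood, as you say, inside $B_i$). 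The only stylistic difference from the natural proof behind the citation is that you go through the traction characterization of $W_i^\perp$ plus the Lorentz reciprocal theorem, which obliges you to control the flux at infinity and to interpret $\sigma[w,p]n$ as an $H^{-1/2}(\partial B_i)$ pairing; a slightly more economical variant is to use the Hilbert-space orthogonality directly, namely $0=(w,\Phi^{V_0})_{\dot H^1}=2\int_{\R^3\setminus B_i}ew:e\Phi^{V_0}$ since $\Phi^{V_0},\Phi^{\omega_0}\in W_i$ and $e\Phi^{V_0}=0$ in $B_i$, and then integrate by parts once in the exterior using the Stokes equations for the explicit test flows; this bypasses \eqref{eq:characterizationWPerp} and the force/torque-free hypotheses on $w$ altogether, while still resting on the same constant, respectively $\omega_0\times n$, traction profiles. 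Either way the decay bookkeeping you flag is routine (finite Dirichlet energy exterior Stokes fields decay at worst like $|x|^{-1}$, which already kills the boundary terms on large spheres), so your proof stands as written.
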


\begin{lem}[{\cite[Corollary 4.3]{NiethammerSchubert19}}] \label{lem:Q.scalar.product}
	There exists a universal constant $C>0$ such that all $w \in W_i^\perp$,
	\begin{align}
		\|w\|_{\dot H^1(\R^3)} \leq C \|e w\|_{L^2(B_i)}
	\end{align}
\end{lem}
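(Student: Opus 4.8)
The plan is to rescale to the unit ball, prove a Korn-type inequality on $B_i$ using the vanishing-average identities of Lemma \ref{lem:projectionInsideParticle}, and then transfer this to the global $\dot H^1$-seminorm by extending the restriction of $w$ to $B_i$ to a solenoidal field on all of $\R^3$ and exploiting that $Q_i$ is an orthogonal projection, hence a contraction. For the first reduction, observe that both sides of the claimed inequality scale like $R_i^{-1/2}$ under $w \mapsto w(X_i + R_i\,\cdot)$, and that this substitution carries $W_i^\perp$ onto the analogous space associated with the unit ball (the defining relations in \eqref{eq:characterizationWPerp} are scale-covariant for the Stokes system, and so are the identities in Lemma \ref{lem:projectionInsideParticle}). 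Hence I may assume $X_i = 0$ and $R_i = 1$, which also makes the universality of $C$ automatic; write $B := B_1(0)$.

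For the interior estimate, I would work on the closed subspace
\[
	X := \left\{ v \in H^1(B;\R^3) : \dv v = 0,\ \fint_{\partial B} v = 0,\ \fint_B \Skew \nabla v = 0 \right\},
\]
which contains $w|_B$ by Lemma \ref{lem:projectionInsideParticle}, and prove that $\|v\|_{H^1(B)} \le C\|ev\|_{L^2(B)}$ for all $v \in X$. This is the standard compactness argument: Korn's second inequality gives $\|v\|_{H^1(B)} \le C(\|ev\|_{L^2(B)} + \|v\|_{L^2(B)})$, so if the asserted bound failed there would be $v_n \in X$ with $\|v_n\|_{H^1(B)} = 1$ and $\|ev_n\|_{L^2(B)} \to 0$; by the compact embedding $H^1(B) \hookrightarrow L^2(B)$ and Korn applied to differences, $v_n$ converges (along a subsequence) in $H^1(B)$ to some $v \in X$ with $ev = 0$, hence $v$ is a rigid motion $v(x) = a + \omega \times x$, and the two vanishing averages force successively $\omega = 0$ and then $a = 0$, contradicting $\|v\|_{H^1(B)} = 1$.

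Finally, I would extend $w|_B$ to a divergence-free field $G \in \dot H^1_\sigma(\R^3)$ with bounded support and $\|\nabla G\|_{L^2(\R^3)} \le C\|w\|_{H^1(B)}$ — a standard $H^1$-extension of $w|_B$ into a bounded annulus around $B$, corrected by a Bogovskii field to restore $\dv G = 0$, which is admissible because $\int_{\partial B} w \cdot n = \int_B \dv w = 0$. Since $G = w$ on $B$, the field $G - w$ satisfies $e(G-w) = 0$ in $B$ and so belongs to $W_i$; thus $G = w + (G-w)$ is exactly the decomposition of $G$ into its components in $W_i^\perp$ and $W_i$, and therefore $Q_i G = w$. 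As $Q_i$ is an orthogonal projection on $\dot H^1_\sigma(\R^3)$, this gives
\[
	\|w\|_{\dot H^1(\R^3)} = \|Q_i G\|_{\dot H^1(\R^3)} \le \|G\|_{\dot H^1(\R^3)} \le C\|w\|_{H^1(B)} \le C\|ew\|_{L^2(B)},
\]
and undoing the rescaling yields the claim.

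The step I expect to carry the real weight is the interior Korn inequality on $X$ with a constant independent of the configuration: this is precisely where both conclusions of Lemma \ref{lem:projectionInsideParticle} enter, to eliminate the entire rigid-motion kernel (without them one only controls $v$ modulo rigid motions, which would not suffice). The rescaling and the solenoidal extension are routine.
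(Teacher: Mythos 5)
Your proof is correct. The paper itself does not prove this lemma but imports it from \cite{NiethammerSchubert19} (Corollary 4.3); your argument is essentially the same one that underlies that reference and that the paper reuses in the proof of Lemma \ref{lem:spectral.gap}: after rescaling, extend $w|_{B_i}$ (legitimately, thanks to the vanishing averages of Lemma \ref{lem:projectionInsideParticle}) to a compactly supported divergence-free field $G$ with $\|\nabla G\|_{L^2(\R^3)} \le C\|e w\|_{L^2(B_i)}$, observe that $G-w \in W_i$ so that $w = Q_i G$, and conclude from the fact that the orthogonal projection $Q_i$ is a contraction. The only difference is that you rebuild the extension estimate by hand (Korn's second inequality on the subspace fixed by the two mean-zero conditions, an $H^1$ extension into an annulus, and a Bogovskii correction using $\int_{\partial B} w\cdot n = 0$), whereas the paper would simply invoke Lemma \ref{lem:extest} with Remark \ref{rem:ext.est}, which packages exactly that statement; both routes are sound and give a constant independent of the particle.
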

\begin{rem}
	In particular, this Lemma implies that for all $w_1,w_2 \in H^1_\sigma(\R^3)$
	\begin{align}
			\|Q_i w_1\|_{\dot H^1(\R^3)} \leq C \|e w_1\|_{L^2(B_i)} \label{eq:norm.Q_iu},\\
		(w_1,Q_i w_2)_{\Hdot^1(\IR^3)} \leq C \|e w_1\|_{L^2(B_i)} \|e w_2\|_{L^2(B_i)} \label{eq:scalar.product.Q_iu},
	\end{align}
\end{rem}

Theorem \ref{th:L.bounded} easily follows from Lemma \ref{lem:Q.scalar.product}.
\begin{proof}[Proof of Theorem \ref{th:L.bounded}]
	Since $L$ is self-adjoint, we have\footnote{To be precise, we do not know a priori that $L$ is well-defined on $\dot H^1(\R^3)$. However, it is straightforward to resolve this issue by approximation $L$ through taking into account only finitely many particles. We refer to the proof of \cite[Proposition 2.7]{HoferVelazquez18} for the details.}
	\begin{align}
		\|L\|_{\dot H^1_\sigma \to \dot H^1_\sigma} = \sup_{\substack{w \in \dot H^1_\sigma(\R^3) \\ \|w\|_{\dot H^1(\R^3)} = 1}} (Lw,w)_{\Hdot^1(\IR^3)}.   		
	\end{align}
	By \eqref{eq:scalar.product.Q_iu}, we have
	\begin{align}
		(Lw,w)_{\Hdot^1(\IR^3)} = \sum_i (Q_i w , w)_{\Hdot^1(\IR^3)} \leq C \sum_i \|e w \|^2_{L^2(B_i)} \leq C.
	\end{align}
	This finishes the proof. 
\end{proof}

Corollary \ref{cor:L^2.no.rate} is a direct consequence of Theorem \ref{th:L.bounded} and the spectral theorem for bounded self-adjoint operators. We refer to 
\cite[Proposition 2.13]{HoferVelazquez18} for the details.

In order to quantify the convergence rate, we need to prove that $L$ has a spectral gap, i.e., that $L$ is strictly positive on $W^\perp$.
This is proved in the following Lemma under the additional condition \eqref{eq:theta.separation}.
Theorem \ref{th:conv.relaxed} is a direct consequence of Theorem \ref{th:L.bounded} and Lemma \ref{lem:spectral.gap} below. For the details of the proof we refer again to \cite[Proposition 2.13]{HoferVelazquez18}.
\begin{lem} \label{lem:spectral.gap}
	Assume that \eqref{eq:theta.separation} holds. Then, there exists a constant $c > 0$ depending only on $\theta$ from \eqref{eq:theta.separation} such that 
	\begin{align}
		(Lw, w)_{\Hdot^1(\IR^3)} \geq c \|w\|_{\dot H^1(\R^3)}^2 \qquad \text{for all } w \in W^\perp
	\end{align}
\end{lem}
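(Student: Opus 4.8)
The idea is that the spectral gap for $L$ on $W^\perp$ is \emph{dual} to a Korn‑type inequality, and the separation \eqref{eq:theta.separation} is exactly what makes the latter scale invariant. First, since $Q_i$ is the orthogonal projection onto $W_i^\perp$ and $Q_iw\perp W_i\ni P_iw$, one has $(Q_iw,w)_{\Hdot^1}=\|Q_iw\|_{\Hdot^1(\R^3)}^2$, hence $(Lw,w)_{\Hdot^1}=\sum_i\|Q_iw\|_{\Hdot^1(\R^3)}^2$. By Lemma \ref{lem:projectionInsideParticle}, $P_iw$ is a (global) rigid body motion, so $e(P_iw)=0$ on all of $\R^3$ and therefore $e(Q_iw)=ew$ everywhere. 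Restricting the Dirichlet integral of $Q_iw$ to $B_{\theta R_i}(X_i)$ and using $|\nabla g|^2\geq|eg|^2$ pointwise gives $\|Q_iw\|_{\Hdot^1(\R^3)}^2\geq\|ew\|_{L^2(B_{\theta R_i}(X_i))}^2$. Since the balls $B_{\theta R_i}(X_i)$ are pairwise disjoint by \eqref{eq:theta.separation}, summing yields
\[
	(Lw,w)_{\Hdot^1}\;\geq\;\|ew\|_{L^2(K_\theta)}^2,\qquad K_\theta:=\bigcup_i B_{\theta R_i}(X_i).
\]

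It therefore suffices to prove the geometric inequality: there is $C=C(\theta)$ such that every $w\in\Hdot^1_\sigma(\R^3)$ admits some $\hat w\in W$ with $\|w-\hat w\|_{\Hdot^1(\R^3)}^2\leq C\|ew\|_{L^2(K_\theta)}^2$. Indeed, applied to $w\in W^\perp$ (for which $\operatorname{dist}(w,W)=\|w\|$) this gives $\|w\|^2\leq C(Lw,w)_{\Hdot^1}$, which is the claim with $c=1/C$. To construct $\hat w$: for each $i$ let $r_i\in\mathcal R$ be the rigid body motion that is the $L^2(B_{\theta R_i}(X_i))$‑projection of $w$ onto the space of rigid motions; the scale‑invariant Korn--Poincar\'e inequality on balls gives $R_i^{-1}\|w-r_i\|_{L^2(B_{\theta R_i}(X_i))}+\|\nabla(w-r_i)\|_{L^2(B_{\theta R_i}(X_i))}\leq C(\theta)\|ew\|_{L^2(B_{\theta R_i}(X_i))}$. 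Choose cutoffs $\chi_i$ with $\chi_i=1$ on $B_{R_i}(X_i)$, $\chi_i=0$ off $B_{\theta R_i}(X_i)$ and $|\nabla\chi_i|\leq C(\theta)/R_i$ (possible precisely because $\theta>1$, which forces the annuli $A_i:=B_{\theta R_i}(X_i)\setminus\overline{B_{R_i}(X_i)}$ to have relative width bounded below). Set $\tilde w:=w+\sum_i\chi_i(r_i-w)$; this equals $r_i$ on $B_{R_i}(X_i)$, equals $w$ off $K_\theta$, and $\dv\tilde w=\sum_i\nabla\chi_i\cdot(r_i-w)$ is supported in the disjoint annuli $A_i$ with mean zero on each $A_i$ (since $\int_{\partial B_{R_i}}(r_i-w)\cdot n=\int_{B_{R_i}}\dv(r_i-w)=0$). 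Finally apply the Bogovskii operator on each $A_i$ — its $L^2\to\Hdot^1$ constant is scale invariant and, after rescaling $A_i$ to $B_\theta\setminus\overline{B_1}$, depends only on $\theta$ — to obtain $\psi_i\in H^1_0(A_i;\R^3)$ with $\dv\psi_i=-\dv\tilde w$ on $A_i$ and $\|\nabla\psi_i\|_{L^2}\leq C(\theta)\|ew\|_{L^2(B_{\theta R_i}(X_i))}$, and put $\hat w:=\tilde w+\sum_i\psi_i$. Then $\hat w$ is divergence free, coincides with $r_i$ on each $B_{R_i}(X_i)$ (the $\psi_j$ have supports in pairwise disjoint sets away from $B_{R_i}(X_i)$), hence $e\hat w=0$ on $K$ and $\hat w\in W$; and since $w-\hat w=-\sum_i\big(\chi_i(r_i-w)+\psi_i\big)$ is a sum of terms supported in the disjoint balls $B_{\theta R_i}(X_i)$, $\|w-\hat w\|_{\Hdot^1(\R^3)}^2=\sum_i\|\nabla(\chi_i(r_i-w)+\psi_i)\|_{L^2(B_{\theta R_i}(X_i))}^2\leq C(\theta)\sum_i\|ew\|_{L^2(B_{\theta R_i}(X_i))}^2=C(\theta)\|ew\|_{L^2(K_\theta)}^2$. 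Finiteness $\|ew\|_{L^2(K_\theta)}\leq\|\nabla w\|_{L^2(\R^3)}<\infty$ guarantees $\hat w\in\Hdot^1_\sigma(\R^3)$.

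The main obstacle is the last step: one must keep every constant scale invariant, and this is exactly where $\theta>1$ is used — it makes each annulus $A_i$ have a shape independent of $i$, so that both $\|\nabla\chi_i\|_\infty R_i$ and the Bogovskii constant on $A_i$ depend only on $\theta$ — and one must arrange the corrections $\psi_i$ on pairwise disjoint supports so that the $\ell^2$‑sum of the local estimates reassembles into a global $\Hdot^1$ bound with the same constant. A minor point to check along the way is that $\hat w$ genuinely lies in $W$ (i.e. in the closure of $C_c^\infty$ under $\|\nabla\cdot\|_{L^2}$), which follows from the uniform estimates together with the fact that $w-\hat w$ is divergence free and has finite $\Hdot^1$ norm.
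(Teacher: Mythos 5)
Your overall architecture (the identity $(Lw,w)=\sum_i\|Q_iw\|^2_{\dot H^1}$, plus a distance estimate $\operatorname{dist}(w,W)^2\le C\|ew\|^2$ over a neighbourhood of the particles, using $\operatorname{dist}(w,W)=\|w\|$ on $W^\perp$) is close in spirit to the paper's proof, but the first half contains a genuine error that the second half cannot absorb. You claim that $P_iw$ is a \emph{global} rigid body motion, hence $e(Q_iw)=ew$ on all of $\R^3$, and in particular on $B_{\theta R_i}(X_i)$, which gives you $(Lw,w)\ge\|ew\|^2_{L^2(K_\theta)}$. This is false: Lemma \ref{lem:projectionInsideParticle} identifies $P_iw$ with the rigid motion $V+\omega\times(x-X_i)$ only \emph{inside} $B_i$ (a nonzero global rigid motion is not even an element of $\dot H^1_\sigma(\R^3)$); outside $B_i$, $P_iw=w-Q_iw$ with $Q_iw$ solving an exterior Stokes problem, so $e(Q_iw)\neq ew$ on the annulus $B_{\theta R_i}(X_i)\setminus B_i$ in general. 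Concretely, $Q_iw$ depends on $w$ only through $ew|_{B_i}$ (it vanishes whenever $ew=0$ in $B_i$, whatever $ew$ does on the annulus), so the per-particle bound $\|Q_iw\|^2_{\dot H^1(\R^3)}\ge\|ew\|^2_{L^2(B_{\theta R_i}(X_i))}$ cannot hold as stated; the correct elementary bound is only $(Lw,w)\ge\sum_i\|ew\|^2_{L^2(B_i)}$. The summed inequality with $K_\theta$ is true \emph{a posteriori} for $w\in W^\perp$, but only as a consequence of the spectral gap itself, so your argument for it is where the proof breaks.

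The mismatch matters because your second half genuinely needs the enlarged balls: in $\nabla\bigl(\chi_i(r_i-w)\bigr)$ the term $\chi_i\nabla(r_i-w)$ lives on the annulus and is controlled by Korn--Poincar\'e only through $\|ew\|_{L^2(B_{\theta R_i}(X_i))}$, so your construction yields $\operatorname{dist}(w,W)^2\le C(\theta)\|ew\|^2_{L^2(K_\theta)}$, not $\le C\|ew\|^2_{L^2(\cup_iB_i)}$. To close the gap you must strengthen the distance estimate to the actual particles: replace the cutoff $\chi_i(r_i-w)$ by a divergence-free field supported in $B_{\theta R_i}(X_i)$ that \emph{equals} $w-r_i$ on $B_i$ and has $\dot H^1$ norm bounded by $C(\theta)\|ew\|_{L^2(B_i)}$ --- this is exactly the extension operator of Lemma \ref{lem:extest} (your Bogovskii correction on the annuli is part of how such an extension is built, but one must extend the boundary values of $(w-r_i)|_{B_i}$ rather than cut off $w$ itself). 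Once this is done, your proof becomes essentially the paper's: there, $w\in W^\perp$ is characterized as the minimal-norm element among fields with prescribed symmetric gradient on the $B_i$, and the competitor $\varphi=\sum_iE\bigl(Q_iw|_{B_i}\bigr)$ (supported in the disjoint balls $B_{\theta R_i}(X_i)$, thanks to \eqref{eq:theta.separation}) satisfies $\|\varphi\|^2\le C\sum_i\|eQ_iw\|^2_{L^2(B_i)}\le C(Lw,w)$, which is the dual formulation of your $\hat w=w-\varphi\in W$.
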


\begin{proof}
	Let  $w \in W^\perp$. We observe that $w$ is the function of minimal norm in 
	\begin{align}
		X_{w} := \{ \varphi \in \dot H^1(\R^3) \colon e \varphi = e w \text{ in } B_i \text{ for all } i \in I\}.
 	\end{align}
 	Indeed, let $\varphi \in X_w$, then $e(w- \varphi) = 0$ in all the particles.
 	Therefore, $w - \varphi \in W$. Thus $w = Q \varphi$, where $Q$ is the orthogonal projection to $W^\perp$. In particular
 	\begin{align}
 		\|\varphi\|_{\dot H^1(\R^3)}^2 = \|w \|_{\dot H^1(\R^3)}^2 + \|\varphi-w\|_{\dot H^1(\R^3)}^2.
 	\end{align}
 	
 	We apply Lemma \ref{lem:extest} below to $Q_i w$ (restricted to $B_i$) to obtain functions $\varphi_i \in \dot H^1(\R^3)$ such that
 	$\supp \varphi_i \subset B_{\theta R_i}(X_i)$,  $\varphi_i = Q_i w$ in $B_i$, and $\|\varphi_i\|_{\dot H^1(\R^3)} \leq C  \|e Q_i w\|_{L^2(B_i)} \leq C   \|Q_i w\|_{\dot H^1(\R^3)}$.
 	Since the balls $B_{\theta R_i}(X_i)$ are disjoint by assumption and $e \varphi_i = e Q w_i = e w_i$ in $B_i$, we deduce
 	that the function $\varphi := \sum \varphi_i$ is an element of $X_w$.
 	Hence, we may estimate
 	\begin{equation}
 	 \label{eq:conclusion.spectral.gap}
	\begin{aligned} 	
 		(Lw ,w)_{\Hdot^1(\IR^3)} &= \sum_i (Q_i w , w) = \sum_i \|Q_i w\|_{\dot H^1(\R^3)}^2  \\
 		&\geq c \sum_i \|\varphi_i \|_{\dot H^1(\R^3)}^2
 		= c \|\varphi \|_{\dot H^1(\R^3)}^2 \geq  c \|w\|_{\dot H^1(\R^3)}^2,
 	\end{aligned}
 	\end{equation}
 	which concludes the proof.
\end{proof}

\begin{rem} \label{rem:equivalent.norm}
	The proof of the Lemma shows that on $W^{\perp}$ an equivalent norm is given by $\|e \cdot\|_{L^2(\cup_i B_i)}$ with
	\begin{align}
	 	\|e w\|_{L^2(\cup_i B_i)} \leq \|w\|_{\dot H^1(\R^3)} \leq C \|e w\|_{L^2(\cup_i B_i)},
	\end{align}
	where the constant $C$ depends only on $\theta$. Indeed, the second inequality follows immediately from \eqref{eq:conclusion.spectral.gap} and \eqref{eq:norm.Q_iu}.
\end{rem}

For the proof of Lemma \ref{lem:spectral.gap}, we used the following lemma.

\begin{lem}[{\cite[Lemma 4.6]{NiethammerSchubert19}}]
	\label{lem:extest}
	Let $1  < q < \infty$, $\theta > 1$, $ r>0 $, and $ x \in \IR^3$,  
	and assume $w \in  W^{1,q}_\sigma(B_r(x))$ satisfies
	\begin{align}
			\fint_{\partial B_r(x)} w= \fint_{B_r(x)} \curl w = 0.
	\end{align}
	Then, there exists an extension $ E w = E_{\theta,r,x,q} w \in W^{1,q}_{\sigma}(\R^3)$ with $\supp E w \subset B_{\theta r}(x)$ such that $E w = w $ in $B_r(x)$ and
	\begin{equation}
		\label{eq:extest}
		\| \nabla E w \|_{L^q(B_{\theta r}(x))} \leq C \| e w \|_{L^q(B_r(x))},
	\end{equation}
	where the constant $ C $ depends only on $\theta$ and $q$.
\end{lem}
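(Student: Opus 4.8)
The plan is to pass to a unit ball by scaling and then build $Ew$ in two moves: a plain (not solenoidal) Sobolev extension of $w$, cut off to $B_{\theta r}(x)$, followed by a correction of its divergence by means of the Bogovskii operator on the annulus. First I would observe that under the rescaling $w \mapsto w(x + r\,\cdot)$ the two hypotheses $\fint_{\partial B_r(x)} w = \fint_{B_r(x)} \curl w = 0$ are invariant and both sides of \eqref{eq:extest} are homogeneous of the same degree $r^{1-3/q}$; so it suffices to treat $x = 0$, $r = 1$, and I abbreviate $B_\rho := B_\rho(0)$.

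The second step is a Korn inequality to bound $w$ itself. Its quotient form on $W^{1,q}(B_1)$ gives a rigid motion $\xi(y) = a + \Omega y$, with $\Omega$ skew-symmetric, such that $\norm{w - \xi}_{W^{1,q}(B_1)} \le C \norm{ew}_{L^q(B_1)}$, $C = C(q)$. Applying the two bounded linear functionals $\varphi \mapsto \fint_{\partial B_1} \varphi$ and $\varphi \mapsto \fint_{B_1} \curl \varphi$ — which annihilate $w$ by hypothesis and which, restricted to rigid motions, send $\xi = a + \Omega y$ to $a$ and to $2\omega$ with $\omega$ the axial vector of $\Omega$, hence form a linear isomorphism of the space of rigid motions — to $\xi = w - (w - \xi)$ yields $\abs{a} + \abs{\Omega} \le C \norm{ew}_{L^q(B_1)}$, and hence
\begin{align}
	\norm{w}_{W^{1,q}(B_1)} \le C \norm{ew}_{L^q(B_1)}, \qquad C = C(q),
\end{align}
which is the $W^{1,q}$-analogue of Lemma~\ref{lem:projectionInsideParticle}.

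The third step is the extension together with the divergence correction. Let $\mathcal E \colon W^{1,q}(B_1) \to W^{1,q}(\R^3)$ be a standard bounded Sobolev extension operator with $\mathcal E w = w$ on $B_1$, choose $\chi \in C_c^\infty(B_\theta)$ with $\chi \equiv 1$ on $B_{(1+\theta)/2}$ and $\abs{\nabla \chi} \le C(\theta - 1)^{-1}$, and set $\tilde w := \chi\, \mathcal E w$. Then $\supp \tilde w \subset B_\theta$, $\tilde w = w$ on $B_1$, and $\norm{\tilde w}_{W^{1,q}(\R^3)} \le C(\theta, q) \norm{ew}_{L^q(B_1)}$ by the previous step. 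Since $\tilde w = w$ is solenoidal on $B_1$ and vanishes outside $B_\theta$, the function $g := \dv \tilde w$ is supported in the bounded smooth annulus $A := B_\theta \setminus \overline{B_1}$ and satisfies $\int_A g = \int_{\R^3} \dv \tilde w = 0$. The Bogovskii operator $\Bog_A$ on $A$ then produces $z := \Bog_A g$ with $z \in W^{1,q}(A)$, $z = 0$ outside $A$, $\dv z = g$, and $\norm{\nabla z}_{L^q(A)} \le C(\theta, q) \norm{g}_{L^q(A)} \le C(\theta, q) \norm{ew}_{L^q(B_1)}$. Extending $z$ by zero and setting $Ew := \tilde w - z$, I obtain $\dv Ew = 0$ on $\R^3$, $\supp Ew \subset B_\theta$, $Ew = \tilde w = w$ on $B_1$ (since $z \equiv 0$ there), and $\norm{\nabla Ew}_{L^q(B_\theta)} \le \norm{\nabla \tilde w}_{L^q(\R^3)} + \norm{\nabla z}_{L^q(A)} \le C(\theta, q) \norm{ew}_{L^q(B_1)}$; undoing the scaling gives \eqref{eq:extest} for general $r, x$.

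The main obstacle is the divergence-correction step: one must check that the annulus $A$ is an admissible domain for the Bogovskii operator (which it is, being bounded and smooth, hence Lipschitz), that the compatibility condition $\int_A g = 0$ holds, and — decisively for the statement — that $z$ is supported in $\overline A$, so that the identity $Ew = w$ on $B_1$ is not destroyed by the correction. The remaining ingredients (scaling, the Korn inequality, and a Sobolev extension operator on a ball) are classical, and one only has to keep track that the constants depend on $\theta$ solely through the geometry of $A$ and the cutoff $\chi$, and on $q$ through Korn and Bogovskii.
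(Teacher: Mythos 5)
Your proof is correct. The paper itself does not prove this lemma---it quotes it from \cite{NiethammerSchubert19} (Lemma 4.6 there, stated for $q=2$, with Remark \ref{rem:ext.est} noting that the general case is analogous)---and your construction (reduction to $r=1$, $x=0$ by scaling, the Korn--Poincar\'e inequality under the normalization $\fint_{\partial B_1} w = \fint_{B_1}\curl w = 0$, a cut-off Sobolev extension, and a Bogovskii correction of the divergence on the annulus, all valid for $1<q<\infty$ with constants depending only on $\theta$ and $q$) is exactly the standard argument behind the cited result, so it matches the intended proof in all essentials.
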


\begin{rem} \label{rem:ext.est}
	\begin{enumerate}[(i)]
	\item 
	Lemma 4.6 in \cite{NiethammerSchubert19} only covers the case $q=2$. The general case is analogous, though.
	\item If the condition $\fint_{\partial B_r(x)} \curl w = 0$ is dropped, one gets the same assertion with $\nabla w$ instead of $ew$ on the right-hand side of \eqref{eq:extest}.
	\end{enumerate}
\end{rem}

\section{Proof of Theorem \ref{th:L^q.rate.phi} for \texorpdfstring{$q=2$}{q=2}}

\label{sec:L^2}

 For the sake of a simpler presentation, we will assume in the following that all the radii are identical, i.e., $R_i = R_{\max}$ for all $i \in I$.
It is not difficult to check that all the estimates that we prove are monotone in the particle radii. Therefore, the general case of different particle radii is proved in the same way. We emphasize that this argument is only valid, since the quantity $\phi_0$ from \eqref{def:phi} only depends on $R_{\max}$, whereas the optimal factor $\theta$ in \eqref{eq:theta.separation} is sensitive to the individual particle radii.

Since we assume that all the particles have the same radius $R_{\max}$, it follows from a scaling argument that it suffices to consider the case $R=1$ for the proof of Theorem \ref{th:L^q.rate.phi}. We will therefore assume $R_i=1$ for all $i \in I$ in Sections \ref{sec:L^2} and \ref{sec:L^p}.

\medskip

The convergence results proved in the previous section are based on bounds derived from variational principles, that hold for general particle distributions and the application of the abstract spectral theorem.
In this section, we prove convergence results in $\dot{H}^1(\R^3)$ for the unrelaxed method of reflections for sufficiently dilute particle configurations. For obtaining such convergence results, and to quantify them for small particle volume fraction $\phi_0$, a more detailed study of
the operator $L$ is needed that goes beyond the bounds and the spectral gap found in the previous section.
More precisely, we will prove that the operator $L$ has the following structure.

\begin{prop} \label{pro:char.L}
	There exists an operator $T \colon W^\perp \to W^\perp$ such that $\|T\| \leq C$, where $C$ depends only on $\theta$ and
	\begin{align}  \label{eq:def.T}
		L = (1+\phi_0 T)P_{W^\perp}.
	\end{align}
\end{prop}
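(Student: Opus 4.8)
The plan is to reduce the statement to a single operator‑norm estimate on $W^\perp$ and then prove that estimate by a Calderon‑Zygmund argument. First, note that each $Q_i$ annihilates $W$ (if $w\in W\subseteq W_i$ then $Q_iw=0$) and maps $\dot H^1_\sigma(\R^3)$ into $W_i^\perp\subseteq W^\perp$; hence $L$ annihilates $W$ and takes values in $W^\perp$, so $L=L\,P_{W^\perp}$. It therefore suffices to show
\[
 \bigl\| L|_{W^\perp}-\Id_{W^\perp}\bigr\|_{\dot H^1\to\dot H^1}\le C\phi_0 ,
\]
and to put $T:=\phi_0^{-1}\bigl(L|_{W^\perp}-\Id_{W^\perp}\bigr)\colon W^\perp\to W^\perp$. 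Observe that \eqref{eq:theta.separation} (with the normalization $R_i=1$) forces $\dmin\ge 2\theta$, so $\phi_0\le(2\theta)^{-3}\le1$; when $\phi_0$ is bounded below, the displayed estimate already follows from Theorem \ref{th:L.bounded}, so the content lies in the regime of small $\phi_0$.

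Second, I would make the algebraic structure explicit via the superposition operator. Let $\mathcal H$ be the Hilbert‑space direct sum $\bigoplus_i W_i^\perp$ and let $S\colon\mathcal H\to W^\perp$, $S\bigl((g_i)_i\bigr):=\sum_i g_i$. By Theorem \ref{th:L.bounded}, $\|S^*w\|_{\mathcal H}^2=\sum_i\|Q_iw\|^2=\langle Lw,w\rangle\le\|L\|\,\|w\|^2$, so $S$ is bounded, $S^*w=(Q_iw)_i$, and $L|_{W^\perp}=SS^*$ (which incidentally re‑proves that $L$ is nonnegative and self‑adjoint). On $\mathcal H$ one computes, using $Q_ig_i=g_i$ for $g_i\in W_i^\perp$,
\[
 (S^*Sg)_i=Q_i\Bigl(\sum_j g_j\Bigr)=g_i+\sum_{j\ne i}Q_ig_j=\bigl((\Id_{\mathcal H}+B)g\bigr)_i,\qquad (Bg)_i:=\sum_{j\ne i}Q_ig_j .
\]
Since $SS^*$ and $S^*S$ have the same spectrum away from $0$ and $S^*$ is injective on $W^\perp$ (its kernel is $W\cap W^\perp=\{0\}$), once $\|B\|<1$ we obtain $0\notin\sigma(SS^*)$ and $\sigma(L|_{W^\perp})=\sigma(\Id+B)\subseteq[1-\|B\|,1+\|B\|]$; hence the whole proposition reduces to the single estimate $\|B\|_{\mathcal H\to\mathcal H}\le C\phi_0$ (and in the case $\|B\|\ge1$, which forces $\phi_0$ to be bounded below, one falls back on the crude bound $\|L|_{W^\perp}-\Id\|\le\|L\|+1\lesssim\phi_0$).

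Third — and this is the heart of the argument — I would estimate $B$. By Lemma \ref{lem:Q.scalar.product} and the identity $e\,Q_ig_j=e\,g_j$ on $B_i$ one has $\|(Bg)_i\|_{\dot H^1}\le C\bigl\|\sum_{j\ne i}e\,g_j\bigr\|_{L^2(B_i)}$. Each $g_j\in W_j^\perp$ is, by \eqref{eq:characterizationWPerp}, a Stokes flow in $\R^3\setminus\overline{B_j}$ with vanishing net force and torque on $B_j$, so its leading far field is a stresslet; interior estimates then give, for $x\in B_i$ (so that $|x-X_j|\gtrsim\dmin$),
\[
 e\,g_j(x)=\mathbb K(x-X_j)[D_j]+O\!\Bigl(\tfrac{\|g_j\|_{\dot H^1}}{|x-X_j|^{4}}\Bigr),\qquad |D_j|\le C\|g_j\|_{\dot H^1},
\]
where $\mathbb K$ is the tensor‑valued kernel obtained by symmetrizing the gradient of the Oseen tensor: it is homogeneous of degree $-3$, smooth off the origin, and — the crucial point — has vanishing mean over spheres, i.e.\ it is a genuine Calderon‑Zygmund kernel. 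The naive bound $\|(Bg)_i\|\le C\sum_{j\ne i}\|g_j\|/|X_i-X_j|^{3}$ is useless, because $\sum_{j\ne i}|X_i-X_j|^{-3}$ diverges logarithmically; instead one recognizes $\sum_{j\ne i}\mathbb K(X_i-X_j)[D_j]$ as a discretization of the singular integral $\mathbb K*\nu$ with $\nu:=\sum_j D_j\,\delta_{X_j}$. Mollifying at scale $\dmin$ with a radial bump $\eta$ supported in $B_{1/4}$, the $\dmin$‑separation of the $X_j$ makes the supports of the $\eta_{\dmin}(\cdot-X_j)$ disjoint, whence $\|\nu*\eta_{\dmin}\|_{L^2}^2=c\,\dmin^{-3}\sum_j|D_j|^2$; by Plancherel (the Fourier symbol of the Calderon‑Zygmund kernel $\mathbb K$ is bounded) this is preserved under $\mathbb K*\,$; the vanishing spherical mean kills the self‑term $(\mathbb K*\eta_{\dmin})(0)$, so $(\mathbb K*\nu*\eta_{\dmin})(X_i)=\sum_{j\ne i}\mathbb K(X_i-X_j)[D_j]+O\bigl(\dmin\sum_{j\ne i}|D_j|/|X_i-X_j|^{4}\bigr)$; and because the balls $B_{\dmin/2}(X_i)$ are disjoint, a local mean‑value/Poincaré estimate converts the discrete sum $\sum_i|(\mathbb K*\nu*\eta_{\dmin})(X_i)|^2$ into $C\dmin^{-3}\|\mathbb K*\nu*\eta_{\dmin}\|_{L^2}^2$ up to lower‑order terms. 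Putting this together with $|B_i|\sim1$ and $|D_j|\le C\|g_j\|_{\dot H^1}$ yields
\[
 \|Bg\|_{\mathcal H}^2=\sum_i\|(Bg)_i\|_{\dot H^1}^2\le C\,\dmin^{-6}\sum_j\|g_j\|_{\dot H^1}^2+(\text{error})=C\phi_0^2\|g\|_{\mathcal H}^2 ,
\]
the error terms being dominated by absolutely convergent sums $\dmin^{2}\sum_{j\ne i}|X_i-X_j|^{-4}\lesssim\dmin^{-2}$ (yielding powers $\dmin^{-8}\le\dmin^{-6}$). Hence $\|B\|\le C\phi_0$, which completes the proof.

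The main obstacle is precisely the criticality of the $|x-X_j|^{-3}$ decay of the dipole field $e\,g_j$: summing the one‑particle contributions term by term over the $\dmin$‑separated centres diverges, and the estimate closes only because those contributions assemble into a discretization of an honest Calderon‑Zygmund singular integral. The three delicate points are (i) isolating and bounding the stresslet moment $D_j$ together with its $|x|^{-4}$ remainder, (ii) the self‑interaction term, which drops out only by virtue of the vanishing spherical mean of $\mathbb K$, and (iii) the passage between the discrete $\ell^2$ sum over the particle centres and the continuous $L^2$ norm of $\mathbb K*\nu*\eta_{\dmin}$, which is where the separation hypothesis is genuinely used.
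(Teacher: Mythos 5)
Your core estimate is the paper's: after normalizing $R_i=1$, the gain of $\phi_0$ comes from splitting each single-particle field into its explicit stresslet (simple dipole) part plus a faster-decaying remainder, summing the remainders brutally against $\sum_{j\neq i}|X_i-X_j|^{-4}\lesssim \dmin^{-4}$, and treating the stresslet sum as a discretization of a Calderon--Zygmund operator applied to a measure mollified at scale $\dmin$, with kernel-difference errors and a removed self-interaction. This is exactly the paper's decomposition $Q_j=Q_j^d+Q_j^q$ (Lemmas \ref{lem:dipole.char} and \ref{lem:decay.dipole.quadrupole}) and its terms $A_1^i,A_2^i,A_3^i$. What differs is the wrapper. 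The paper works directly with $v\in W^\perp$, uses that $\|e\cdot\|_{L^2(\cup_i B_i)}$ is an equivalent norm on $W^\perp$ (Remark \ref{rem:equivalent.norm}, resting on Lemma \ref{lem:spectral.gap} and the extension lemma), and proves $\|e(1-L)v\|_{L^2(\cup_i B_i)}\leq C\phi_0\|ev\|_{L^2(\cup_i B_i)}$, which bounds $T$ at once. You instead pass to $\mathcal{H}=\bigoplus_i W_i^\perp$, write $L|_{W^\perp}=SS^*$ and $S^*S=\Id+B$, and reduce everything to $\|B\|\leq C\phi_0$; this avoids invoking Remark \ref{rem:equivalent.norm} (coercivity comes out of $\|B\|<1$) at the price of an operator-theoretic transfer step.

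That transfer step is the one place where your justification, as written, does not suffice: injectivity of $S^*$ on $W^\perp$ does not exclude $0$ from $\sigma(SS^*|_{W^\perp})$ in infinite dimensions (it only excludes an eigenvalue), and without excluding $0$ you would only get $\sigma(L|_{W^\perp}-\Id)\subseteq\{-1\}\cup[-\|B\|,\|B\|]$, hence no norm bound. The repair is standard and uses only what you prove anyway: $S^*S=\Id+B\geq(1-\|B\|)\Id$ gives $\|Sg\|^2\geq(1-\|B\|)\|g\|_{\mathcal H}^2$, so $S$ is bounded below and has closed range; since the closed span of the $W_i^\perp$ equals $(\cap_i W_i)^\perp=W^\perp$, the range of $S$ is dense, hence $S\colon\mathcal H\to W^\perp$ is onto, $S^*$ is bounded below on $W^\perp$, and $SS^*|_{W^\perp}$ is invertible; self-adjointness then yields $\|L|_{W^\perp}-\Id\|\leq\|B\|$. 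Two smaller points of rigor: the bound $\|(Bg)_i\|\leq C\|\sum_{j\neq i}eg_j\|_{L^2(B_i)}$ follows from \eqref{eq:norm.Q_iu} applied to $Q_i(\sum_{j\neq i}g_j)$ (the identity $eQ_ig_j=eg_j$ on $B_i$ is true but not the relevant fact), and the pointwise identity $(\mathbb K*\nu*\eta_{\dmin})(X_i)=\sum_j(\mathbb K*\eta_{\dmin})(X_i-X_j)[D_j]$ needs care because the sum need not converge absolutely (critical decay $|X_i-X_j|^{-3}$); the paper sidesteps this by averaging over $B_{\dmin/4}(X_i)$ and keeping the singular support away, and you should do the same in your mean-value/Poincar\'e step. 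With these adjustments your argument goes through and gives the same constants' dependence as the paper.
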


Theorem \ref{th:L^q.rate.phi} in the case $q=2$ is a direct consequence of this proposition. For future reference, we give the statement of this result.

\begin{cor} \label{co:L^2.rate.phi}
	There exists a constant $C$ which depends only on $\theta$ such that for all $u \in \dot H^1(\R^3)$
	\begin{align}
		\|(1-L)^k u - P_W u\|_{\dot H^1(\R^3)} \leq (C \phi_0)^k \|P_{W^\perp} u\|_{\dot H^1(\R^3)}
	\end{align}
\end{cor}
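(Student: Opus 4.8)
The plan is to deduce the corollary directly from the structural identity \eqref{eq:def.T} of Proposition \ref{pro:char.L}; the substance of the statement already lies in that proposition, so what remains is bookkeeping with the orthogonal splitting $\dot H^1_\sigma(\R^3) = W \oplus W^\perp$. First I would record the elementary fact that $L$ annihilates $W$: since $W = \bigcap_i W_i$, every $w \in W$ satisfies $P_i w = w$, hence $Q_i w = 0$ for all $i$ and $Lw = 0$. Combined with Proposition \ref{pro:char.L} --- which in particular asserts $L(W^\perp) \subseteq W^\perp$ --- this shows that $1-L$ leaves both $W$ and $W^\perp$ invariant: it is the identity on $W$, while on $W^\perp$, where $P_{W^\perp}$ acts as the identity, \eqref{eq:def.T} gives
\[
	(1-L)|_{W^\perp} = 1 - (1 + \phi_0 T) = -\phi_0 T .
\]

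Next I would split $u = P_W u + P_{W^\perp} u$ and exploit this invariance. A one-line induction on $k$ then gives
\[
	(1-L)^k u = P_W u + \bigl((1-L)|_{W^\perp}\bigr)^k P_{W^\perp} u = P_W u + (-\phi_0)^k\, T^k P_{W^\perp} u .
\]
Subtracting $P_W u$ (which equals $Pu = v$) and taking the $\dot H^1$-norm,
\[
	\|(1-L)^k u - P_W u\|_{\dot H^1(\R^3)} = \phi_0^k\, \|T^k P_{W^\perp} u\|_{\dot H^1(\R^3)} \le \phi_0^k \|T\|^k \|P_{W^\perp} u\|_{\dot H^1(\R^3)} \le (C\phi_0)^k \|P_{W^\perp} u\|_{\dot H^1(\R^3)},
\]
where $C = \|T\|$ is the constant from Proposition \ref{pro:char.L}, depending only on $\theta$. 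This is precisely the asserted estimate.

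There is no genuine obstacle at this level: the only real work --- the construction of $T$ and the uniform bound $\|T\| \le C(\theta)$ --- is carried out in Proposition \ref{pro:char.L}, and the invariance $L(W^\perp) \subseteq W^\perp$ used above is exactly why that proposition is phrased with $T$ acting on $W^\perp$. For completeness I would close with a short remark explaining how Corollary \ref{co:L^2.rate.phi} yields the $q = 2$ case of Theorem \ref{th:L^q.rate.phi}: since $(1-L)^k u - P_W u \in W^\perp$ and $e P_W u = 0$ on $\cup_i B_i$, Remark \ref{rem:equivalent.norm} shows that $\|e(1-L)^k u\|_{L^2(\cup_i B_i)}$ is comparable to $\|(1-L)^k u - P_W u\|_{\dot H^1(\R^3)}$, and applying the same remark to $P_{W^\perp} u$ --- whose symmetric gradient on $\cup_i B_i$ coincides with $eu$ --- gives $\|P_{W^\perp} u\|_{\dot H^1(\R^3)} \le C \|eu\|_{L^2(\cup_i B_i)}$; inserting these into the displayed estimate produces the two inequalities of Theorem \ref{th:L^q.rate.phi} for $q=2$.
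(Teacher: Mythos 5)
Your argument is correct and is exactly the route the paper intends: the paper states Corollary \ref{co:L^2.rate.phi} as a direct consequence of Proposition \ref{pro:char.L}, and your bookkeeping --- $L=0$ on $W$, $(1-L)|_{W^\perp}=-\phi_0 T$, hence $(1-L)^k u = P_W u + (-\phi_0)^k T^k P_{W^\perp}u$ --- is precisely the omitted verification, with the correct constant $C=\|T\|$ depending only on $\theta$. Your closing remark linking this to the $q=2$ case of Theorem \ref{th:L^q.rate.phi} via Remark \ref{rem:equivalent.norm} is likewise consistent with the paper.
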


As we will see in the next section, for the proof of Theorem \ref{th:L^q.rate.phi} in the general case $1<q<\infty$, we will show an analogous version of Proposition \ref{pro:char.L}.
However, the analogue of the decomposition $\dot H^1_\sigma(\R^3) = W \oplus W^\perp$ for $\dot W^{1,q}_\sigma(\R^3)$ is more subtle. Indeed, as we will see in the next section, to obtain such an analogue decomposition, we will rely on Corollary \ref{co:L^2.rate.phi}.

In order to prove Proposition \ref{pro:char.L}, it is crucial to understand the interactions between the particles in the method. Recall the setup of the method of reflections from \eqref{eq:defV_k} as
\begin{align}
	 v_k  = \left(1 - \sum_i Q_i \right)^k u
\end{align}
and that the iteration of the operator $(1 - \sum_i Q_i)$ is necessary, since $e Q_i w$ generally does not vanish in $B_j$ for $j \neq i$.
In order to quantify this error, we will study $\nabla Q_i w$ in $B_j$.
The following decay estimate has been shown in \cite[Corollary 4.4]{NiethammerSchubert19}
\begin{align}
	|\nabla Q_i w(x)|  \leq C \|e w\|_{L^2(B_i)} \frac{1}{|x - X_i|^3} \qquad  \text{ for all } x \in \R^3 \setminus B_{\theta}(X_i).
\end{align}
Although this estimate is optimal, it is not sufficient in the case when the particles are distributed everywhere in the whole space
due to the critical exponent on the right-hand side.
Therefore, it is not possible to derive the desired estimate by adding the absolute values of all the errors coming from all other particles,
but it can only be achieved by taking into account possible cancellations between these errors.
To be able to exploit such cancellation phenomena, we split the operator $Q_i$ into two parts $Q_i^d$ and $Q_i^q$,
which we call the simple dipole part and the quadrupole part.
As explained in the introduction, $Q_i$ maps to functions that are dipole potentials meaning $\int_{\R^3} -\Delta Q_i u + \nabla p = 0$.
As we will see below, the simple dipoles are explicit, which enables us to study the cancellations using Calderon-Zygmund theory.
The remainder $Q_i^d$ maps to functions with vanishing monopole and dipole moment. Therefore, they decay faster (their gradient like $|x|^{-4}$) yielding summability.
We remark that simple dipoles have also been considered in \cite{NiethammerSchubert19,HillairetWu19,Gerard-VaretHillairet19}  and that the approximation of the method of reflections studied in \cite{HillairetWu19,Gerard-VaretHillairet19} is similar to neglecting the quadrupole parts $Q_i^d$.

\subsection{Simple dipoles}
We denote by $\Sym_0(3)$ the set of all symmetric traceless $3\times 3$ matrices.
We call a function $w \in W_i^\perp$ a simple dipole (potential), if $w$ is affine in $B_i$.
By the characterization of $W_i^\perp$ in Lemma \ref{lem:dipole.char}, this means that there exists $S \in \Sym_0(3)$ such that $w(x) = S (x- X_i)$ in $B_i$.
We denote 
\begin{align}
	V_{i}^d &:= \{ w \in W_i^\perp : \nabla^2 w = 0 \text{ in } B_i \}, \\
	V_{i}^q &:= \{ w \in W_i^\perp : (w,\varphi)_{\dot H^1(\R^3)} = 0 \text { for all } \varphi \in V_{i}^d\}.
\end{align}
We call functions in $V^{q}_i$ quadrupole (potentials).
The functions in $V^d_i$ are solutions to the Stokes system
\begin{align}
	- \Delta w + \nabla p &= 0, \quad \dv w = 0 \qquad \text{in } \R^3 \setminus B_i, \\
	w &= S (x - X_i) \qquad \text{in } B_i.
\end{align}
They can be computed explicitly (see e.g. \cite{GuazzelliMorris12}) to find $w(x) = w_S(x- X_i)$ with
\begin{align} \label{eq:dipole.poential}
	w_S(x) := 
	\begin{cases}
		S x &\quad \text{in } B_i \\
		\frac{5}{2} \frac{ x ( x \cdot S x)}{|x|^5}  + \frac{S x}{|x|^5} - \frac 5 2 \frac {x (S x \cdot x)}{|x|^7} 
		&\quad \text{in } \R^3 \setminus \overline{B_i}. 
	\end{cases}
\end{align}

We now define the operators
\begin{align} \label{def:Q^d}
	Q_i^d := P_{V_i^d} Q_i, && Q_i^q := Q_i - Q_i^d,
\end{align}
where $P_{V_i^d}$ denotes the orthogonal projections to the subspace $V_i^d$.

We can give the following characterization of $Q_i^d$.
 
\begin{lem} \label{lem:dipole.char}
	Let $w \in \dot H^1_{\sigma}(\R^3)$. Then, $Q_i^d w(x) = w_{S}(x - X_i)$, where $w_S$ is defined by \eqref{eq:dipole.poential} and
	\begin{align}
		S = \fint_{B_i} e w.
	\end{align}
\end{lem}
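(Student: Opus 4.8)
The key point is that $Q_i^d w$ is, by definition, the orthogonal projection of $Q_i w$ onto $V_i^d$, and $V_i^d$ consists precisely of the functions $w_S(\cdot - X_i)$ for $S \in \Sym_0(3)$ (as recalled from the explicit solution \eqref{eq:dipole.poential}). So the proof amounts to identifying which $S$ produces this projection, and the natural guess is $S = \fint_{B_i} e w$, which is traceless since $\dv w = 0$. First I would observe that for any $w \in \dot H^1_\sigma(\R^3)$ we have $Q_i^d w = Q_i^d Q_i w$, and since $Q_i w \in W_i^\perp$ is affine in $B_i$ iff it lies in $V_i^d$, it is enough to show that $\fint_{B_i} e (Q_i^d w) = \fint_{B_i} e(Q_i w) = \fint_{B_i} e w$; the last equality holds because $P_i w$ is a rigid body motion in $B_i$ by Lemma \ref{lem:projectionInsideParticle}, so $e(P_i w) = 0$ in $B_i$ and hence $e(Q_i w) = e w$ there.

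\textbf{Main steps.} Thus the crux is: if $\tilde w := Q_i w \in W_i^\perp$ and we write its $V_i^d$-component as $w_S(\cdot - X_i)$, then $S = \fint_{B_i} e\tilde w$. Decompose $\tilde w = w_S(\cdot - X_i) + \tilde w^q$ with $\tilde w^q \in V_i^q$. Restricting to $B_i$, $w_S(x - X_i) = S(x - X_i)$ so $\fint_{B_i} e w_S(\cdot - X_i) = S$. It therefore suffices to show $\fint_{B_i} e \tilde w^q = 0$ for every $\tilde w^q \in V_i^q$. This is the one genuine computation: for $w \in W_i^\perp$, the average $\fint_{B_i} e w$ is a traceless symmetric matrix $A$, and pairing $w$ against the specific simple dipole $w_A(\cdot - X_i) \in V_i^d$ in the $\dot H^1$ inner product should reproduce (a positive multiple of) $|A|^2$ plus boundary terms that vanish. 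Concretely, $(\tilde w^q, w_A(\cdot - X_i))_{\dot H^1} = 0$ by definition of $V_i^q$; on the other hand, integrating by parts using that $w_A(\cdot - X_i)$ solves the Stokes equations outside $B_i$ (with associated pressure), the exterior contribution collapses to a boundary integral over $\partial B_i$, while inside $B_i$ one gets $\int_{B_i} \nabla \tilde w^q : A = 2\int_{B_i} e\tilde w^q : A = 2|B_i|\, (\fint_{B_i} e\tilde w^q) : A$; the boundary terms vanish because elements of $W_i^\perp$ are force- and torque-free dipoles (equation \eqref{eq:characterizationWPerp}) and $w_A(\cdot - X_i)$ is continuous across $\partial B_i$. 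Choosing $A = \fint_{B_i} e\tilde w^q$ forces $A = 0$.

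\textbf{Expected obstacle.} The only delicate point is the integration-by-parts / boundary-term bookkeeping in the pairing $(\tilde w^q, w_A(\cdot - X_i))_{\dot H^1}$: one must be careful that $w_A(\cdot - X_i)$ has the right integrability at infinity (its gradient decays like $|x|^{-3}$, which is borderline but enough in $\dot H^1$ with the decay of $\tilde w^q$), that the pressure terms integrate to zero against divergence-free fields, and that the surface integrals on $\partial B_i$ genuinely vanish using the vanishing force and torque of $\tilde w^q \in W_i^\perp$ together with the jump of the stress of $w_A$ across $\partial B_i$. Once this is in place, combining $\fint_{B_i} e (Q_i^d w) = S$ with $\fint_{B_i} e(Q_i w) = \fint_{B_i} e w$ and $\fint_{B_i} e (Q_i^q w) = 0$ gives $S = \fint_{B_i} e w$, which is the claim. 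An alternative, perhaps cleaner route avoiding heavy boundary analysis is to note that $V_i^d$ is finite-dimensional and the map $S \mapsto \fint_{B_i} e(w_S(\cdot - X_i)) = S$ is the identity, so the orthogonal projection of $\tilde w$ onto $V_i^d$ is characterized by matching the $\dot H^1$-inner products against all $w_{S'}(\cdot - X_i)$; one then only needs the single orthogonality relation $\fint_{B_i} e w = 0$ for $w \in V_i^q$, which is exactly the computation above.
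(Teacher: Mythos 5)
Your proposal is correct and follows essentially the same route as the paper: you reduce to showing that the quadrupole part has vanishing average symmetric gradient in $B_i$ by pairing it against a simple dipole $w_A(\cdot-X_i)$, which is exactly the paper's computation via the distributional identity $-\Delta w_T+\nabla p=5\,Tx\,\mathcal H^2|_{\partial B_1}$, combined with $e(Q_i w)=e w$ in $B_i$ since $P_i w$ is rigid there. The only blemishes are cosmetic: the interior term is $\int_{B_i}\nabla\tilde w^q:A=\int_{B_i}e\tilde w^q:A$ (no factor $2$, since $A$ is symmetric), and the exterior boundary term does not vanish but contributes through the stress jump of $w_A$ across $\partial B_i$ (which you do flag), yielding the same conclusion.
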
 

\begin{proof}
	Without loss of generality, we assume $X_i = 0$. Let $S \in \Sym_0(3)$ and such that $Q_i^d w(x) = w_{S}$.
	Let $T \in \Sym_0(3)$. A direct computation yields
	\begin{align} \label{eq:dipole.charge}
		-\Delta w_T + \nabla p = 5  T  x \mathcal{H}^2 |_{\partial B_1} \quad \text{in } \R^3.
	\end{align}
	Thus, using that $\nu = x$ on $\partial B_i$, we have
	\begin{align}
	0 &= ( Q_i w  - Q_i^d w, w_T )= \langle Q_i w  - Q_i^d w, -\Delta w_T + \nabla p \rangle 
	 =  5 \int_{\partial B_i} (Q_i w  - Q_i^d w) \cdot T \nu  \\ 
	 &= 5 T :  \int_{B_i} \nabla (Q_i w(y)-w_S(y)) \dd y 
	 = 5 T :  \int_{B_i} \nabla Q_i w(y)- S \dd y 
	\end{align}
	Since $T \in \Sym_0(3)$ was arbitrary, we deduce
	\begin{align}
		S = \fint_{B_i}  eQ_i w = \fint_{B_i}  e w,
	\end{align}
	where the last identity is immediate from the definition of $W_i$.
\end{proof}

The following straightforward decay estimates extend Lemma \cite[Lemma 4.7]{NiethammerSchubert19}.
\begin{lem} \label{lem:decay.dipole.quadrupole}
	Let $w \in \dot H^1(\R^3)$. Then, for all $x \in \R^3 \setminus B_{\theta}(X_i)$
	\begin{align}
		\label{eq:dipoleEstimate}
		|Q_i w(x)| \leq C \frac{1}{|x-X_i|^2} \|e w\|_{L^2(B_i)}, &&
		|\nabla Q_i w(x)| \leq C \frac{1}{|x-X_i|^3}  \|e w\|_{L^2(B_i)}, \\
		\label{eq:quadrupoleEstimate}
		|Q_i^q w(x)| \leq C \frac{1}{|x-X_i|^3} \|e w\|_{L^2(B_i)}, &&
		|\nabla Q_i^q w(x)| \leq C \frac{1}{|x-X_i|^4} \|e w\|_{L^2(B_i)}, 
	\end{align}
	where $C$ depends only on $\theta$.
\end{lem}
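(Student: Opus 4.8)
The plan is to read off all four bounds from the far-field behaviour of exterior Stokes flows with controlled boundary moments; since $|Q_i w|$ and $|Q_i^q w|$ are obtained by integrating the corresponding gradient bounds along rays, the real content is the gradient decay, and for the quadrupole the extra power of $|x|$. Keep the normalisation $X_i = 0$, $R_i = 1$. Both $Q_i w$ and $Q_i^q w = (\Id - P_{V_i^d}) Q_i w$ lie in $W_i^\perp$ ($V_i^d \subset W_i^\perp$ is a closed subspace), hence solve the homogeneous Stokes system in $\R^3 \setminus \overline{B_1}$ with vanishing net force and torque, are divergence free with gradient in $L^2$, and by Lemma \ref{lem:projectionInsideParticle} satisfy $\fint_{\partial B_1}(\cdot) = 0$ and $\fint_{B_1}\curl(\cdot) = 0$; moreover $\|Q_i w\|_{\dot H^1(\R^3)}, \|Q_i^q w\|_{\dot H^1(\R^3)} \le C\|ew\|_{L^2(B_1)}$ by \eqref{eq:norm.Q_iu} and contractivity of orthogonal projections. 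The gradient bound $|\nabla Q_i w(x)| \le C|x|^{-3}\|ew\|_{L^2(B_1)}$ on $\{|x| \ge \theta\}$ is \cite[Corollary 4.4]{NiethammerSchubert19}, and $|\nabla Q_i^q w(x)| \le C|x|^{-4}\|ew\|_{L^2(B_1)}$ there is essentially \cite[Lemma 4.7]{NiethammerSchubert19}. From these the sup-norm bounds follow by integrating $\nabla Q_i w$, resp.\ $\nabla Q_i^q w$, along the ray $\{r x/|x| : r \ge |x|\}$ (which stays in $\{|y| \ge \theta\}$), using that these functions lie in $\dot H^1(\R^3) \subset L^6(\R^3)$, are continuous on $\{|y| > 1\}$, and have decaying gradient, so they tend to $0$ at infinity (otherwise $Q_i w$ would be $\gtrsim c$ on balls of radius $\sim |y_n|$ along some $y_n \to \infty$, contradicting $L^6$-summability); hence $|Q_i w(x)| \le \int_{|x|}^\infty |\nabla Q_i w|(r x/|x|)\dd r \le \tfrac C2|x|^{-2}\|ew\|_{L^2(B_1)}$ and likewise $|Q_i^q w(x)| \le \tfrac C3|x|^{-3}\|ew\|_{L^2(B_1)}$.

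For completeness I indicate how the quadrupole gradient bound follows directly from the structure, as this also explains the gain of a power of $|x|$. Write $Q_i^q w = Q_i w - w_S$ with $S = \fint_{B_1}ew$, $|S|\le C\|ew\|_{L^2(B_1)}$ (Lemma \ref{lem:dipole.char}); by \eqref{eq:dipole.poential}, $w_S$ is a pure stresslet field plus an $O(|x|^{-3})$ remainder, so everything reduces to showing that the $O(|x|^{-2})$ part of $Q_i^q w$ vanishes quantitatively. Since $Q_i^q w\in W_i^\perp$, its far field carries no Stokeslet and no rotlet; it remains to kill the velocity-dipole and stresslet parts. For the velocity dipole: pairing $Q_i^q w$ with the simple dipole $w_T$, $T\in\Sym_0(3)$, orthogonality $Q_i^q w\perp V_i^d$ gives $(Q_i^q w, w_T)_{\dot H^1}=0$, while integrating by parts and using the distributional identity \eqref{eq:dipole.charge} (as in the proof of Lemma \ref{lem:dipole.char}) this pairing equals $5\,T:\int_{B_1}\nabla Q_i^q w$; hence the symmetric traceless part of $\int_{B_1}\nabla Q_i^q w$ vanishes, and since its trace is $\int_{B_1}\div Q_i^q w = 0$ and its antisymmetric part is $\fint_{B_1}\curl Q_i^q w = 0$, we obtain $\int_{B_1}\nabla Q_i^q w = 0$, i.e.\ $\int_{\partial B_1}Q_i^q w\otimes n\dd S = 0$. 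For the stresslet: apply the Lorentz reciprocal identity to $(Q_i^q w, w_T)$ over $\R^3\setminus\overline{B_1}$ — the contribution at infinity vanishes since both flows are $O(|x|^{-2})$ with stresses $O(|x|^{-3})$ — using that $w_T = Tn$ on $\partial B_1$ and that its fluid-side traction is $-3Tn$ (from the jump relation behind \eqref{eq:dipole.charge}: $\sigma_{\mathrm{in}}[w_T]n = 2Tn$ and $\sigma_{\mathrm{in}}[w_T]n - \sigma_{\mathrm{out}}[w_T]n = 5Tn$); this yields $T:\int_{\partial B_1}(\sigma[Q_i^q w]n)\otimes n\dd S = -3\,T:\int_{\partial B_1}Q_i^q w\otimes n\dd S = 0$ for all $T\in\Sym_0(3)$, so the stresslet of $Q_i^q w$ vanishes. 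With Stokeslet, rotlet, velocity dipole and stresslet all absent — the sole remaining would-be $O(|x|^{-2})$ term being annihilated by $\partial_j\Phi_{ij}\equiv 0$ for the Oseen tensor $\Phi$ — the boundary-integral representation of $Q_i^q w$ over a sphere interior to the fluid, on which interior Stokes estimates bound velocity and traction by $C\|\nabla Q_i^q w\|_{L^2(\R^3)}\le C\|ew\|_{L^2(B_1)}$, Taylor-expanded about the origin gives the $|x|^{-4}$ gradient decay with the asserted constant.

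If one is content to quote \cite{NiethammerSchubert19} for the gradient bounds, the proof is essentially just the ray-integration and poses no difficulty. The one genuinely delicate point, needed for the self-contained route, is organising the far-field expansion quantitatively: verifying that no $O(|x|^{-2})$ term survives once force, torque, stresslet and velocity dipole are removed, and that the $O(|x|^{-3})$ coefficient is controlled by $\|ew\|_{L^2(B_1)}$ uniformly in the particle configuration — which is why the representation is taken over an interior sphere and the surface-independence of the low-order far-field moments is exploited.
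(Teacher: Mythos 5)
Your proposal is correct in substance but follows a genuinely different route from the paper. The paper proves the decay bounds (it writes out only the first estimate in \eqref{eq:quadrupoleEstimate}, the others being analogous) by a duality argument: it writes $Q_i^q w = \Phi \ast f$ with $f = -\Delta Q_i^q w + \nabla p \in \dot H^{-1}_\sigma(\R^3)$ supported in $\overline{B_i}$ and $\|f\|_{\dot H^{-1}_\sigma(\R^3)} \leq C \|e w\|_{L^2(B_i)}$, pairs $f$ with $\Phi(x-\cdot)$ minus its average and affine part on the particle --- a subtraction that costs nothing because pairing $f$ with constants, rotations and symmetric traceless affine fields vanishes (zero force, zero torque, orthogonality to $V_i^d$) --- and localizes the test function with the extension operator of Lemma \ref{lem:extest}, so that the decay drops out of $\|\nabla \Phi - (\nabla\Phi)_{x,1}\|_{L^2(B_1(x))} \leq C |x-X_i|^{-3}$, uniformly in the configuration. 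You instead take the gradient decay as primary, recover the sup bounds by ray integration (which is fine: gradient decay plus $\dot H^1(\R^3)\subset L^6(\R^3)$ forces the limit at infinity to vanish), and for the quadrupole kill the far-field $|x|^{-2}$ terms one by one: force and torque from membership in $W_i^\perp$, the moment $\int_{\partial B_i} Q_i^q w \otimes n$ from orthogonality to $V_i^d$ together with Lemma \ref{lem:projectionInsideParticle}, and the stresslet by Lorentz reciprocity with the traction jump read off from \eqref{eq:dipole.charge}; these identities are correct, and the a priori $|x|^{-2}$ decay needed to discard the boundary term at infinity is available from the dipole estimates, so there is no circularity. Two caveats: quoting \cite[Lemma 4.7]{NiethammerSchubert19} for $|\nabla Q_i^q w| \lesssim |x-X_i|^{-4}$ is not a safe shortcut, since the present lemma is stated precisely as an extension of that result, so the burden is carried by your second, self-contained argument; and its final quantitative step (representation over an intermediate sphere, bounding velocity and traction there by $C\|e w\|_{L^2(B_i)}$ via local Stokes estimates and $\fint_{\partial B_i} Q_i^q w = 0$, then Taylor expansion, with constants depending only on $\theta$) is only sketched, though standard. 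What the paper's duality proof buys is that all this multipole bookkeeping and surface-independence is compressed into a single estimate on a localized, moment-corrected fundamental solution; what your proof buys is an explicit identification of the physical moments (Stokeslet, rotlet, stresslet, velocity dipole) that the projection annihilates.
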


\begin{proof}	
	We prove the first estimate in \eqref{eq:quadrupoleEstimate}. 
	The other estimates are analogous. 

	Assume without loss of generality $X_i =0$. Since $Q_i^q w \in W_i^\perp$ we know that $f := - \Delta Q_i^q w \in \dot H^{-1}_\sigma(\R^3)$ with $\supp f \subset \overline{B_i}$ such that
	$S f = Q_i^q w$, where $S$ is the solution operator for the Stokes equations in $\R^3$.
	Here, $\dot H^{-1}_\sigma(\R^3)$ denotes the dual of $\dot H^{1}_\sigma(\R^3)$ and $\supp f \subset \overline{B_i}$
	means $\langle f, \varphi \rangle = 0$ for all $\varphi \in \dot H^1_\sigma(\R^3)$ with $\varphi = 0 $ in $B_i$. We have
	\begin{align}
		\|f\|_{\dot H^{-1}_\sigma(\R^3)} = \| Q_i^q w\|_{\dot H^1(\R^3)} \leq \| Q_i w\|_{\dot H^1(\R^3)} \leq C \|e w\|_{L^2(B_i)},
	\end{align}
	where we used Lemma \ref{lem:Q.scalar.product} in the last estimate.
	We denote by $\Phi$ the fundamental solution of the Stokes equations. Then, we claim
	\begin{align}
		|Q^q_i w (x)| &= |(\Phi \ast f)(x)| = \langle f, E(\Phi - (\Phi)_{x,1} - (\nabla \Phi)_{x,1} \cdot (y-x)) \rangle.
	\end{align}
	Here
	\[
		(\Phi)_{x,1} = \fint_{B_{1}(x)} \Phi(y) \dd y,
	\]	
	and similarly for $(\nabla \Phi)_{x,1}$.
	 Moreover, $E(\Phi - (\Phi)_{x,1} - (\nabla \Phi)_{x,1} \cdot (y-x)) \in H^1_{\sigma,0}(B_\theta(x))$ is an
	extension of the restriction of $\Phi - (\Phi)_{x -X_i,\theta} - (\nabla \Phi)_{x,1} \cdot (y-x)$ to 
	$B_{1}(x)$.
	
	Indeed, since $\supp f \subset \overline{B_i}$ the extension does not affect the convolution. Moreover, 
	since $Q^q_i w \in V^q_i$, subtracting the affine function $(\Phi)_{x -X_i,\theta} - (\nabla \Phi)_{x,1} \cdot (y-x)$ has no effect. 
	
By Lemma \ref{lem:extest} and Remark \ref{rem:ext.est}
	we can choose the extension such that 
	\begin{align}
		\|E (\Phi - (\Phi)_{x,1} - (\nabla \Phi)_{x,1} \cdot (y-x))\|_{\dot H^1(\R^3)} \leq C \|\nabla \Phi - (\nabla \Phi)_{x,1} \|_{L^2(B_i)} 
		\leq C \frac{1}{|x|^3},
	\end{align}
		where the last estimate follows from a direct computation for $|x| \geq \theta$. Hence
	\begin{align*}
		|Q^q_i w (x)| &\leq \|f\|_{\dot{H}^{-1}_\sigma(\IR^3)} \|E(\Phi - (\Phi)_{x,1} - (\nabla \Phi)_{x,1} \cdot (y-x))\|_{\dot H^1(\R^3)}  \\
		& \leq C \frac{1}{|x|^3} \|e w\|_{L^2(B_i)}.   \tag*{\qedhere} 
	\end{align*}
\end{proof}

\subsection{Proof of Proposition \ref{pro:char.L}}

\begin{proof}[Proof of Proposition \ref{pro:char.L}]
	Since $L$ is self-adjoint and $\ker L = W$, we have $\operatorname{range} L = W^\perp$. Thus there exists a unique $T \colon W^\perp \to W^\perp$ such that \eqref{eq:def.T} holds. 
	It remains to prove $\|T\| \leq C$, where $C$ depends only on $\theta$.
	By Remark \ref{rem:equivalent.norm}, we can equip $W^\perp$ with the equivalent norm
	\begin{align}
		\|e \cdot \|_{L^2(\cup_i B_i)}.
	\end{align}	 
	Thus, we need to prove
	\begin{align} \label{eq:T_Bounded}
		\|e (1-L) v\|_{L^2(\cup_i B_i)} \leq C \phi_0 \|e v\|_{L^2(\cup_i B_i)} \quad \text{for all } v \in W^\perp.
	\end{align}
	Let $v \in W^\perp$.  We observe that
	\begin{align}
		e (1-L) v = e v - \sum_j e Q_j v = - \sum_{j \neq i} e Q_j v
		 = - \sum_{j \neq i} \left(e Q^d_j v +  e Q^q_j v  \right)
		\qquad \text{in } B_i.
	\end{align}
	where $Q^d_j$ and $Q^q_j v$ are the simple dipole  and the quadrupole part of $Q_j v$, respectively, defined in \eqref{def:Q^d}.
	Thus, it remains to prove that
	\begin{align}
		\sum_i \| \sum_{j \neq i} e Q_j^d v \|^2_{L^2(B_i)} &\leq C \phi_0^2 \|e v\|^2_{L^2(\cup_i B_i)}, \label{eq:est.sum.Q^d} \\
		\sum_i \| \sum_{j \neq i} e Q_j^q v \|^2_{L^2(B_i)} &\leq C \phi_0^{\frac 8 3} \|e v\|^2_{L^2(\cup_i B_i)}. \label{eq:est.sum.Q^q}
	\end{align}		

	We begin with the quadrupole term \eqref{eq:est.sum.Q^q}, which is easier and gives a better estimate than the dipole term \eqref{eq:est.sum.Q^d} thanks to the better estimates for the the quadrupoles in Lemma \ref{lem:decay.dipole.quadrupole}.
	Indeed, by Lemma \ref{lem:decay.dipole.quadrupole}, we have
	\begin{align}
		|e Q_j^q v| \leq \frac{C}{|X_i - X_j|^4} \|e v\|_{L^2(B_j)} \qquad \text{in } B_i \quad \text{for all } j \neq i.
	\end{align}
	Thus, 
	\begin{align}
		\sum_i \| \sum_{j \neq i} e Q^q_j v\|^2_{L^2(B_i)} &\leq C  \sum_i \left( \sum_{j \neq i} \frac{1}{|X_i - X_j|^4} \|e v\|_{L^2(B_j)} \right)^2 \\
		& \leq C \sum_i \sum_{j \neq i} \frac{1}{|X_i - X_j|^4} \|e v\|^2_{L^2(B_j)}   \sum_{k \neq i} \frac{1}{|X_i - X_k|^4}
	\end{align}
	We can estimate the sum in $k$ by an integral to find
	\begin{align}
	\sum_{k \neq i} \frac{1}{|X_i - X_k|^4} \leq  C d_{\min}^{-3} \int_{\R^3 \setminus B_{d_{\min}}(0)} \frac{1}{|x|^4} \dd x \leq C d_{\min}^{-4}
	\end{align}
	After exchanging the sums in $i$ and $j$, we can use the same estimate again, yielding
	\begin{align}
		\sum_i \| \sum_{j \neq i} \nabla Q^q_j v\|^2_{L^2(B_i)} 
		&\leq   C d_{\min}^{-8} \sum_{j} \|e v\|^2_{L^2(B_j)}.
\end{align}
Recalling $\phi_0 = d_{\min}^{-3}$, this finishes the proof of \eqref{eq:est.sum.Q^q}.

It remains to prove \eqref{eq:est.sum.Q^d}. We use the explicit form of the $Q_j^d v$
	 provided by \eqref{eq:dipole.poential}. We introduce 
	 \begin{align}
	 	\mathcal K(x) S  := e\left(\frac{5}{2} \frac{(S x \cdot x) x}{|x|^5}\right) \qquad \text{for all } S \in \Sym_0(3).
	 \end{align}
	 Then, by Lemma \ref{lem:dipole.char} and equation \eqref{eq:dipole.poential}
	 \begin{align}
	 	e Q_j^d v(x) =  \colon \mathcal K(x - X_j) (ev)_j + R(x-X_j).
	 \end{align}
	 where $(ev)_j = \fint_{B_j} ev$ and with an explicit remainder term $R$. We note that $R$ decays sufficiently fast (like $|x|^{-5}$) such that we can apply the same strategy as above for
	 the quadrupole terms  $Q^q_i$.
	 
	 It remains to prove
	 \begin{align}
	 		\sum_i \| \sum_{j \neq i} \mathcal K(x - X_j) (ev)_j \|^2_{L^2(B_i)} &\leq C \phi_0^2 \|e v\|^2_{L^2(\cup_i B_i)},
	 \end{align}
	We introduce the function
	\begin{align}
		g := \sum_j (e v)_{j} |B_{\dmin/4}|^{-1} \1_{B_{\dmin/4}(X_j)}.
	\end{align}
	Then, for all $x \in B_i$
	\begin{equation} \label{eq:sum.to.integral.dipole}
	 \begin{aligned} 
	 		 \sum_{j \neq i} \mathcal K(x - X_j)(ev)_j &= 
	 		 \int_{\R^3 \setminus B_{\dmin/2}(X_i)}  \left(\mathcal K(x - X_j) - \mathcal K(x - y)\right)g(y)  \dd y  \\
	 		 &+ \fint_{B_{\dmin / 4}(X_i)} \int_{\R^3 \setminus B_{\dmin/2}(X_i)} 
	 		 		 \left(\mathcal K(x - y) - \mathcal K(z - y)\right) g(y)  \dd y \dd z \\
	 		&+ \fint_{B_{\dmin / 4}(X_i)} \int_{\R^3 \setminus B_{\dmin/2}(X_i)} \mathcal K(z- y) g(y) \dd y \dd z \\
	 		& =: A^i_1(x) + A^i_2(x) + A^i_3.
	 \end{aligned} 
	 \end{equation}
	 Since for all $x \in B_i$ and $y \in B_{\dmin/2}(X_j)$
	 \begin{align}
	 \left|\mathcal K(x - X_j) - \mathcal K(x - y)\right| \leq \frac{C \dmin}{|X_i - X_j|^4},
	 \end{align}
	the first term on the right-hand side of \eqref{eq:sum.to.integral.dipole} is estimated analogously to \eqref{eq:est.sum.Q^q} above,
	leading to 
	\begin{align}
		\sum_i \|A_1^i\|^2_{L^2(B_i)} \leq C \phi_0^2 \|\nabla v \|_{L^2(\cup_i B_i)}^2
	\end{align} 
	Similarly, $A^i_2$ is controlled by 
		\begin{align}
		\sum_i \|A_2^i\|^2_{L^2(B_i)} \leq C \phi_0^2 \|\nabla v \|_{L^2(\cup_i B_i)}^2
	\end{align} 
	
	In order to estimate $A^i_3$ in  \eqref{eq:sum.to.integral.dipole} (which are constant functions) we notice that
	\begin{align}
		\int_{\R^3 \setminus B_{\dmin/2}(X_i)}  \mathcal K (z- y) g(y) \dd y 
		= (\mathcal K \ast (g-g_i))(z),
	\end{align}
	where 
	\begin{align}
		g_i := (\nabla v)_{B_i} |B_{\dmin/4}|^{-1} \1_{B_{\dmin/4}(X_i)}.
	\end{align}
	Since $\mathcal K$ is a Calderon-Zygmund operator, we deduce
	\begin{align}
		\sum_i \|A_3^i\|_{L^2(B_i)}^2  &= C \sum_i (A_3^i)^2  \\
		& \leq C \dmin^{-3} \sum_i \|\mathcal K \ast (g-g_i)\|_{L^2(B_{\dmin / 4}(X_i))}^2 \\
		&\leq C \dmin^{-3} \sum_i \|\mathcal K \ast g\|_{L^2(B_{\dmin / 4}(X_i))}^2 + C \dmin^{-3} \sum_i \|\mathcal K \ast g_i\|_{L^2(B_{\dmin / 4}(X_i))}^2 \\
		& \leq  C \dmin^{-3}  \|\mathcal K \ast g\|_{L^2(\R^3)}^2 + C \dmin^{-3}  \sum_i \|\mathcal K \ast g_i\|_{L^2(\R^3)}^2 \\
		& \leq C \dmin^{-3}  \|g\|_{L^2(\R^3)}^2 + C \dmin^{-3}  \sum_i \|g_i\|_{L^2(\R^3)}^2 \\
		& \leq C \dmin^{-3}  \|g\|_{L^2(\R^3)}^2  \\
		&\leq C \phi_0^2  \|e v\|^2_{L^2(\cup_i B_i)}
	\end{align}
		This finishes the proof of \eqref{eq:est.sum.Q^d}. The proof of the proposition is complete.
\end{proof}

\section{Proof of Theorem \ref{th:L^q.rate.phi} in the general case}

\label{sec:L^p}

To prove Theorem \ref{th:L^q.rate.phi} for general $1 < q < \infty$, we follow the proof in the case $q = 2$. 
More precisely, we aim for a characterization of the operator $L$ acting on $\dot W^{1,q}(\R^3)$ analogous to Proposition \ref{pro:char.L}.
Clearly, for such a characterization, we first of all need to extend $L = \sum_i Q_i$ to a bounded linear operator on $\dot W^{1,q}(\R^3)$ 
and we need to find a suitable replacement of $W^\perp$.

In view of the characterization of $W_i^\perp$ in \eqref{eq:characterizationWPerp}, we define 
for $1 < q < \infty$ the function spaces
\begin{align}
	V_{i,q} &:= \biggl \{ w \in \dot W_\sigma^{1,q}(\R^3) : -\Delta w + \nabla p = 0  ~ \text{in} ~  \IR^3 \backslash \overline{B_i},  \\
	& \qquad ~ \int_{\partial B_i}  \sigma[w,p] n = 0 =  \int_{\partial B_i}  (x - X_i) \times (\sigma[w,p] n)  \biggr\},\\
	V_q &:=  \biggl \{ w \in \dot W_\sigma^{1,q}(\R^3) : -\Delta w + \nabla p = 0  ~ \text{in} ~  \IR^3 \backslash \overline{B_i}, \\
	& \qquad ~ \int_{\partial B_i}  \sigma[w,p] n = 0 =  \int_{\partial B_i}  (x - X_i) \times (\sigma[w,p] n)  \text{ for all } i \in I\biggr\}.
\end{align}	

For the proof of the analogous version of Proposition \ref{pro:char.L}, we need the following ingredients, which we will make precise below:
\begin{itemize}
	\item The extension of the operators $Q_i$ and $L$ to $\dot W_\sigma^{1,q}(\R^3)$ with corresponding estimates.
	\item The decomposition $\dot W_\sigma^{1,q}(\R^3) = W_q \oplus V_q$, 
	 that $\|e \cdot\|_{L^q(\cup_i B_i)}$ is an equivalent norm on $V_q$ and that the solution $v \in \dot W^{1,q}(\R^3)$ to \eqref{eq:weak.from} is given by $P_{W_q} u$
	 where $u$ is defined as in \eqref{eq:defU}.
	\item The decomposition $Q_i = Q_i^q + Q_i^d$ with decay estimates.
\end{itemize}

The main difficulty is the second point, the decomposition $\dot W_\sigma^{1,q}(\R^3) = W_q \oplus V_q$ with uniform estimates in the particle configurations. This is directly related to a well-posedness result for \eqref{eq:weak.from} and one could try to
 obtain such a result using a classical approach for boundary value problems of elliptic equations. 
However, it seems at least very technical to prove such a result with a uniform bound for all particle configurations with $\phi_0$ sufficiently small.

We therefore take a different  approach  relying on the the method of reflections.
This is the reason why we first studied the convergence in $\dot H^1(\R^3)$ in the previous section.  Interpolating 
the converegence rate in $\dot H^1(\R^3)$ provided by Corollary \ref{cor:L^2.no.rate} with the uniform estimates on $L$ in $\dot W^{1,q}(\R^3)$
that we are going to show, we can prove that the method of reflections converges in $\dot W^{1,q}(\R^3)$ to a solution of 
\eqref{eq:weak.from} provided $\phi_0$ is sufficiently small.
Although we do not obtain optimal convergence rates due to the interpolation, this is enough to deduce uniform bounds on the solution.

\begin{prop}	 \label{pro:L.bounded.L^q}
	Let $1 < q < \infty$. Then the operators $Q_i$ extend to bounded linear operators on $\dot W_\sigma^{1,q}(\R^3)$ such that for all $w \in \dot W_\sigma^{1,q}(\R^3)$
	\begin{align}
	\label{eq:bound.Q_i.q}
		\|Q_i w\|_{\dot W^{1,q}(\R^3)} \leq C \|e w\|_{L^q(B_i)}.
	\end{align}
	where $C$ depends only on $q$.
	Moreover, $L=\sum_i Q_i$ is a well-defined bounded operator on $\dot W^{1,q}_\sigma(\R^3)$ with $W_q \subset \ker L$, $\operatorname{range} L \subset V_q$ and 
	\begin{align} \label{eq:bound.L.q} 
		\|L w\|_{\dot W^{1,q}(\R^3)} \leq C \|e w\|_{L^q(\cup_i B_i)}.
	\end{align}
\end{prop}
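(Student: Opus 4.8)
The strategy is to establish the single-particle estimate \eqref{eq:bound.Q_i.q} first and then to read off from it both the boundedness of $L$ on $\dot W^{1,q}_\sigma(\R^3)$ and the inclusions $W_q\subseteq\ker L$ and $\operatorname{range} L\subseteq V_q$. Recall that in this section $R_i=1$; by density it suffices to prove \eqref{eq:bound.Q_i.q} for $w\in\dot H^1_\sigma(\R^3)\cap\dot W^{1,q}_\sigma(\R^3)$ (on which $Q_iw$ is already defined), and since $\|ew\|_{L^q(B_i)}\le\|w\|_{\dot W^{1,q}(\R^3)}$ the operator then extends uniquely and consistently in $q$; by translation we take $X_i=0$. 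I would split $Q_i=Q_i^d+Q_i^q$ as in \eqref{def:Q^d}. For the simple dipole part Lemma \ref{lem:dipole.char} gives $Q_i^dw=w_S$ with $S=\fint_{B_1}ew$, so $|S|\le C_q\|ew\|_{L^q(B_1)}$ by Hölder, and the explicit formula \eqref{eq:dipole.poential} shows $|\nabla w_S|\le C|S|$ on $B_1$ and $|\nabla w_S(x)|\le C|S|\,|x|^{-3}$ as $|x|\to\infty$; hence $\nabla w_S\in L^q(\R^3)$ precisely because $q>1$, with $\|w_S\|_{\dot W^{1,q}(\R^3)}\le C_q\|ew\|_{L^q(B_1)}$. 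For the quadrupole part, $Q_i^qw$ is, by construction and \eqref{eq:characterizationWPerp}, a solution of the homogeneous Stokes equations in $\R^3\setminus\overline{B_1}$ which is force-free, torque-free and has vanishing symmetric dipole moment; on $B_1$ one has $eQ_i^qw=ew-S$ and $\fint_{\partial B_1}Q_i^qw=\fint_{B_1}\curl Q_i^qw=0$, so Korn's and Poincaré's inequalities on $B_1$ give $\|Q_i^qw\|_{W^{1,q}(B_1)}\le C_q\|ew\|_{L^q(B_1)}$, and hence the Dirichlet trace of $Q_i^qw$ on $\partial B_1$ is bounded in $W^{1-1/q,q}(\partial B_1)$ by $C_q\|ew\|_{L^q(B_1)}$. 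Since the force- and torque-free exterior Stokes problem on $\R^3\setminus\overline{B_1}$ is $L^q$-well-posed for $1<q<\infty$ (the force/torque constraint is what makes the solution decay fast enough to lie in $\dot W^{1,q}$), $\|Q_i^qw\|_{\dot W^{1,q}(\R^3)}\le C_q\|ew\|_{L^q(B_1)}$, and \eqref{eq:bound.Q_i.q} follows.

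With \eqref{eq:bound.Q_i.q} in hand, the boundedness of $L=\sum_iQ_i$ and the estimate \eqref{eq:bound.L.q} are proved by transplanting the Calderón--Zygmund argument of the proof of Proposition \ref{pro:char.L} to $L^q$. Write $Lw=\sum_iQ_i^qw+\sum_iQ_i^dw$. The $\dot W^{1,q}$-analogue of Lemma \ref{lem:decay.dipole.quadrupole} gives $|\nabla Q_i^qw(x)|\le C|x-X_i|^{-4}\|ew\|_{L^q(B_i)}$ for $x\notin B_\theta(X_i)$; since $|z|^{-4}$ is integrable away from the origin in $\R^3$ and the balls $B_{d_{\min}/2}(X_i)$ are pairwise disjoint, Young's inequality (after bounding the discrete sum over $i$ by a convolution against $\1_{|z|>c}|z|^{-4}$) controls $\|\sum_iQ_i^qw\|_{\dot W^{1,q}(\R^3)}$ by $C\|ew\|_{L^q(\bigcup_iB_i)}$, the uniformly finitely many near particles being absorbed directly through \eqref{eq:bound.Q_i.q}. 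For the dipole sum, Lemma \ref{lem:dipole.char} together with \eqref{eq:dipole.poential} yields $\nabla Q_i^dw(x)=\mathcal K(x-X_i)(ew)_i+R(x-X_i)(ew)_i$ for $x\notin B_1(X_i)$, where $\mathcal K$ is a fixed Calderón--Zygmund kernel (homogeneous of degree $-3$, smooth off the origin, with vanishing mean over spheres, of the same type as the one in the proof of Proposition \ref{pro:char.L}) and $|R(x)|\le C|x|^{-5}$; the $R$-term is summed exactly as the quadrupole term, and for the $\mathcal K$-term one introduces $g:=\sum_i(ew)_i|B_{d_{\min}/4}|^{-1}\1_{B_{d_{\min}/4}(X_i)}$ and decomposes $\sum_i\mathcal K(\,\cdot\,-X_i)(ew)_i$ into $\mathcal K\ast g$ plus discretization errors, precisely as the terms $A_1^i,A_2^i,A_3^i$ in the proof of Proposition \ref{pro:char.L}. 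The errors are controlled by $|\mathcal K(x-X_i)-\mathcal K(x-y)|\le Cd_{\min}|x-X_i|^{-4}$ and are summable as above, while $\|\mathcal K\ast g\|_{L^q(\R^3)}\le C_q\|g\|_{L^q(\R^3)}$ by the $L^q$-boundedness of Calderón--Zygmund operators; since $\|g\|_{L^q(\R^3)}\le C_q\,d_{\min}^{3/q-3}\big(\sum_i\|ew\|_{L^q(B_i)}^q\big)^{1/q}$ and $d_{\min}>2$, this gives $\|\sum_iQ_i^dw\|_{\dot W^{1,q}(\R^3)}\le C_q\|ew\|_{L^q(\bigcup_iB_i)}$ and hence \eqref{eq:bound.L.q}. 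Unlike in Proposition \ref{pro:char.L}, here the full $\dot W^{1,q}(\R^3)$-norm of $Lw$ is measured rather than $e(1-L)w$ on the particles only, so no extra factor $\phi_0$ is gained -- which is expected, since $L$ is not small.

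Finally, $W_q\subseteq\ker L$ is immediate: if $w\in W_q$ then $ew=0$ on each $B_i$, so $Q_iw=0$ by \eqref{eq:bound.Q_i.q}. For $\operatorname{range} L\subseteq V_q$, fix $i$; for $j\ne i$ the function $Q_jw$ solves the homogeneous Stokes equations in a neighbourhood of $\overline{B_i}$ (since $\overline{B_i}\cap\overline{B_j}=\emptyset$), whence $\int_{\partial B_i}\sigma[Q_jw,p]n=\int_{\partial B_i}(x-X_i)\times\sigma[Q_jw,p]n=0$ by the divergence theorem, while for $j=i$ these vanish because $Q_iw\in W_i^\perp$, cf.\ \eqref{eq:characterizationWPerp}; summing the series, which converges in $\dot W^{1,q}(\R^3)$ by \eqref{eq:bound.L.q}, and using that the defining conditions of $V_q$ are closed under $\dot W^{1,q}$-limits, gives $Lw\in V_q$. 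I expect the main obstacle to be \eqref{eq:bound.L.q}, and within it the dipole sum: the critical $|x-X_i|^{-3}$ decay forces the replacement of an absolutely convergent estimate by the singular-integral comparison together with the bookkeeping of all discretization errors -- the technical core of the proof of Proposition \ref{pro:char.L}, now run in $L^q$. A secondary point is the single-particle estimate for the quadrupole part, resting on $L^q$-regularity for the force-free, torque-free exterior Stokes problem together with Korn's inequality and a trace estimate on $B_1$.
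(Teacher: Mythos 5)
Your overall strategy is workable, but it takes a genuinely different route from the paper for the collective bound, and one step needs repair. For \eqref{eq:bound.L.q} the paper uses no decay estimates and no Calder\'on--Zygmund cancellation at all: it tests $Lw$ against $\varphi\in\dot W^{1,q'}_\sigma(\R^3)$, uses that $Q_iw\in V_{i,q}$ (obtained by density from the $q=2$ orthogonality, cf.\ \eqref{eq:zero.charge.q}) to replace $\varphi$ by $Q_i\varphi$ in each pairing, and then concludes from \eqref{eq:bound.Q_i.q}, Young's inequality and the disjointness of the balls, since $\sum_i\|e\varphi\|^{q'}_{L^{q'}(B_i)}\le\|\varphi\|^{q'}_{\dot W^{1,q'}(\R^3)}$; this duality argument gives $C=C(q)$ under mere non-overlapping in a few lines. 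Your route --- dipole/quadrupole splitting, decay estimates, and the discrete-to-continuum singular-integral comparison --- is essentially the machinery the paper reserves for Propositions \ref{pro:char.L} and \ref{pro:char.L.q}, where the gain of $\phi_0$ genuinely requires it. It can be made to work for \eqref{eq:bound.L.q}, but you then have to control $\nabla Lw$ in $L^q$ on all of $\R^3$, not merely on $\cup_iB_i$ with the self-term removed as in Proposition \ref{pro:char.L}; the $A_1^i,A_2^i,A_3^i$ decomposition must be redone at arbitrary points (self-terms near each particle, and the region away from all particles), which is real additional bookkeeping that your sketch does not carry out, and which the paper's duality trick avoids entirely.

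The step that does not stand as written is the single-particle estimate for the quadrupole part. There is no ``force-free, torque-free exterior Dirichlet problem'' to be well-posed: once Dirichlet data are prescribed on $\partial B_1$, the exterior solution is determined, and the vanishing of force and torque (and of the symmetric dipole moment) are properties of the specific function $Q_i^qw$ inherited by density from the $q=2$ projection, not boundary conditions one may impose. What your argument actually needs is the estimate for the exterior Dirichlet problem with zero-flux data, i.e.\ Theorem \ref{th:exist.B^c}, which is available only for $1<q<3$; for $q\ge 3$ the paper proceeds differently, defining $Q_iw$ through the $\dot H^1$ theory (using $W^{1,q}(B_i)\subset H^1(B_i)$) and deducing the $L^q$ bound on the gradient from the pointwise decay of Lemma \ref{lem:decay.dipole.quadrupole} combined with local regularity in an annulus. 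Your parenthetical ``the constraint makes the solution decay fast enough'' is the right intuition, but to make it a proof for $q\ge 3$ you must first establish the decay of $Q_i^qw$ (or of $Q_iw$, whose $|x|^{-3}$ gradient decay already suffices for every $q>1$, so the dipole/quadrupole split is unnecessary here) and then conclude as the paper does. The remaining points --- Korn--Poincar\'e inside $B_1$, the explicit dipole, $W_q\subset\ker L$, and $\operatorname{range} L\subset V_q$ via the divergence theorem for the stress of $Q_jw$ near $B_i$ together with the density argument for $j=i$ --- are fine and match the paper up to presentation.
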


\begin{prop} \label{co:decomposition.W_q}
	Let $1 < q < \infty$. Then, there exists $\bar \phi_0 > 0$ depending only on $q$ such that for all 
	$\phi_0 < \bar \phi_0$ and for all $w \in \dot W_\sigma^{1,q}(\R^3)$ there exists a unique decomposition $w = w_1 + w_2$
	with $w_1 \in W_q$, $w_2 \in V_q$.
	Moreover,
	\begin{align}
		\| w_1\|_{\dot W^{1,q}(\R^3)} &\leq C  \| w\|_{\dot W^{1,q}(\R^3)}, \label{eq:est.W_q}\\
		\| w_2\|_{\dot W^{1,q}(\R^3)} &\leq C  \|e w\|_{L^q(\cup_i B_i)} = C \|e w\|_{L^q(\cup_i B_i)} \label{eq:est.V_q},
	\end{align}
	where $C$ depends only on $\theta$ and $q$.
	Furthermore, for all $f \in \dot W^{-1,q}(\R^3)$ there exists a unique weak solution $v \in \dot W^{1,q}(\R^3)$ to \eqref{eq:weak.from}
	which is given by $v = P_{W_q} u$ where $u$ is the unique solution to \eqref{eq:defU} and $P_{W_q}$ denotes the projection in $\dot W^{1,q}_\sigma(\R^3)$ 	to 
	$W_q$.
\end{prop}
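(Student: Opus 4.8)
My plan is to deduce the proposition from the Hilbert‑space analysis of the previous section by an interpolation/bootstrap, exactly as announced there. I take as given: Proposition~\ref{pro:L.bounded.L^q} (so $L$, hence every $(1-L)^k$, is bounded on $\dot W^{1,q}_\sigma(\R^3)$ uniformly in the configuration, with $W_q\subset\ker L$ and $\operatorname{range}L\subset V_q$); Corollary~\ref{co:L^2.rate.phi} (so $\|(1-L)^k-P_W\|_{\dot H^1}\le(C\phi_0)^k$); and the extension Lemma~\ref{lem:extest} for general $q$. The steps, arranged so as to avoid circularity, are: (i) prove $(1-L)^k$ converges in operator norm on $\dot W^{1,q}_\sigma$ to a bounded idempotent $P$; (ii) identify $\operatorname{range}P=W_q$ and $\operatorname{range}(\Id-P)\subset V_q$, which already yields existence of the decomposition and the bound on $w_1$; (iii) establish the orthogonality $\int_{\R^3}\nabla v:\nabla\varphi=0$ for $v\in V_q$, $\varphi\in W_{q'}$ (and the symmetric version), a pure integration‑by‑parts fact, and combine it with existence of the decomposition \emph{at the conjugate exponent} to get $V_q\cap W_q=\{0\}$, hence uniqueness of the decomposition and of the weak solution; (iv) upgrade the bound on $w_2$ to \eqref{eq:est.V_q} and read off $v=P_{W_q}u$.

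\emph{Steps (i)--(ii).} Write $(1-L)^{k+1}-(1-L)^k=-(1-L)^kL$. On $\dot H^1_\sigma$ one has $(1-L)^kL=((1-L)^k-P_W)L$ because $P_WL=0$ (as $\operatorname{range}L\subset W^\perp$), whence $\|(1-L)^kL\|_{\dot H^1}\le C(C\phi_0)^k$; on $\dot W^{1,q}_\sigma$ one only has the crude configuration‑uniform bound $\|(1-L)^kL\|_{\dot W^{1,q}}\le C^{\,k}$ with $C=C(q)$. Picking a second exponent $q_1$ with $q$ strictly between $2$ and $q_1$ (e.g.\ $q_1=2q$ for $q>2$, $q_1=q/2$ for $q<2$; the case $q=2$ is Corollary~\ref{co:L^2.rate.phi}) and interpolating the operator $(1-L)^kL$ between $\dot W^{1,2}_\sigma$ and $\dot W^{1,q_1}_\sigma$ — homogeneous divergence‑free Sobolev spaces interpolate in the usual way — we get, for the corresponding $\theta=\theta(q,q_1)\in(0,1)$,
\begin{align}
	\|(1-L)^kL\|_{\dot W^{1,q}\to\dot W^{1,q}}\ \le\ C\bigl((C\phi_0)^{1-\theta}C_{q_1}^{\theta}\bigr)^k\ =:\ C\rho^k .
\end{align}
For $\phi_0<\bar\phi_0$ small enough (depending only on $q$) that $\rho<1$, the partial sums telescope, so $(1-L)^k$ is Cauchy in $\mathcal L(\dot W^{1,q}_\sigma)$; call the limit $P$, with $\|P\|\le 1+C\sum_{j\ge0}\rho^j$ depending only on $q$. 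Passing to the limit in $(1-L)^{k+1}$ yields $(1-L)P=P$, hence $P^2=P$ and $LP=0$. For the range: if $w\in\dot W^{1,q}_\sigma\cap\dot H^1_\sigma$ (a dense subset) then $(1-L)^kw\to Pw$ in $\dot W^{1,q}$ and $(1-L)^kw\to P_Ww$ in $\dot H^1$, and an a.e.-convergent subsequence of $\nabla(1-L)^kw$ forces $\nabla Pw=\nabla P_Ww$; since $P_Ww\in W$ we get $e(Pw)=0$ in $K$, i.e.\ $Pw\in W_q$. By density and closedness of $W_q$ this gives $\operatorname{range}P\subset W_q$, and $W_q\subset\ker L$ gives the reverse inclusion. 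Finally $(\Id-(1-L)^k)w=\sum_{j=0}^{k-1}(1-L)^jLw$ with each term in $\operatorname{range}L\subset V_q$, and $V_q$ is a closed $(1-L)$-invariant subspace of $\dot W^{1,q}_\sigma$ (invariance since it is a subspace; closedness since interior Stokes estimates give local convergence away from the particles and the force and torque functionals are continuous), so $(\Id-P)w\in V_q$. This proves existence of the decomposition $w=Pw+(\Id-P)w$ and $\|w_1\|_{\dot W^{1,q}}\le C\|w\|_{\dot W^{1,q}}$.

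\emph{Steps (iii)--(iv).} For $v\in V_q$ and $\varphi\in W_{q'}$, integration by parts gives $\int_{\R^3}\nabla v:\nabla\varphi=2\int_{\R^3\setminus K}ev:e\varphi=\int_{\R^3\setminus K}\sigma[v,p]:\nabla\varphi=-\sum_i\int_{\partial B_i}\sigma[v,p]n\cdot\varphi=0$, the boundary terms vanishing because $\varphi$ is a rigid motion on each $B_i$ while $v\in V_q$ has zero force and torque on each $\partial B_i$, and the term at infinity disappearing by the dipole decay of $v$; the same computation with the roles exchanged gives $\int_{\R^3}\nabla w:\nabla\psi=0$ for $w\in W_q$, $\psi\in V_{q'}$. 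Hence if $z\in V_q\cap W_q$ then $\int\nabla z:\nabla\varphi=0$ for all $\varphi\in W_{q'}\cup V_{q'}$; since the decomposition, already established in Step (ii) for \emph{every} exponent in $(1,\infty)$ — in particular $q'$, after replacing $\bar\phi_0$ by $\min\{\bar\phi_0(q),\bar\phi_0(q')\}$ — gives $W_{q'}+V_{q'}=\dot W^{1,q'}_\sigma$, and the pairing $(z,\varphi)\mapsto\int\nabla z:\nabla\varphi$ on $\dot W^{1,q}_\sigma\times\dot W^{1,q'}_\sigma$ is nondegenerate (Liouville for the Stokes system), we conclude $z=0$; this is both $V_q\cap W_q=\{0\}$ and uniqueness of the decomposition. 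Next, $\|e\cdot\|_{L^q(\cup_iB_i)}$ is an equivalent norm on $V_q$: for $v\in V_q$, Lemma~\ref{lem:extest} (its mean‑value hypotheses hold on each $B_i$) produces $\varphi_i$ supported in $B_{\theta R_i}(X_i)$ — disjoint supports by \eqref{eq:theta.separation} — with $e\varphi_i=ev$ in $B_i$ and $\|\varphi_i\|_{\dot W^{1,q}}\le C\|ev\|_{L^q(B_i)}$, so $\varphi:=\sum_i\varphi_i$ obeys $v-\varphi\in W_q\subset\ker L$, hence $Lv=L\varphi$ and $\|Lv\|_{\dot W^{1,q}}\le C\|ev\|_{L^q(\cup_iB_i)}$ by \eqref{eq:bound.L.q}; since $Pv=0$ (as $v\in V_q$ and $V_q\cap W_q=\{0\}$), writing $v=\sum_{j\ge0}((1-L)^j-P)Lv$ and summing $\|(1-L)^j-P\|\le C\rho^j$ gives $\|v\|_{\dot W^{1,q}}\le C\|ev\|_{L^q(\cup_iB_i)}$. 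Applied to $v=w_2$ (with $ew_2=ew$ in $\cup_iB_i$ since $ew_1=0$ there) this is \eqref{eq:est.V_q}. Finally set $v:=Pu=P_{W_q}u$; then $u-v=(\Id-P)u\in V_q$, so by the orthogonality and the weak form of \eqref{eq:defU}, $\int\nabla v:\nabla\varphi=\int\nabla u:\nabla\varphi=\langle f,\varphi\rangle$ for all $\varphi\in W_{q'}$, i.e.\ $v\in W_q$ solves \eqref{eq:weak.from}; and a difference $z\in W_q$ of two weak solutions is orthogonal to $W_{q'}$ by hypothesis and to $V_{q'}$ by the orthogonality relation, hence to all of $\dot W^{1,q'}_\sigma$, so $z=0$.

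The step I expect to be the main obstacle is (i): on $\dot W^{1,q}$ the only available quantitative bound is configuration‑uniform but grows exponentially in $k$, so convergence must be squeezed out by interpolation with the $\dot H^1$ decay, which works only if one interpolates the \emph{increments} $(1-L)^kL$ — on which the two bounds are genuinely comparable thanks to $P_WL=0$ — rather than $(1-L)^k$ itself, accepts a rate that is merely $\rho<1$ instead of $O(\phi_0)$, and lets $\bar\phi_0$ depend on $q$ since one cannot interpolate \emph{up to} the endpoint exponent $q$. Everything afterwards — closedness of $V_q$, nondegeneracy of the Dirichlet pairing, and keeping the uniqueness argument non‑circular by establishing existence of the decomposition at all exponents first — is routine.
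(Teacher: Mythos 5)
Your proposal is correct and follows essentially the same route as the paper: the decisive step -- interpolating the increments $L(1-L)^k$ between the $\dot H^1$ decay of Corollary \ref{co:L^2.rate.phi} and the configuration-uniform $\dot W^{1,q_1}$ bound of Proposition \ref{pro:L.bounded.L^q} to obtain a geometrically convergent series in $\dot W^{1,q}$ once $\phi_0$ is small (with $\bar\phi_0$ depending on $q$) -- is exactly the paper's argument, and your remaining steps (identifying the limit projection, uniqueness via duality between $q$ and $q'$, the extension lemma for the bound on $w_2$) are only repackagings of what the paper does. Two minor points: for $1<q<2$ the auxiliary exponent must be chosen in $(1,q)$ (your example $q_1=q/2$ can fall below $1$), and the boundary term at infinity in your integration by parts cannot be justified by ``dipole decay of $v$'' when the particles fill all of space -- one should instead average over large annuli using $\nabla v\in L^q$ together with a pressure estimate, or argue by density from the $q=2$ characterization as the paper implicitly does.
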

\begin{rem}
	For $q \geq 3$, elements $u \in \dot W^{1,q}(\R^3)$ are only determined up to a constant. The uniqueness statement in the theorem above has to be understood in this context.
\end{rem}

\begin{lem} \label{lem:decay.q}
	For $w \in \dot W^{1,q}_\sigma(\R^3)$ let $S:= \fint_{B_i} e w$ and define
\begin{align}
	Q_i^d w(x) := w_{S}(x-X_i),
\end{align}
where $w_{S}$ is given by \eqref{eq:dipole.poential}. Moreover, let 
\begin{align}
	Q_i^q  = Q_i - Q_i^d.
\end{align}
Then
\begin{align} \label{eq:est.dipole.quadrupole.q}
	\|Q_i^q w\|_{\dot W^{1,q}(\R^3)} + \|Q_i^d w\|_{\dot W^{1,q}(\R^3)} \leq C \| e w\|_{L^q(B_i)}.
\end{align}
Moreover, for all $\theta > 1$ and for all $x \in \IR^3 \backslash B_{\theta}(X_i)$,
\begin{align}
		\label{eq:dipoleEstimate.q}
		|Q_i w(x)| \leq C \frac{1}{|x-X_i|^2} \|e w\|_{L^q(B_i)}, &&
		|\nabla Q_i w(x)| \leq C \frac{1}{|x-X_i|^3}  \|e w\|_{L^q(B_i)}, \\
		\label{eq:quadrupoleEstimate.q}
		|Q_i^q w(x)| \leq C \frac{1}{|x-X_i|^3} \|e w\|_{L^q(B_i)}, &&
		|\nabla Q_i^q w(x)| \leq C \frac{1}{|x-X_i|^4} \|e w\|_{L^q(B_i)},
	\end{align}
	where $C$ depends only on $\theta$ and $q$.
\end{lem}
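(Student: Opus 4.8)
The plan is to transcribe the proofs of the $q=2$ statements, Lemmas \ref{lem:dipole.char} and \ref{lem:decay.dipole.quadrupole}, replacing the Hilbert space structure of $\dot H^1_\sigma(\R^3)$ by the duality pairing between $\dot W^{1,q'}_\sigma(\R^3)$ and $\dot W^{-1,q}_\sigma(\R^3)$, and using the a priori bound \eqref{eq:bound.Q_i.q} of Proposition \ref{pro:L.bounded.L^q} in place of Lemma \ref{lem:Q.scalar.product}. Since $R_i=1$ throughout this section, I may also assume $X_i=0$ by translation. The estimates for the simple dipole $Q_i^d w = w_S$ are read off from the explicit formula \eqref{eq:dipole.poential}: H\"older's inequality gives $|S| = |\fint_{B_1} ew| \le C\|ew\|_{L^q(B_1)}$; inside $B_1$ the field $w_S$ is linear with $|\nabla w_S| = |S|$; and for $|x|\ge 1$ one has $|w_S(x)| \le C|S|\,|x|^{-2}$ and $|\nabla w_S(x)| \le C|S|\,|x|^{-3}$. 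Since $q>1$, the function $|x|^{-3q}$ is integrable on $\{|x|>1\}$, so integrating these bounds gives both $\|Q_i^d w\|_{\dot W^{1,q}(\R^3)} \le C|S| \le C\|ew\|_{L^q(B_1)}$ and the pointwise decay of $Q_i^d w$. Combining $Q_i^q w = Q_i w - Q_i^d w$ with the triangle inequality and \eqref{eq:bound.Q_i.q} then gives the bound on $\|Q_i^q w\|_{\dot W^{1,q}}$ in \eqref{eq:est.dipole.quadrupole.q}, and the decay of $Q_i w$ in \eqref{eq:dipoleEstimate.q} will follow from that of $w_S$ once the (faster) decay of $Q_i^q w$ is established.

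For the decay of $Q_i^q w$, I attach to it the source $f := -\Delta Q_i^q w + \nabla p$, i.e. the functional on $\dot W^{1,q'}_\sigma(\R^3)$ defined by $\langle f, \varphi\rangle = \int_{\R^3}\nabla Q_i^q w : \nabla \varphi$. Then $f \in \dot W^{-1,q}_\sigma(\R^3)$ with $\|f\|_{\dot W^{-1,q}_\sigma} \le \|Q_i^q w\|_{\dot W^{1,q}} \le C\|ew\|_{L^q(B_1)}$; since $Q_i^q w$ solves the homogeneous Stokes equations in $\R^3\setminus\overline{B_1}$, $f$ is supported in $\overline{B_1}$ (that is, $\langle f,\varphi\rangle = 0$ whenever $\varphi = 0$ in $B_1$, so $\langle f,\varphi\rangle$ depends only on $\varphi|_{B_1}$), and $Q_i^q w = \Phi \ast f$ where $\Phi$ is the fundamental solution of the Stokes equations. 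I then need that $f$ annihilates every $\varphi \in \dot W^{1,q'}_\sigma(\R^3)$ whose restriction to $B_1$ has the form $a + My$ with $a\in\R^3$ and $M$ traceless, i.e. a constant plus an infinitesimal rotation plus a traceless strain. For the constant and the rotation this is a standard integration by parts using that $Q_i^q w$ is force- and torque-free (both inherited from $Q_i w \in V_{i,q}$ and from $w_S$, whose force and torque vanish by parity and by the symmetry of $S$) and that $\dv Q_i^q w = 0$. The remaining case of a traceless strain is the point replacing the orthogonality argument of Lemma \ref{lem:dipole.char}: testing against the dipole potential $w_T$ of \eqref{eq:dipole.poential}, $T\in\Sym_0(3)$ (which lies in $\dot W^{1,q'}_\sigma(\R^3)$ since $q'>1$), and invoking \eqref{eq:dipole.charge} together with the symmetry of the Stokes pairing,
\[
	\langle f, w_T \rangle = 5 \int_{\partial B_1} (Ty)\cdot Q_i^q w = 5\, T : \int_{B_1} e Q_i^q w = 5\, T : \left( \int_{B_1} ew - |B_1|\, S \right) = 0,
\]
where I used the divergence theorem with $n=y$ on $\partial B_1$, the identity $e Q_i^q w = ew - S$ in $B_1$ (from $e Q_i w = ew$ and $e w_S = S$ there), and the choice $S = \fint_{B_1} ew$; since $w_T$ agrees with $Ty$ on $B_1$ and $f$ is supported in $\overline{B_1}$, this gives the claimed annihilation.

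Granted these moment conditions, the decay estimates follow as in Lemma \ref{lem:decay.dipole.quadrupole}. Fix $|x| > \theta$, and let $\ell_x$ be the affine-in-$y$ field — a constant vector plus a linear term with traceless matrix — equal to the first-order Taylor polynomial of $\Phi(x-\cdot)$ at $B_1$; by the previous paragraph $\langle f, \varphi \rangle = 0$ for any $\varphi \in \dot W^{1,q'}_\sigma(\R^3)$ restricting to $\ell_x$ on $B_1$, so, using that $f$ is supported in $\overline{B_1}$,
\[
	Q_i^q w(x) = (\Phi\ast f)(x) = \langle f, E(\Phi(x-\cdot) - \ell_x) \rangle,
\]
where $E(\Phi(x-\cdot) - \ell_x) \in \dot W^{1,q'}_\sigma(\R^3)$, supported in $B_\theta$, is the extension from Lemma \ref{lem:extest} (in the form of Remark \ref{rem:ext.est}(ii)), satisfying $\|\nabla E(\Phi(x-\cdot) - \ell_x)\|_{L^{q'}(B_\theta)} \le C\|\nabla_y\Phi(x-\cdot) - \fint_{B_1}\nabla_y\Phi(x-\cdot)\|_{L^{q'}(B_1)}$. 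Since $\nabla_y\Phi$ is smooth away from the origin with $|\nabla_y^2\Phi(z)| \le C|z|^{-3}$, the oscillation of $\nabla_y\Phi(x-\cdot)$ over $B_1$ is at most $C|x|^{-3}$, and H\"older's inequality gives $|Q_i^q w(x)| \le \|f\|_{\dot W^{-1,q}_\sigma}\,\|E(\Phi(x-\cdot) - \ell_x)\|_{\dot W^{1,q'}} \le C|x|^{-3}\|ew\|_{L^q(B_1)}$. Differentiating this identity in $x$ — which lowers the decay of $\Phi$ and of its $y$-derivatives by one power — yields $|\nabla Q_i^q w(x)| \le C|x|^{-4}\|ew\|_{L^q(B_1)}$, i.e. \eqref{eq:quadrupoleEstimate.q}; adding the decay of $w_S$ from the first paragraph gives \eqref{eq:dipoleEstimate.q}.

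The main obstacle is the second step. In the Hilbert space setting the identification of the dipole (stresslet) moment of $Q_i w$ with $\fint_{B_i}ew$ was an immediate byproduct of $Q_i^d$ being an orthogonal projection; here it has to be extracted from the weak formulation through the reciprocal identity. One must also take care that the moment conditions, which formally test $f$ against affine fields that do not themselves lie in $\dot W^{1,q'}(\R^3)$, are meaningful — this is precisely why one works with the compactly supported source $f$ and replaces those fields by genuinely admissible divergence-free fields (the $w_T$ and the extensions produced by Lemma \ref{lem:extest}) agreeing with them on $B_1$. Apart from this, everything is a routine transcription of the $q=2$ arguments, the only other inputs being \eqref{eq:bound.Q_i.q} and the validity of Lemma \ref{lem:extest} and of the elementary computations with $\Phi$ for every $1<q<\infty$.
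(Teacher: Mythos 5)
Your proof is correct and follows essentially the same route as the paper's: the explicit formula \eqref{eq:dipole.poential} for the $\dot W^{1,q}$ bound on $Q_i^d$, the triangle inequality with \eqref{eq:bound.Q_i.q} for $Q_i^q$, and the duality argument with the compactly supported source, the subtraction of an affine field, and the extension of Lemma \ref{lem:extest} for the decay estimates --- and your verification that $\langle f, w_T\rangle = 0$ via the choice $S=\fint_{B_i} e w$ is precisely the $L^q$ analogue of Lemma \ref{lem:dipole.char} that the paper leaves implicit when it says the decay is ``proved in the same way as in Lemma \ref{lem:decay.dipole.quadrupole}''. The only cosmetic point is that Lemma \ref{lem:extest} requires vanishing mean on $\partial B_1$, so the constant part of your Taylor polynomial $\ell_x$ should be adjusted to the boundary average (harmless, since constants are annihilated by $f$).
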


Corresponding to Proposition \ref{pro:char.L}, we deduce the following statement.
\begin{prop} \label{pro:char.L.q}
Let $1 < q < \infty$. Then, there exists $\bar \phi_0 > 0$ depending only on $q$ such that for all 
	$\phi_0 < \bar \phi_0$ there exists an operator $T \colon V_q \to V_q$ such that $\|T\| \leq C$, where $C$ depends only on $q$ and 
	\begin{align}  
		L = (1+\phi_0 T)P_{V_q}.
	\end{align}
\end{prop}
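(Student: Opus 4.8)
The plan is to mirror the proof of Proposition~\ref{pro:char.L} almost verbatim, replacing $L^2$ by $L^q$, the Cauchy--Schwarz inequality by Hölder's inequality, and the $L^2$-boundedness of Calderón--Zygmund operators by their boundedness on $L^q$ for every $1<q<\infty$; the decay bounds needed are now those of Lemma~\ref{lem:decay.q}. The only structural change concerns the construction of $T$, since self-adjointness of $L$ is no longer at hand. First I would fix $\phi_0<\bar\phi_0$ small enough for Propositions~\ref{pro:L.bounded.L^q} and~\ref{co:decomposition.W_q} to apply. Proposition~\ref{pro:L.bounded.L^q} gives $W_q\subset\ker L$ and $\operatorname{range}L\subset V_q$, while Proposition~\ref{co:decomposition.W_q} provides the splitting $\dot W^{1,q}_\sigma(\R^3)=W_q\oplus V_q$ and the fact that $\|e\cdot\|_{L^q(\cup_i B_i)}$ is an equivalent norm on $V_q$, with constants depending only on $\theta$ and $q$. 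In particular $L$ maps $V_q$ into $V_q$, and since $LP_{V_q}=L$ (as $L$ annihilates $W_q$), I would define $T:=\phi_0^{-1}\bigl(L|_{V_q}-\Id_{V_q}\bigr)\colon V_q\to V_q$; a direct computation then gives $(1+\phi_0 T)P_{V_q}=L$, so the whole content is the bound $\|T\|\le C=C(\theta,q)$.

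Using the equivalent norm on $V_q$ and the identity $\phi_0 Tv=-(1-L)v$ for $v\in V_q$, this bound is equivalent to
\begin{align}\label{eq:Tq.reduction}
	\|e(1-L)v\|_{L^q(\cup_i B_i)}\le C\phi_0\,\|ev\|_{L^q(\cup_i B_i)}\qquad\text{for all }v\in V_q.
\end{align}
Because $eQ_iv=ev$ in $B_i$ (the projection $P_iv$ is a rigid body motion there, which passes to the $\dot W^{1,q}$-extension by density), one has, exactly as for $q=2$, $e(1-L)v=-\sum_{j\ne i}\bigl(eQ_j^d v+eQ_j^q v\bigr)$ in $B_i$, so that \eqref{eq:Tq.reduction} reduces to the $\dot W^{1,q}$-analogues of \eqref{eq:est.sum.Q^d}--\eqref{eq:est.sum.Q^q}:
\begin{align}\label{eq:Tq.two.bounds}
	\sum_i\Bigl\|\sum_{j\ne i}eQ_j^d v\Bigr\|_{L^q(B_i)}^q+\sum_i\Bigl\|\sum_{j\ne i}eQ_j^q v\Bigr\|_{L^q(B_i)}^q\le C\phi_0^q\,\|ev\|_{L^q(\cup_i B_i)}^q.
\end{align}

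For the quadrupole part I would use the pointwise bound $|eQ_j^q v|\le C|X_i-X_j|^{-4}\|ev\|_{L^q(B_j)}$ on $B_i$ from Lemma~\ref{lem:decay.q}; since $|B_i|=|B_1|$ is fixed, passing to the supremum and back to $L^q(B_i)$ costs only a constant, and the resulting discrete sum $\sum_i\bigl(\sum_{j\ne i}|X_i-X_j|^{-4}b_j\bigr)^q$, $b_j=\|ev\|_{L^q(B_j)}$, is handled by Hölder in $j$ together with $\sup_i\sum_{j\ne i}|X_i-X_j|^{-4}\le C\dmin^{-4}$ (comparison with an integral) and the symmetry of the kernel, i.e. the $\ell^q$ Schur test, giving a bound $C\dmin^{-4q}\|ev\|_{L^q(\cup_i B_i)}^q$; since $\dmin\ge2\theta$ under \eqref{eq:theta.separation} (recall $R_i\equiv1$), this is $\le C(\theta,q)\dmin^{-3q}\|ev\|_{L^q(\cup_i B_i)}^q=C\phi_0^q\|ev\|_{L^q(\cup_i B_i)}^q$. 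For the dipole part I would use \eqref{eq:dipole.poential} to write $eQ_j^d v(x)=\mathcal K(x-X_j)(ev)_j+R(x-X_j)$ with $|(ev)_j|\le C\|ev\|_{L^q(B_j)}$ and a remainder $R$ decaying like $|x|^{-5}$ (estimated as the quadrupole term), and split $\sum_{j\ne i}\mathcal K(x-X_j)(ev)_j=A_1^i(x)+A_2^i(x)+A_3^i$ exactly as in \eqref{eq:sum.to.integral.dipole}: $A_1^i,A_2^i$ carry an extra $\dmin$ from a kernel difference and are controlled by the $|X_i-X_j|^{-4}$ estimate above, while the constant $A_3^i$ is an average over $B_{\dmin/4}(X_i)$ of $\mathcal K\ast(g-g_i)$ with $g:=\sum_j(ev)_j|B_{\dmin/4}|^{-1}\1_{B_{\dmin/4}(X_j)}$ and $g_i$ its $i$-th summand. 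Then, by Jensen, disjointness of the balls $B_{\dmin/4}(X_i)$, the $L^q$-boundedness of $\mathcal K\ast\cdot$, and $\sum_i\|g_i\|_{L^q}^q=\|g\|_{L^q}^q\le C\dmin^{3-3q}\|ev\|_{L^q(\cup_i B_i)}^q$,
\begin{align}
	\sum_i\|A_3^i\|_{L^q(B_i)}^q&\le C\dmin^{-3}\Bigl(\|\mathcal K\ast g\|_{L^q(\R^3)}^q+\sum_i\|\mathcal K\ast g_i\|_{L^q(\R^3)}^q\Bigr)\\
	&\le C\dmin^{-3}\|g\|_{L^q(\R^3)}^q\le C\phi_0^q\,\|ev\|_{L^q(\cup_i B_i)}^q.
\end{align}
Collecting these estimates yields \eqref{eq:Tq.two.bounds}, hence \eqref{eq:Tq.reduction}, hence the proposition.

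The step I expect to be the main obstacle is the Calderón--Zygmund estimate for $A_3^i$: one has to know that $\mathcal K$ --- the symmetrized gradient of the leading, degree $(-2)$ piece of the dipole potential in \eqref{eq:dipole.poential} --- is a genuine Calderón--Zygmund kernel, so that $g\mapsto\mathcal K\ast g$ is bounded on $L^q$ for all $1<q<\infty$, and, more delicately, one must keep track throughout that every constant depends only on $\theta$ and $q$ and never on the number or the positions of the particles. Conceptually, the only genuinely new input compared with Proposition~\ref{pro:char.L} is that the existence of $T$ now has to be imported from the decomposition of Proposition~\ref{co:decomposition.W_q} --- itself obtained from the $q=2$ convergence result by interpolation --- in place of self-adjointness of $L$.
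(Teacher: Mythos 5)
Your proposal is correct and follows essentially the same route as the paper, which itself gives no separate argument but declares the proof ``completely analogous'' to Proposition \ref{pro:char.L}, with the ingredients you use (Proposition \ref{pro:L.bounded.L^q}, the decomposition and equivalent norm from Proposition \ref{co:decomposition.W_q}, and the decay estimates of Lemma \ref{lem:decay.q}). In particular, your replacement of the self-adjointness argument by defining $T$ through $W_q\subset\ker L$, $\operatorname{range}L\subset V_q$ and the splitting $\dot W^{1,q}_\sigma=W_q\oplus V_q$ is exactly the adaptation the paper intends.
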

Since the proof of Proposition \ref{pro:char.L.q} is completely analogous to the proof of Proposition \ref{pro:char.L}, we refrain from repeating the details.
Moreover, Theorem \ref{th:L^q.rate.phi} follows directly by combining Proposition \ref{pro:char.L.q} with Proposition \ref{co:decomposition.W_q}.

In the following subsections, we proof Proposition \ref{pro:L.bounded.L^q}, Proposition \ref{co:decomposition.W_q} and Lemma \ref{lem:decay.q}.

\subsection{Proof of Proposition \ref{pro:L.bounded.L^q} and Lemma \ref{lem:decay.q}}

We use the following result taken from \cite[Exercise V.5.1]{Galdi11} 
\begin{thm} \label{th:exist.B^c}
	Let $1<q<3$ and let $g \in  W^{1- 1/q,q}(\partial B)$. Consider the problem
	\begin{equation} \label{eq:Single.L^q}
	\begin{aligned}
		- \Delta w + \nabla p&= 0 \quad \text{in } \R^3 \setminus B, \\
		w &= g \quad \text{on } \partial B.
	\end{aligned}
	\end{equation}
	If
	\begin{align}
		\int_{\partial B} g = 0,
	\end{align}
	then, \eqref{eq:Single.L^q} has a unique weak solution $w \in \dot W_\sigma^{1,q}(\R^3 \setminus B)$ which satisfies
	\begin{align}
		 \|\nabla w \|_{L^p(\R^3 \setminus B)} \leq C \| g\|_{W^{1- 1/q,q}(\partial B)}.
	\end{align}
\end{thm}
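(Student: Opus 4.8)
Since Theorem~\ref{th:exist.B^c} is essentially the classical $L^q$-theory for the exterior Stokes problem, I would follow the standard route (see \cite{Galdi11}); after a rescaling assume $B = B_1(0)$. The plan is, \emph{first}, to reduce to homogeneous boundary data. Since $\int_{\partial B} g = 0$ forces the flux condition $\int_{\partial B} g \cdot n = 0$, the datum admits a compactly supported solenoidal extension: take any $W^{1,q}$-extension of $g$ to the annulus $B_2 \setminus B_1$ vanishing on $\partial B_2$, correct its divergence by the Bogovskii operator on the annulus (admissible since the divergence has zero mean there), and extend by $0$ outside $B_2$; this yields $G \in \dot W^{1,q}_\sigma(\R^3 \setminus B)$ with $\supp G \subset \overline{B_2 \setminus B_1}$, $G|_{\partial B} = g$, and $\|\nabla G\|_{L^q} \le C \|g\|_{W^{1-1/q,q}(\partial B)}$. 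Writing $w = \tilde w + G$, it remains to solve uniquely $-\Delta \tilde w + \nabla p = F := \Delta G$ in $\R^3 \setminus B$ with $\tilde w|_{\partial B} = 0$, $\dv \tilde w = 0$, $\tilde w \in \dot W^{1,q}_\sigma$, where $F \in \dot W^{-1,q}(\R^3)$ is supported in $\overline{B_2 \setminus B_1}$ with $\|F\|_{\dot W^{-1,q}} \le C \|g\|_{W^{1-1/q,q}(\partial B)}$, together with the bound $\|\nabla \tilde w\|_{L^q} \le C \|F\|_{\dot W^{-1,q}}$.

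The heart of the matter is this a priori estimate, which I would obtain by localization: on the bounded domain $B_3 \setminus B_1$ invoke the interior-and-boundary $L^q$-regularity for the Stokes system in a smooth bounded domain, and on $\R^3 \setminus B_2$, after cutting off, use the whole-space Calder\'on--Zygmund estimate for the Stokes system, valid for $1 < q < \infty$. The cutoff produces commutator terms supported in the fixed annulus $B_3 \setminus B_2$, involving only $\tilde w$ and $\nabla \tilde w$ and no derivatives of $F$; being of lower order, they are absorbed by combining the decay of $\tilde w$ at infinity with the uniqueness statement (a Liouville theorem) through a compactness/contradiction argument. The hypothesis $\int_{\partial B} g = 0$ is exactly what makes this work uniformly for $1 < q < 3$: testing the Lorentz reciprocal theorem against the Stokes flow generated by a rigid translation of $B$ --- whose surface traction on $\partial B$ is constant --- one finds that the net force $\int_{\partial B} \sigma[w,p] n$ equals a nonzero multiple of $\int_{\partial B} g$, hence vanishes, so the exterior flow decays like $|x|^{-2}$ and its gradient like $|x|^{-3}$, which lies in $L^q(\R^3 \setminus B_2)$ for every $q > 1$ (for $3/2 < q < 3$ this monopole removal is not needed for $\dot W^{1,q}$-membership, but it still holds).

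For existence I would first solve the $q = 2$ case directly by Riesz representation in the Hilbert space $\{\varphi \in \dot H^1_\sigma(\R^3 \setminus B) : \varphi|_{\partial B} = 0\}$, obtaining $\|\nabla \tilde w\|_{L^2} \le \|\nabla G\|_{L^2}$, and for general $1 < q < \infty$ upgrade this either via the Stokes hydrodynamic single- and double-layer potentials on the sphere $\partial B$ (bounded on $W^{1-1/q,q}(\partial B)$, with associated boundary integral operator Fredholm of index zero, so injectivity from the a priori estimate yields invertibility) or by density together with the a priori estimate. Uniqueness reduces to $q = 2$: a homogeneous $\dot W^{1,q}_\sigma$-solution is, by interior regularity and the decay just described, also an element of $\dot H^1_\sigma(\R^3 \setminus B)$, whence $\|\nabla w\|_{L^2}^2 = -\int_{\partial B} \sigma[w,p] n \cdot w = 0$ since $w|_{\partial B} = 0$ and there is no contribution from infinity. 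The restriction $q < 3$ is used only to identify $\dot W^{1,q}(\R^3 \setminus B)$ with a space of genuine functions via the embedding $\dot W^{1,q} \hookrightarrow L^{q^\ast}$, so that the solution, tending to $0$ at infinity, is uniquely pinned down.

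The main obstacle will be closing the a priori $L^q$-estimate with a constant depending only on $q$ --- patching the bounded-domain Stokes regularity to the exterior Calder\'on--Zygmund bound and absorbing the lower-order commutator terms --- which is precisely where the Liouville/uniqueness input and the $|x|^{-3}$ decay of $\nabla w$ forced by $\int_{\partial B} g = 0$ are indispensable, most visibly near the endpoint $q = 3/2$, below which a flow with nonzero net force fails to belong to $\dot W^{1,q}$ at all.
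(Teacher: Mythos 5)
The paper does not prove this statement at all: it is quoted verbatim from Galdi (Exercise V.5.1) and used as a black box, so the only meaningful comparison is with the classical exterior $L^q$-theory. Your outline is indeed the standard route (compactly supported solenoidal lifting of the boundary datum, localized Calder\'on--Zygmund plus bounded-domain Stokes regularity for the a priori bound, $q=2$ by Riesz representation, uniqueness by duality, and the identification via the reciprocal theorem of $\int_{\partial B} g$ with the net force, which is exactly what gives the $|x|^{-3}$ decay of $\nabla w$ needed below $q=3/2$).

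There is, however, one concrete false step at the very start: $\int_{\partial B} g = 0$ does \emph{not} force $\int_{\partial B} g\cdot n = 0$. The datum $g = n$ is a counterexample: its vector mean over the sphere vanishes, yet its flux equals $|\partial B|$. For such $g$ (admissible under the hypothesis) your compactly supported solenoidal extension cannot exist, since the divergence theorem applied in the annulus $B_2\setminus B_1$ forces $\int_{\partial B} G\cdot n = 0$, and correspondingly the Bogovskii correction requires the divergence of the crude extension to have zero mean, which fails precisely when the flux of $g$ is nonzero. The theorem is still true in this case, and the repair is standard: first subtract the explicit source flow $s(x) = \frac{\Phi}{4\pi}\,\frac{x - X}{|x - X|^3}$ with $\Phi = \int_{\partial B} g\cdot n$, which solves the Stokes system in $\R^3\setminus \overline{B}$ with constant pressure, has $\nabla s \in L^q$ for every $q>1$, removes the flux, and leaves $\int_{\partial B} g$ unchanged (so the mean-zero hypothesis persists for the remainder); then your construction applies to $g - s|_{\partial B}$. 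Note that in the paper's actual application of the theorem (the extension of $Q_i$ to $\dot W^{1,q}_\sigma(\R^3)$ in Proposition \ref{pro:L.bounded.L^q}) the boundary datum is the trace of a globally divergence-free field minus a rigid body motion, so its flux vanishes automatically and this issue never surfaces there; but as a proof of the theorem as stated, the reduction needs the extra splitting. The remaining ingredients of your sketch (the compactness absorption of the cutoff commutators and the Fredholm/density upgrade from $q=2$) are the expected, if labour-intensive, parts of the classical argument.
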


\begin{proof}[Proof of Proposition \ref{pro:L.bounded.L^q}]
Using the above theorem, we extend the operators $Q_i$ to $\dot W^{1,q}_\sigma(\R^3)$, $q < 3$, by solving for $w \in \dot W^{1,q}_\sigma(\R^3)$ the problem
	\begin{equation} \label{eq:Q_i.L^q}
	\begin{aligned}
		- \Delta Q_i w + \nabla p&= 0 \quad \text{in } \R^3 \setminus B_i, \\
		Q_i w &= w - \fint_{\partial B_i} w \dd x - \frac{1}{2} \fint_{B_i} \curl w \dd x \times (\cdot - X_i) \quad \text{in }  B_i. \\
	\end{aligned}
	\end{equation}
	For $q = 2$, this definition coincides with the original definition
	of $Q_i$ as an orthogonal projection due to Lemma \ref{lem:projectionInsideParticle}.  Moreover, by the above theorem,
	\begin{align} \label{eq:bound.Q_i.q.1}
		\|Q_i w\|_{\dot W^{1,q}(\R^3)} 
		&\leq C_q \left\|w - \fint_{\partial B_i} w \dd x - \frac{1}{2} \fint_{B_i} \curl w \dd x \times (\cdot - X_i)\right\|_{W^{1,q}(B_i)} \\
		&\leq C_q \|e w\|_{L^q(B_i)},
	\end{align}
	where the last inequality follows from a Korn-Poincar\'e inequality (see e.g. \cite[Lemma 4.4]{NiethammerSchubert19})
	for functions with 
	\begin{align}
		\fint_{\partial B_i} w = \fint_{ B_i} \curl w = 0.
	\end{align}
	
	For $q > 3$, we notice that $w \in \dot W^{1,q}(\R^3)$ implies
	$w|_{B_i} \in H^1(B_i)$. Hence $Q_i w$ is well-defined in $\dot H^1_\sigma(\R^3)$. Moreover, using the decay estimates from Lemma \ref{lem:decay.dipole.quadrupole},
	we have for all $x \in \R^3 \setminus B_2(X_i)$
	\begin{align}
		|(Q_i w)(x)| \leq C \frac{1}{|x-X_i|^2} \|e w\|_{L^q(B_i)}, && |(\nabla Q_i w)(x)| \leq C \frac{1}{|x-X_i|^3} \|e w\|_{L^q(B_i)}
	\end{align}
	Combining this estimate with standard regularity theory for the Stokes equations in the annulus $B_2(X_i) \setminus B_1(X_i)$, we deduce 
	that \eqref{eq:bound.Q_i.q} also holds in this case.
	
	\medskip
	
It remains to show that the operator $L = \sum_i Q_i$ is also bounded on $\dot W^{1,q}_\sigma(\R^3)$ with $W_q \subset \ker L$ and
$\operatorname{range} L \subset V_q$  and that \eqref{eq:bound.L.q} holds.

	Clearly, if $e w = 0$, in $\cup_i B_i$, then $Lw =0$.
	Let $w \in \dot W^{1,q}(\R^3)$ with $\|e w\|_{L^q(\cup_i B_i)}  = 1$. Then, we claim
	\begin{align} \label{eq:zero.charge.q}
	    \int_{\R^3} \nabla (Q_i w) \cdot \nabla \varphi = 0 \qquad \text{for all } \varphi \in  W_{i,q'}, 
	\end{align}	 
	 where $q'$ is the dual H\"older exponent. For $q=2$, this is immediate from the original definition of $Q_i$ as the orthogonal projection to $W_i^\perp$.
	 For $q \neq 2$, \eqref{eq:zero.charge.q} thus follows by density and continuity of $Q_i$ due to \eqref{eq:bound.Q_i.q}.	 
	 In the same way as one obtains the characterization $W^\perp = V_2$ (see \cite[Lemma 4.1]{NiethammerSchubert19}), equation \eqref{eq:zero.charge.q} implies $Q_i w \in V_{i,q}$. Since $V_{i,q} \subset V_q$ this shows $L w \subset V_q$ once we have proved that $L$ is well defined.
	
	Let $ \varphi \in \dot W_\sigma^{1,q'}(\R^3)$ with  $\|\varphi\|_{\dot W^{1,q'}(\R^3)} = 1$. From \eqref{eq:zero.charge.q}, we deduce
	 \begin{align}
	 	   \int_{\R^3} \nabla (Q_i w) \cdot \nabla \varphi = \int_{\R^3} \nabla (Q_i w) \cdot \nabla (Q_i \varphi).
	 \end{align}
	 Therefore, using \eqref{eq:bound.Q_i.q}
	 \begin{align}
	 	\int_{\R^3} \nabla (L w)  \cdot \nabla \varphi = \sum_i \int_{\R^3} \nabla (Q_i w) \cdot \nabla (Q_i \varphi)
	 	&\leq \sum_i \|Q_i w\|_{\dot W^{1,q}(\R^3)} \|Q_i \varphi\|_{\dot W^{1,q'}(\R^3)} \\
	 	&\leq C_q \sum_i \left(\frac{1}{q} \|e w\|^q_{L^q(B_i)} + \frac{1}{q'} \|e \varphi\|^{q'}_{L^{q'}(B_i)}\right) \\
	 	&\leq C_q \left( \frac{1}{q} \|e w\|^q_{L^q(\cup_i B_i)} + \frac{1}{q'} \|\varphi\|^{q'}_{\dot W^{1,q'}(\R^3))} \right)
	 	& \leq C_q,
	 \end{align}
	 where $C_q$ depends only on $q$.
	 
	 This establishes \eqref{eq:bound.L.q} since for all $\psi \in W^{1,q}_\sigma(\R^3)$
	 \begin{align}
	 	\|\psi\|_{\dot W^{1,q}(\R^3)} \leq C_q
	 	\sup_{\substack{ \varphi \in \dot W_\sigma^{1,q'}(\R^3) \\ \|\varphi\|_{\dot W^{1,q'}(\R^3)} = 1}} \int_{\R^3} \nabla \varphi \cdot \nabla \psi.
	 \end{align}
	 This finishes the proof.
\end{proof}

\begin{proof}[Proof of Lemma \ref{lem:decay.q}]
	The estimate 
	\begin{align} \label{eq:est.simple.dipole.q}
		\|Q^d_i w\|_{\dot W^{1,q}(\R^3)} \leq C \| e w\|_{L^q(B_i)}
	\end{align}
	follows directly from the explicit form of $Q^d_i$. Then, \eqref{eq:est.dipole.quadrupole.q} follows from \eqref{eq:bound.Q_i.q} and \eqref{eq:est.simple.dipole.q}.
	The decay estimates \eqref{eq:dipoleEstimate.q} and \eqref{eq:quadrupoleEstimate.q} are proved in the same way as in Lemma \ref{lem:decay.dipole.quadrupole}.
\end{proof}

\subsection{Proof of Proposition \ref{co:decomposition.W_q}}

\begin{proof}[Proof of Proposition \ref{co:decomposition.W_q}]
We begin by proving the existence and uniqueness of solutions to \eqref{eq:weak.from}.
	Assume $q > 2$ (the case $q<2$ is treated analogously).
	Let $r >q$ and $f  \in \dot W^{-1,r}(\R^3) \cap \dot H^{-1}(\R^3)$. As before, we denote by $u \in \dot W^{1,r}(\R^3) \cap \dot H^{1}(\R^3)$ 
	the unique weak solution to \eqref{eq:defU}.
	Employing the method of reflections, we define $v_k := (1-L)^k u$. By Corollary \ref{co:L^2.rate.phi} $v_k \to v \in \dot H^1(\R^3)$ and $v$ solves \eqref{eq:weak.from}.
	Moreover, by Corollary \ref{co:L^2.rate.phi} and Proposition \ref{pro:L.bounded.L^q},
	we have
	\begin{align}
		\|v_k - v_{k+1}\|_{\dot H^1(\R^3)} &= \|L(1-L)^k u\|_{\dot H^1(\R^3)} \leq (C \phi_0)^k \|u\|_{\dot H^1(\R^3)}, \\
		\|v_k - v_{k+1}\|_{\dot W^{1,r}(\R^3)} &= \|L(1-L)^k u\|_{\dot W^{1,r}(\R^3)} \leq C_r^{k+1} \|u\|_{\dot W^{1,r}(\R^3)}.
	\end{align}
	Thus, by interpolation (Riesz-Thorin)
	\begin{align}
	 	\|L(1-L)^k\|_{\dot W^{1,q}(\R^3) \to \dot W^{1,q}(\R^3)} \leq C_r^{(k+1) \lambda} (C \phi_0)^{k (1-\lambda)},
	\end{align}
	where $\lambda$ satisfies $1/q = \lambda/r + (1-\lambda)/2$. Choosing $r = 2 q$, we thus find
	\begin{align}
		\|L(1-L)^k\|_{\dot W^{1,q}(\R^3)\to \dot W^{1,q}(\R^3)} \leq C_q (C_q \phi_0^{\lambda_q})^{k}.
	\end{align}
	In particular, for $\phi_0$  small enough (depending only on $q$) $v_k$ is a Cauchy sequence in $\dot W^{1,q}(\R^3)$,
	implying $v_k \to v$ in $\dot W^{1,q}(\R^3)$ and 
	\begin{align}
		\| v\|_{\dot W^{1,q}(\R^3)} \leq C_q \|u\|_{\dot W^{1,q}(\R^3)}\leq C_q  \|f\|_{\dot W^{-1,q}(\R^3)}.
	\end{align}
	By density, the same result holds for any $f \in \dot W^{-1,q}(\R^3)$.
	
	It remains to prove uniqueness. By linearity it is enough to show that if $v \in  \dot W^{1,q}(\R^3)$ satisfies \eqref{eq:weak.from}
	with $f = 0$, 
	then $v=0$.
	Let $g \in \dot W^{-1,q'}(\R^3)$ and let $w \in \dot W^{1,q'}(\R^3)$ be a solution to \eqref{eq:weak.from} with right-hand side $g$.
	Then, 
	\begin{align}
		\langle g, v\rangle = \int_{\R^3} \nabla w :\nabla v = \langle 0, w \rangle = 0.
	\end{align}
	Since $g \in \dot W^{-1,q}(\R^3)$ was arbitrary, this implies $v = 0$.

\medskip

	We now turn to the decomposition $\dot W_\sigma^{1,q}(\R^3) = W_q \oplus V_q$.
	Let $w \in \dot W_\sigma^{1,q}(\R^3)$. Then  $w = w_1+ w_2$ with $w_1 \in W_q$ and $w_2 \in V_q$ if and only if $w_1$ solves \eqref{eq:weak.from} for $f = - \Delta w$. 
	As we have just proved, this problem has a unique solution, which establishes $\dot W^{1,q}(\R^3) = V_q \oplus W_q$ as well as the
	estimate \eqref{eq:est.W_q}. In particular, this shows the assertion $v = P_{W_q} u$ from the proposition.
	
	 It remains to prove \eqref{eq:est.V_q}. To this end, we observe that for the decomposition $w = w_1 +w_2$, the function $w_2$
	only depends on $e w|_{\cup_i B_i}$. In particular,
	\begin{align}
		\| w_2\|_{\dot W^{1,q}(\R^3)} &\leq C_q  \| \tilde w\|_{\dot W^{1,q}(\R^3)}
	\end{align}
	for any function $\tilde w \in \dot W_\sigma^{1,q}(\R^3)$ such that $e \tilde w |_{\cup_i B_i} =  ew |_{\cup_i B_i}$.
	By Lemma \ref{rem:ext.est}, such a function $\tilde w$ exists with
	\begin{align}
		 \| \tilde w\|_{\dot W^{1,q}(\R^3)} \leq C_q\|e w\|_{L^q(\cup_i B_i)}.
	\end{align}
	This finishes the proof.
\end{proof}

\begin{rem}
 It is possible to rely on Theorem \ref{th:conv.relaxed} instead of Corollary \ref{co:L^2.rate.phi} for the interpolation in the method of reflections in the proof above 
	to show the result of Proposition \ref{co:decomposition.W_q} for $p$ close to $2$ without the smallness condition on $\phi_0$. 
	
	More precisely, the following holds. For all $\theta > 2$ there exists $\delta > 0$ such that
	for all $q \in (2-\delta, 2 + \delta)$ and all $w \in \dot W^{1,q}_\sigma(\R^3)$ there exists a unique
	decomposition $w = w_1 + w_2$, with $w_1 \in W_q$ and $w_2 \in V_q$. Moreover, $w_1$ and $w_2$ satisfy \eqref{eq:est.W_q} and \eqref{eq:est.V_q}, respectively, with a constant $C$ which depends only on $\theta$.  
\end{rem}

\section{Proof of Corollary \ref{cor.average}}

\label{sec:proofCorollary}

\begin{proof}[Proof of Corollary \ref{cor.average}]
	We claim that it suffices to prove that for all $w \in \dot W^{1,q}_\sigma(\R^3)$
	\begin{align} \label{eq:cor.claim.iteration}
		\|(1-L) w - w \|_{L^\infty(\R^3)} \leq C (R_{\max}^\alpha + \lambda_{q'}^{1/q'}) \|e w\|_{L^q(\cup_i B_i)}.
	\end{align}
	Indeed, assume that \eqref{eq:cor.claim.iteration} holds. Then, by Theorem \ref{th:L^q.rate.phi},
	\begin{align}
		 \|(1-L)^k u - v\|_{L^\infty(\R^3)} 
		&\leq \sum_{n=k}^\infty  \|(1-L)^n u - (1-L)^{n+1} u\|_{L^\infty(\R^3)}   \\
		&\leq C (R_{\max}^\alpha + \lambda_{q'}^{1/q'}) \sum_{n=k}^\infty \|e (1-L)^n u \|_{L^q(\cup_i B_i)}\\
		&\leq C (R_{\max}^\alpha + \lambda_{q'}^{1/q'}) \|e u \|_{L^q(\cup_i B_i)} \sum_{n=k}^\infty (C \phi_0)^n \\
		& \leq C (R_{\max}^\alpha + \lambda_{q'}^{1/q'}) (C \phi_0)^{k}  \|e u \|_{L^q(\cup_i B_i)}
	\end{align}
	provided $\phi_0$ is sufficiently small.
	
	\medskip
	
	It remains to prove \eqref{eq:cor.claim.iteration}. 
	Let $x \in \R^3$ and let $i_x \in I$ be the minimizer of $|X_i - x|$. (We can disregard the set, where the minimizer is not unique because it is a nullset.) It is easy to generalize the proof of Lemma \ref{lem:decay.q} for an arbitrary particle radius $R_j>0$ to see that
	for all $j \neq i_x$
	\begin{align} \label{eq:far.balls}
		|Q_j w (x)| \leq \frac{C R_{j}^{3/ q'}}{|X_i - X_j|^2} \|e w \|_{L^q( B_j)},
	\end{align}
	Moreover, by the maximum modulus estimate for the Stokes equations, (see \cite[Theorem 6.1]{MaremontiRussoStarita99})
	\begin{align}
		|Q_{i_x}w(x)| \leq C \|Q_{i_x} w \|_{L^\infty(B_{i_x})} \leq C R_{\max}^{\alpha} [Q_{i_x}w]_{C^{0,\alpha}(B_{i_x})}.
	\end{align}
	where we used that $\int_{\partial B_{i_x}} Q_{i_x} w = 0$. From Sobolev embedding it follows
	\begin{align} \label{eq:close.ball}
		|Q_{i_x}w(x)| \leq C R_{\max}^\alpha \|Q_{i_x}w\|_{W^{1,q}(B_{i_x})} \leq C R_{\max}^\alpha \|e w\|_{L^q(B_{i_x})},
	\end{align}
	where we used the Korn-Poincar\'e inequality in the last estimate in the same way as in \eqref{eq:bound.Q_i.q.1}.
	
	Combining \eqref{eq:far.balls} and \eqref{eq:close.ball} yields
	\begin{align}
		|(1-L) w - w)(x)| &\leq |Q_{i_x} w (x)| + \sum_{j \neq i_x} |Q_j w(x)| \\
		&\leq C  R_{\max}^\alpha \|e w\|_{L^q(B_{i_x})} +  C \sum_{j \neq i} \frac{R_{j}^{3 /q'}}{|X_i - X_j|^2} \|e w\|_{L^q(B_j)} \\
		& \leq C  R_{\max}^\alpha \|e w\|_{L^q(B_{i_x})} + C \left(\sum_{j \neq i} \frac{R_j^3}{|X_i - X_j|^{2 q'} } \right)^{1/q'} \left( \sum_{j} \|e w\|_{L^q(B_j)}^q \right)^{1/q} \\
		& \leq C \left(R_{\max}^\alpha + \lambda_{q'}^{1/q'} \right) \|e w\|_{L^q(\cup_j B_j)}
	\end{align}
	for $2 q' < 3$, i.e. $q > 3$, where we used \eqref{eq:summability}.
	This finishes the proof of \eqref{eq:L^infty.convergence}

Estimate \eqref{eq:convergence.average} is proven analogously. In this case, the term $R_{\max}^\alpha$ is not needed, because $\fint_{\partial B_i} Q_i w = 0$.
\end{proof}

\section*{Acknowledgement}

The author wants to thank Richard Schubert for helpful discussions.
The author has been supported by the Deutsche Forschungsgemeinschaft (DFG, German Research Foundation) 
through the collaborative research center ``The Mathematics of Emerging Effects'' (CRC 1060, Projekt-ID 211504053) 
and the Hausdorff Center for Mathematics (GZ 2047/1, Projekt-ID 390685813).

\section*{Conflict of interest}

The author declares no conflict of interest.

\printbibliography

\end{document}